\documentclass[11pt]{article}
\usepackage{amsmath, latexsym, amsfonts, amssymb, amsthm, amscd}
\usepackage{mathtools} 
\usepackage[left=2cm, right=2cm, top=2.5cm, bottom=2.5cm]{geometry}
\usepackage{upquote} 
\usepackage{color}
\usepackage{setspace} 
\usepackage[labelfont=bf]{caption} 
\usepackage{stmaryrd} 
\usepackage{multicol} 
\usepackage[dvipsnames]{xcolor}
\usepackage{comment}
\usepackage[title]{appendix}
 
\usepackage[utf8]{inputenc}
\usepackage[T1]{fontenc}
\usepackage{kpfonts} 

\usepackage{dsfont} 

\usepackage[english]{babel} 

\usepackage{bm} 

\usepackage{enumitem} 

\usepackage{xcolor}
\definecolor{Maroon}{HTML}{ad2231}
\definecolor{webgreen}{HTML}{008000}
\usepackage{dsfont}
\usepackage{hyperref}
\hypersetup{colorlinks, breaklinks, urlcolor=Maroon, linkcolor=Maroon, citecolor=webgreen} 

\allowdisplaybreaks



\newtheorem{theorem}{Theorem}[section]
\newtheorem{corollary}[theorem]{Corollary}
\newtheorem{proposition}[theorem]{Proposition}
\newtheorem{lemma}[theorem]{Lemma}
\newtheorem{remark}[theorem]{Remark}
\newtheorem{definition}[theorem]{Definition}

\theoremstyle{definition}

\usepackage{todonotes}

\usepackage{graphicx}
\newcommand{\cev}[1]{\reflectbox{\ensuremath{\vec{\reflectbox{\ensuremath{#1}}}}}}

\usepackage{chngcntr}
\usepackage{apptools}
\AtAppendix{\counterwithin{lemma}{section}}

\numberwithin{equation}{section} 

\begin{document}
\title{On the excision of Brownian bridge paths}
\author{Gabriel Berzunza Ojeda\footnote{ {\sc Department of Mathematical Sciences, University of Liverpool. Liverpool, United Kingdom.} E-mail: gabriel.berzunza-ojeda@liverpool.ac.uk}\,\, 
and
Ju-Yi Yen\footnote{{\sc Department of Mathematical Sciences, University of Cincinnati, Cincinnati, Ohio, United States.} E-mail: ju-yi.yen@uc.edu} \\ \vspace*{10mm}
}
\date{ }
\maketitle



\begin{abstract} 
Path transformations are fundamental to the study of Brownian motion and related stochastic processes, offering elegant constructions of the Brownian bridge, meander, and excursion. Central to this theory is the well-established link between Brownian motion and the $3$-dimensional Bessel process ${\rm BES}(3)$. This paper is specifically motivated by Pitman and Yor (2003), who showed that a ${\rm BES}(3)$ process can be constructed by excising the excursions of a Brownian path below its past maximum that reach zero and concatenating the remaining excursions. Our main result shows that a similar excision procedure, when applied to a Brownian bridge, can be related to a $3$-dimensional Bessel bridge. 
\end{abstract}


\noindent {\sc Key words and phrases}: Brownian bridge, $3$-dimensional Bessel process, path transformations, excursion theory, normalized Brownian excursion, excision procedure.

\noindent {\sc MSC 2020 Subject Classifications}: 60J65; 60G17.

\section{Introduction}

Path transformations have a rich history in the study of Brownian motion and related processes. They provide elegant constructions of objects such as the Brownian bridge, meander, first-passage bridge, and excursion, while illuminating the deep connections between them (see, e.g., \cite{Vervaat1979, Bertoin1994, Bertoin2003}). In particular, a cornerstone of this theory is the relationship between Brownian motion and the $3$-dimensional Bessel process ${\rm BES}(3)$; see, e.g., \cite{Williams1970, Getoor1979, Biane1987, Pitman1996} and references therein. This work is particularly motivated by the construction of Pitman and Yor \cite[Theorem 15]{Pitman2003}, who showed that a ${\rm BES}(3)$ process can be obtained from a Brownian path by excising its excursions below the past maximum process that reach zero and concatenating the remaining excursions. This raises the question of whether a similar excision procedure applied to a Brownian bridge results can be related to $3$-dimensional Bessel bridge (or, equivalently, a normalized Brownian excursion). This work provides a detailed analysis of this relationship, establishing the formal path transformation and its distributional implications.

To formalize this, let $\mathbf{C}([0,\infty), \mathbb{R})$ be the space of all continuous functions from $[0,\infty)$ to $\mathbb{R}$, equipped with the topology of uniform convergence on compact subsets of $[0,\infty)$. Throughout the paper, we work on the canonical Wiener space $\Omega \coloneqq \mathbf{C}([0,\infty), \mathbb{R})$, and $\mathcal{F}$ stands for its Borel $\sigma$-algebra. We denote by $(B(t), t \geq 0)$ the canonical process on $\Omega$; that is, $B(t)(\omega) = \omega(t)$ for $\omega \in \Omega$. Let $\mathbb{P}$ be the Wiener measure, under which the coordinate process is a standard {\sl Brownian motion} started at $0$; we refer to \cite[\S 3 in Chapter I]{Yor1999} for background. All processes considered in this paper (e.g., Brownian bridge, Brownian meander, Brownian excursion, and first-passage bridge) are defined as measurable functionals of the same coordinate process $B$. Whenever a transformation is only meaningful on a subset of paths of full $\mathbb{P}$-measure, we extend it arbitrarily outside this set (for instance, by setting it equal to the zero function). Since these exceptional sets are Borel and negligible, all such transformations become Borel maps on the full path space, and all identities  are understood as holding $\mathbb{P}$-almost surely.

Let $(B^{\rm br}(t),  t \in [0,1])$ be a {\sl Brownian bridge}, i.e.,
$B^{\rm br}(t) = B(t)-tB(1)$, for $t \in [0,1]$. Let $\mu$ be the (a.s.\ unique) time at which $B^{\rm br}$ attains its maximum on $[0,1]$ (see, e.g., \cite[Lemma 0]{Vervaat1979}). Define the process $(M^{\rm br}(t),  t \in [0,1])$ by letting
\begin{align}
M^{\rm br}(t)  \coloneqq \begin{cases}
\max_{0 \leq s \leq t} B^{\rm br}(s), & \quad \text{for} \quad t \in [0, \mu], \\
\max_{t \leq s \leq 1} B^{\rm br}(s),  & \quad \text{for} \quad t \in [\mu, 1].
\end{cases}
\end{align}
\noindent Consider the excursions of $B^{\rm br}$ below the process $M^{\rm br}$, and excise the excursions that reach level $0$. Then, close the gaps by joining the remaining excursions together; see figure \ref{fig:excise-bridge}. Denote the resulting process by $(Y^{\rm br}(t), t \in [0,\tau^{\rm br}])$, where $\tau^{\rm br}$ is the total length of all non-excised excursions of $B^{\rm br}$ below $M^{\rm br}$ (i.e., those that do not reach level $0$). More precisely, for $t \in [0,1]$, define 
\begin{align}
G_{t}^{\rm br} \coloneqq \sup \{ s \in [0,t]: M^{\rm br}(s) - B^{\rm br}(s) = 0\} \quad \text{and} \quad  D_{t}^{\rm br} \coloneqq \inf \{ s \in (t, 1]: M^{\rm br}(s)-B^{\rm br}(s) = 0\},
\end{align}
\noindent with the convention $D_{1}^{\rm br} \coloneqq 1$. For $t \in [0,1]$, let
\begin{align}
R^{\rm br}(t) \coloneqq \max_{G_{t}^{\rm br} \leq s \leq D_{t}^{\rm br}} (M^{\rm br}(s) - B^{\rm br}(s)), \quad
U^{\rm br}(s) \coloneqq \int_{0}^{t} \mathbf{1}_{\{R^{\rm br}(s) < M^{\rm br}(s) \}} {\rm d} s, 
\end{align}
\noindent and for $s \in [0, U^{\rm br}(1)]$, $\alpha^{\rm br}(s) \coloneqq \inf\{t \in [0,1]: U^{\rm br}(t) > s \}$, with the convention $\alpha^{\rm br}(U^{\rm br}(1))\coloneqq 1$. Then, $\tau^{\rm br} \coloneqq U^{\rm br}(1)$ and 
\begin{align}
Y^{\rm br}(t) \coloneqq B^{\rm br}(\alpha^{\rm br}(t)), \quad \text{for} \quad t \in [0, \tau^{\rm br}].
\end{align}
\noindent Since $\mu \in (0,1)$ $\mathbb{P}$-almost surely, we have that $\tau^{\rm br} \in (0,1]$ $\mathbb{P}$-almost surely. Let $(\tilde{Y}^{\rm br}(t), t \in [0,1])$ be the process obtained by Brownian scaling $Y^{\rm br}$ to the time interval  $[0,1]$, that is,
\begin{align} \label{eq24}
\tilde{Y}^{\rm br}(t) \coloneqq (\tau^{\rm br})^{-1/2} Y^{\rm br}(\tau^{\rm br}t), \quad \text{for} \quad t \in [0,1]. 
\end{align}
\noindent For the well-definedness of $\tilde{Y}^{\rm br}$ and $\tau^{\rm br}$, we refer to Section \ref{Sec:Excisingbridges}, specifically Proposition \ref{Pro2} and Corollary \ref{corollary5}.
\begin{figure}[t]
  \centering
  \includegraphics[width=0.82\linewidth]{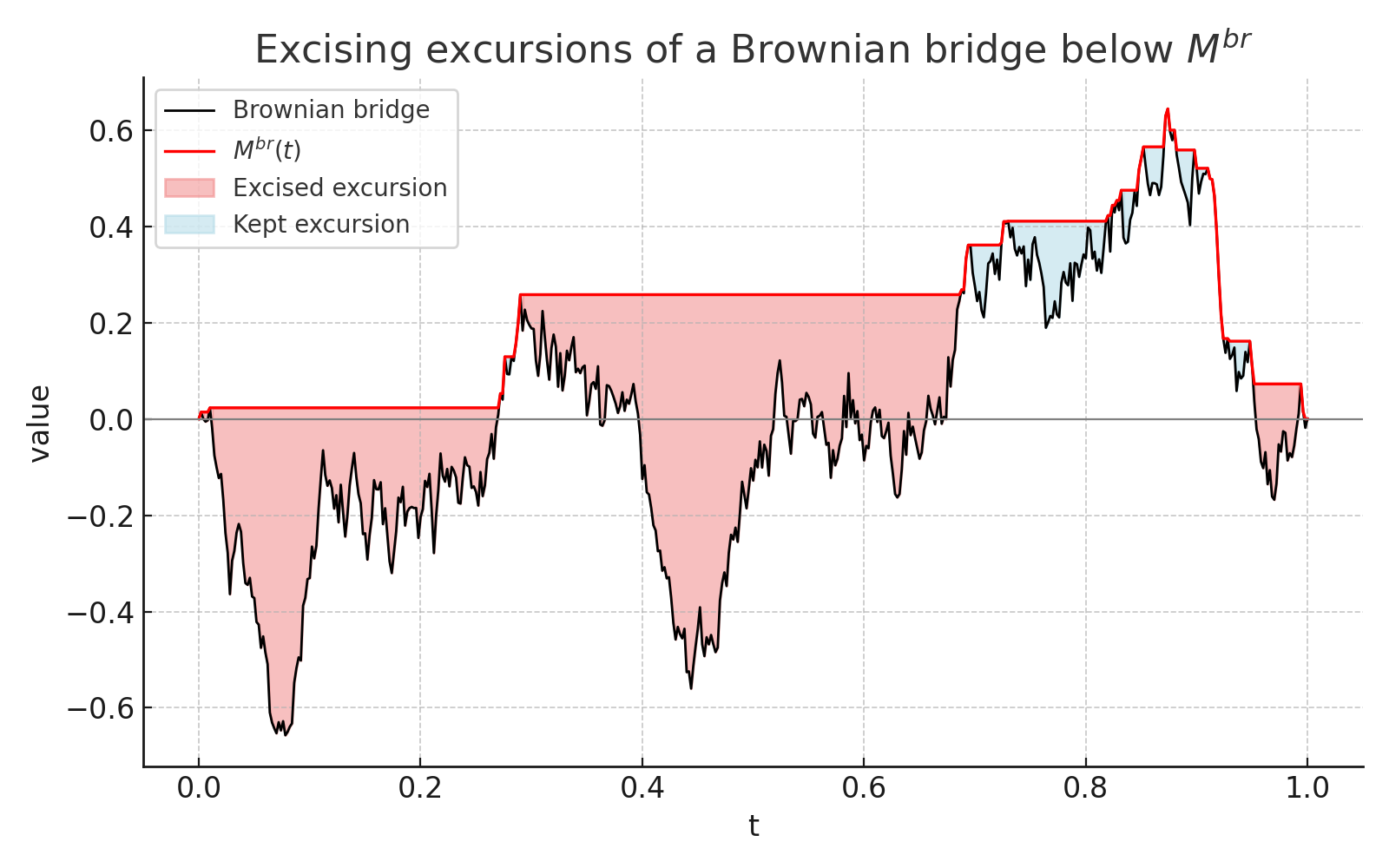}
  \caption{Excising excursions of a Brownian bridge below $M^{\mathrm{br}}(t)$.
  Excursions that reach level $0$ (red shading) are excised, while those that remain above $0$ (blue shading) are kept and concatenated.}
  \label{fig:excise-bridge}
\end{figure}

Our main result characterizes the joint law of $\tilde{Y}^{\rm br}$ and $\tau^{\rm br}$. Before we state our main result, we need to introduce some notation. For $x >0$, define
\begin{align} \label{eq36}
\phi_{x}(0) \coloneqq 0, \quad \text{and} \quad \phi_{x}(t) \coloneqq 2\int_{-\infty}^{1-(x/2t)^{2}} g_{x}(s) g_{x}(1-(x/2t)^{2}-s) {\rm d} s, \quad \text{for} \quad t >0,
\end{align}
\noindent where
\begin{align}  \label{eq15}
g_{x}(s) \coloneqq \frac{1}{x\sqrt{2\pi s}}(1-e^{-\frac{x^{2}}{2s}}) \mathbf{1}_{\{s\in (0,\infty)\}}.
\end{align}
\noindent As noted in Remark \ref{remark2} below, $g_{x}$ is a probability density function on $(0, \infty)$. Let $(B^{\rm exc}(t), t \in [0,1])$ be the {\sl (normalized) Brownian excursion} and $\bar{B}^{\rm exc}  \coloneqq \max_{0 \leq s \leq 1} B^{\rm exc}(s)$. Recall that $B^{\rm exc}$ can be constructed from $B^{\rm br}$ which is itself a function of $(B(t), t \geq 0)$; see, e.g., \cite{Vervaat1979} or \cite{Bertoin1994}. Recall also that $B^{\rm exc}$ has the same law as a $3$-dimensional Bessel bridge; see, e.g., \cite[Theorem 4.2 in Chapter XII]{Yor1999}. 

\begin{theorem} \label{Theorem1}
For all positive or bounded measurable function $G: \mathbf{C}([0,1], \mathbb{R}) \rightarrow \mathbb{R}$, we have that
\begin{align} \label{eq37}
\mathbb{E}[ G(\tilde{Y}^{\rm br}) (B^{\rm br}(\mu))^{-1}(\tau^{\rm br})^{1/2}] = \int_{0}^{\infty}  \mathbb{E}[G(B^{\rm exc})\phi_{x}(\bar{B}^{\rm exc})] {\rm d} x.
\end{align}
\end{theorem}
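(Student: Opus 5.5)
The plan is to reduce the bridge to two independent first-passage pieces glued at the maximum, apply the Pitman--Yor excision \cite[Theorem 15]{Pitman2003} to each piece, and then recombine using Williams' decomposition of the Itô excursion measure at its maximum.

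\emph{Step 1 (splitting at the maximum).} I would decompose $B^{\rm br}$ at $\mu$, in the spirit of Denisov's decomposition. Conditionally on $\mu=u$ and $B^{\rm br}(\mu)=m$:
\begin{itemize}[label={}, leftmargin=0pt]
\item the pre-maximum path $(B^{\rm br}(s), s\le u)$ is a first-passage bridge from $0$ to level $m$ of length $u$;
\item the time-reversed post-maximum path $(B^{\rm br}(1-s), s\le 1-u)$ is an independent first-passage bridge to $m$ of length $1-u$.
\end{itemize}
In both pieces $M^{\rm br}$ becomes the running maximum. Hence the excursions of $B^{\rm br}$ below $M^{\rm br}$ are exactly the excursions below the past maximum of these two pieces, and the excision acts on each piece separately.

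To avoid conditioning on length, I would instead work with a Brownian motion killed at $T_m=\inf\{t:B(t)=m\}$. I then disintegrate the law of $T_m$, with density $m(2\pi t^3)^{-1/2}e^{-m^2/2t}$, and express the joint law of $(\mu,m)$ and the two pieces as a product of two such first-passage laws, restricted by $u+(1-u)=1$. This turns the bridge into an integral over $m$ and over the two passage times, with an explicit density.

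\emph{Step 2 (excision of one piece).} For a Brownian motion run until $T_m$, I need the joint law of the concatenated kept path and the total excised time $T_m-\tau_m$, where $\tau_m$ is the kept time. By the Pitman--Yor construction, the kept path is a ${\rm BES}(3)$ process run until it first hits $m$. I expect the excised time to be conditionally independent of the kept path given $m$. The reason is that, by excursion theory, the excised excursions form a Poisson point process indexed by the level of the maximum, independent of the kept excursions.

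I would compute the law of the excised time via its Laplace transform, from the Itô measure of excursions of height at least the current level. The aim is to show that it is expressible through $g_m$; in particular I expect $g_m$ in \eqref{eq15} to arise as the density of a length (after the scaling below) attached to one piece. Remark \ref{remark2} says $g_x$ is a probability density, which is consistent with this.

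\emph{Step 3 (gluing and rescaling).} Gluing a ${\rm BES}(3)$ path run until it hits $m$ with an independent time-reversed copy gives, by Williams' decomposition, the Itô excursion measure conditioned on maximum $m$. Disintegrating further in the total length $\tau$ and Brownian-scaling to length $1$ then produces $B^{\rm exc}$ with maximum $\bar B^{\rm exc}=m\tau^{-1/2}$. The two excised times from the two pieces are independent, so their sum has a convolution density, which accounts for the factor $2\int g_x(s)\,g_x(\cdot-s)\,{\rm d}s$ in $\phi_x$.

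The constraint that kept plus excised time equals $1$ fixes the excised total at $1-\tau$. The change of variables from $(m,\tau)$ to $(x,\bar B^{\rm exc})$ should turn $1-\tau$ into $1-(x/2t)^2$ with $t=\bar B^{\rm exc}$. The weight $(B^{\rm br}(\mu))^{-1}(\tau^{\rm br})^{1/2}$ in \eqref{eq37} is presumably exactly what cancels the Jacobian and the excursion-length density, leaving ${\rm d}x$.

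\emph{Main obstacle.} I expect the main obstacle to be Step 2 together with the final bookkeeping: identifying precisely the joint law of the kept path and the excised time for Brownian motion stopped at $T_m$, and matching constants so that \eqref{eq36} appears exactly. I would first verify the identity for functionals $G$ depending only on the maximum, by Laplace transforms in the total time. I would then extend it to general $G$ using the conditional independence from Step 2 and a monotone class argument, relying on Proposition \ref{Pro2} and Corollary \ref{corollary5} for the measurability of $\tilde Y^{\rm br}$ and $\tau^{\rm br}$.
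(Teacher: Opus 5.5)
Your argument is correct, and from Step 2 onward it is the paper's argument. The paper uses the same ingredients:
\begin{itemize}
\item Pitman--Yor to identify the kept path as ${\rm BES}(3)$ pieces (Lemma~\ref{lemma1});
\item Poisson thinning for the independence of the excised time from the kept path;
\item the splitting $T=\tau+\tau^{\rm e}$ to get the Laplace transform and the density $g_x*g_x$ (Lemma~\ref{lemma2});
\item disintegration on total length one, as in \eqref{eq21};
\item the Williams-decomposition agreement formula (Lemma~\ref{lemma4}), which is exactly your ``disintegrate in $\tau$ and rescale'' step.
\end{itemize}

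The genuine difference is the entry point. You split $B^{\rm br}$ at $\mu$ into two independent first-passage bridges given $(\mu,B^{\rm br}(\mu))=(u,m)$, whose joint density is $2\sqrt{2\pi}\,f_{T_m}(u)f_{T_m}(1-u)$. The paper instead glues the two pieces via $\mathcal{T}^{\rm me}$ into $\overleftarrow{B}^{\rm me}$: the pre-maximum piece, followed by $m$ plus the reversed post-maximum piece. It then conditions on $\overleftarrow{B}^{\rm me}(1)=x=2m$, which gives a single first-passage bridge $F^{\rm br}_x$ (via \eqref{eq1} and Lemma~\ref{Lemma2}). So it works with one Brownian path up to $T_x$, with the shifted rule \ref{rule2} after $T_{x/2}$.

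By the strong Markov property at $T_{x/2}$ and $T_m+T'_m\stackrel{d}{=}T_{2m}$, the two set-ups are equivalent. Yours is more symmetric, since both pieces excise at threshold $0$, and it bypasses the meander and first-passage-bridge identities. The cost is that you must justify the bridge's decomposition at its maximum, including the joint law of $(\mu,B^{\rm br}(\mu))$. The paper's route reuses these standard transformations instead.

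Two bookkeeping remarks.
\begin{itemize}
\item A single piece run to level $m$ has excised time with Laplace transform $(1-e^{-2m\sqrt{2\lambda}})/(2m\sqrt{2\lambda})$. Its density is therefore $g_{2m}$, not $g_m$, which is why $x=2B^{\rm br}(\mu)$ is the natural parameter.
\item The constants then close directly. The agreement formula gives $\mathbf{E}[G(\tilde W^{\rm br})\tau^{1/2}h(m\tau^{-1/2})]=\frac{2m}{\sqrt{2\pi}}\mathbb{E}[G(B^{\rm exc})h(\bar B^{\rm exc})]$. Combined with the weight $m^{-1}$ and the factor $2\sqrt{2\pi}$, this yields $4$ per ${\rm d}m$, i.e.\ $2$ per ${\rm d}x$. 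That is exactly the factor $2$ in $\phi_x$, evaluated at $1-(x/2\bar B^{\rm exc})^2$.
\end{itemize}
Since the agreement formula already handles arbitrary $G$, your preliminary Laplace-transform check for functionals of the maximum and the monotone-class extension are unnecessary.
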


As a consequence of Theorem \ref{Theorem1}, we have the following corollary. 

\begin{corollary} \label{corollary2}
For all positive or bounded measurable function $G: \mathbb{R} \rightarrow \mathbb{R}$, we have that
\begin{align}
\mathbb{E}[ G(B^{\rm br}(\mu) (\tau^{\rm br})^{-1/2}) (B^{\rm br}(\mu))^{-1}(\tau^{\rm br})^{1/2}] = \int_{0}^{\infty}  \mathbb{E}[G(\bar{B}^{\rm exc})\phi_{x}(\bar{B}^{\rm exc})] {\rm d} x.
\end{align}
\end{corollary}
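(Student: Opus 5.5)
The corollary follows from Theorem \ref{Theorem1} by choosing a test function $G$ that depends only on the maximum of the path. For a positive or bounded measurable $G:\mathbb{R}\to\mathbb{R}$, define $\hat G:\mathbf{C}([0,1],\mathbb{R})\to\mathbb{R}$ by
\begin{align*}
\hat G(w) \coloneqq G\Big(\max_{0\le s\le 1} w(s)\Big).
\end{align*}
The map $w\mapsto \max_{0\le s\le 1} w(s)$ is continuous for the uniform topology. Hence $\hat G$ is Borel measurable, and it is positive or bounded whenever $G$ is. So Theorem \ref{Theorem1} applies to $\hat G$.

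\emph{Right-hand side.} Here $\hat G(B^{\rm exc}) = G(\bar{B}^{\rm exc})$ by the definition of $\bar B^{\rm exc}$. The right-hand side of \eqref{eq37} therefore becomes $\int_0^\infty \mathbb{E}[G(\bar B^{\rm exc})\phi_x(\bar B^{\rm exc})]\,{\rm d}x$, which is the desired expression.

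\emph{Left-hand side.} We must identify $\max_{0\le t\le 1}\tilde Y^{\rm br}(t)$. By the scaling \eqref{eq24}, this equals $(\tau^{\rm br})^{-1/2}\max_{0\le t\le \tau^{\rm br}} Y^{\rm br}(t)$. It then suffices to show that
\begin{align*}
\max_{0\le t\le\tau^{\rm br}} Y^{\rm br}(t) = B^{\rm br}(\mu) \quad \mathbb{P}\text{-a.s.}
\end{align*}

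\emph{Upper bound.} $Y^{\rm br}$ consists of values of $B^{\rm br}$ along $\alpha^{\rm br}$, so it never exceeds $B^{\rm br}(\mu)$.

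\emph{Lower bound.} This is the main point to check. Near $\mu$ the running maximum $M^{\rm br}$ is close to $B^{\rm br}(\mu)>0$. Excursions of $B^{\rm br}$ below $M^{\rm br}$ that start near $\mu$ are short and have small depth, so they do not reach $0$ and are kept.

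Consequently, for every $\varepsilon>0$ the set $\{s:R^{\rm br}(s)<M^{\rm br}(s)\}\cap(\mu-\varepsilon,\mu+\varepsilon)$ has positive Lebesgue measure. Hence $\mu$ is a point of increase of $U^{\rm br}$. Letting $u=U^{\rm br}(\mu)$, right-continuity properties of the inverse give $\alpha^{\rm br}(u)=\mu$, or at least $\alpha^{\rm br}(u\pm)$ arbitrarily close to $\mu$.

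Continuity of $B^{\rm br}$ then gives $Y^{\rm br}(u)=B^{\rm br}(\mu)$. Alternatively, the supremum of $Y^{\rm br}$ equals $B^{\rm br}(\mu)$, which suffices since $Y^{\rm br}$ is continuous on $[0,\tau^{\rm br}]$. I would cite Proposition \ref{Pro2} and Corollary \ref{corollary5} for the well-definedness and continuity of $Y^{\rm br}$.

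\emph{Conclusion.} We obtain $\hat G(\tilde Y^{\rm br}) = G(B^{\rm br}(\mu)(\tau^{\rm br})^{-1/2})$ almost surely. Substituting this into \eqref{eq37} gives the corollary.
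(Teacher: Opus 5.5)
Your proposal is correct and follows the paper's argument: apply Theorem \ref{Theorem1} to the test function $w\mapsto G(\max_{0\le s\le 1}w(s))$ and use the identity $\max_{0\le t\le 1}\tilde Y^{\rm br}(t)=B^{\rm br}(\mu)(\tau^{\rm br})^{-1/2}$ (cf.\ \eqref{eq35}). The paper only asserts this identity, while you justify it: every excursion straddling a time close to $\mu$ stays above $0$ and is kept, so $U^{\rm br}$ increases strictly to the right of $\mu$ and $\alpha^{\rm br}(U^{\rm br}(\mu))=\mu$ holds exactly, which makes your hedge about $\alpha^{\rm br}(u\pm)$ unnecessary.
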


\begin{proof}
The result follows from Theorem \ref{Theorem1} by noting that $\max_{0 \leq t \leq 1} \tilde{Y}_{x}^{\rm br}(t) = B^{\rm br}(\mu)(\tau^{\rm br})^{-1/2}$.
\end{proof}

The remainder of the paper is organized as follows. Section \ref{sec: path} provides preliminaries on Brownian bridges, first passage bridges, and meanders, alongside the path transformations central to our construction. Section \ref{Sec:DeterministicExcising} focuses on the deterministic setting, formalizing the excision and concatenation transformations for bridge- and time-reversed meander-type functions and establishing their key properties. In Section \ref{sec:excise excursion}, we describe a process similar to $Y^{\rm br}$ by excising excursions of a standard Brownian motion below its past maximum process, followed by several essential lemmas. Finally, Section \ref{sec:proof-main} combines these results to provide the proof of Theorem \ref{Theorem1}.

\section{Some known Brownian path transformations} \label{sec: path}

In this section, we present several known Brownian path transformations that relate the Brownian bridge, the meander, and the first passage bridge.

The {\sl Brownian meander} $(B^{\rm me}(t), t \in [0,1])$ can be constructed from a Brownian bridge $(B^{\rm br}(t),  t \in [0,1])$ by setting (see, e.g., \cite[Theorem 2.2]{Bertoin1994})
\begin{align} \label{eq2}
B^{\rm me}(t)  = \begin{cases}
-B^{\rm br}(t+\mu)+B^{\rm br}(\mu), & \quad \text{for} \quad t \in [0, 1-\mu], \\
-B^{\rm br}(1-t) + 2 B^{\rm br}(\mu), & \quad \text{for} \quad   t \in [1-\mu,1],
\end{cases}
\end{align}
\noindent where $\mu$ is the (a.s.\ unique) instant when $B^{\rm br}$ attains its maximum valued on $[0,1]$. 

Define also the so-called {\sl time-space reversal of the Brownian meander} $(\overleftarrow{B}^{\rm me}(t), t \in [0,1])$ by letting 
\begin{align}  \label{eq3}
\overleftarrow{B}^{\rm me}(t) \coloneqq B^{\rm me}(1)- B^{\rm me}(1-t), \quad \text{for} \quad t \in [0,1].
\end{align}
\noindent Note that, $1-\mu = \sup\{ s \in [0,1]: B^{\rm me}(s) = \frac{1}{2} B^{\rm me}(1) \}$. Then, by \eqref{eq2},
\begin{align} \label{eq4}
\overleftarrow{B}^{\rm me}(t)  = \begin{cases}
B^{\rm br}(t), & \quad \text{for} \quad t \in [0, \mu], \\
B^{\rm br}(\mu) + B^{\rm br}(1+\mu-t), & \quad \text{for} \quad t \in [\mu, 1].
\end{cases}
\end{align}

\noindent In particular, $B^{\rm br}$ can be recovered from $\overleftarrow{B}^{\rm me}$ via the formulae 
\begin{align} \label{eq5}
B^{\rm br}(t)  = \begin{cases}
\overleftarrow{B}^{\rm me}(t), & \quad \text{for} \quad t \in [0, \mu], \\
\overleftarrow{B}^{\rm me}(1+\mu-t) -\overleftarrow{B}^{\rm me}(\mu),  & \quad \text{for} \quad t \in [\mu, 1].
\end{cases}
\end{align}
\noindent Compare with \cite[Theorem 2.2]{Bertoin1994}. (Note that \cite{Bertoin1994} uses the minimum instead of the maximum of the Brownian bridge.)

Let $T_{x} \coloneqq \inf \{t \geq 0: B(t)=x \}$ be the first hitting time of $x \in \mathbb{R}$ of the Brownian motion $(B(t), t \geq 0)$. The {\sl Brownian first passage bridge} $(F^{\rm br}_{x}(t),  t \in [0,1])$ of length $1$ from $0$ to $x>0$ is given by 
\begin{align} \label{eq19}
(F^{\rm br}_{x}(t), t \in [0,1]) \stackrel{d}{=} (B(t), t \in [0,1] \mid T_{x}=1),
\end{align}
\noindent where $\stackrel{d}{=}$ denotes equality in distribution, referring here to distribution on the space $\mathbf{C}([0,1], \mathbb{R})$. It is well-known that $(F^{\rm br}_{x}(t),  t \in [0,1])$ can also be constructed from the Brownian bridge $B^{\rm br}$, which is itself a functional of the standard Brownian motion
(see \cite[Theorem 7]{Bertoin2003}). We shall henceforth work with this version of
$(F^{\rm br}_{x}(t),  t \in [0,1])$. Furthermore, \cite[Corollary 8]{Bertoin2003} allows one to recover $B^{\rm br}$ from  $F^{\rm br}_{x}$. 

On the other hand, as pointed out in the paragraph after \cite[Proposition 5]{Bertoin2003}, it is known that for all positive or bounded measurable function $G: \mathbf{C}([0,1], \mathbb{R}) \rightarrow \mathbb{R}$, we have that
\begin{align} \label{eq1}
\mathbb{E}[G(F^{\rm br}_{x})] = \mathbb{E}[ G(\overleftarrow{B}^{\rm me}) \mid \overleftarrow{B}^{\rm me}(1) = x]. 
\end{align}

In particular, the next result shows that the law of the time-space reversal of a Brownian meander is a ``mixture'' of laws of Brownian first passage bridges. 
\begin{lemma} \label{Lemma2}
For all positive or bounded measurable function $G: \mathbf{C}([0,1], \mathbb{R}) \rightarrow \mathbb{R}$, we have that
\begin{align}
\int_{0}^{\infty} \mathbb{E}[G(F^{\rm br}_{x})] x e^{-\frac{x^{2}}{2}} {\rm d} x= \mathbb{E}[ G(\overleftarrow{B}^{\rm me})]. 
\end{align}
\end{lemma}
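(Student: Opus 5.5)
The plan is to disintegrate the law of $\overleftarrow{B}^{\rm me}$ with respect to its terminal value $\overleftarrow{B}^{\rm me}(1)$ and then identify each conditional law through \eqref{eq1}. Two ingredients are needed: the law of $\overleftarrow{B}^{\rm me}(1)$, and \eqref{eq1} itself.

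\textbf{Step 1: the terminal value is Rayleigh distributed.} By \eqref{eq3}, $\overleftarrow{B}^{\rm me}(1) = B^{\rm me}(1) - B^{\rm me}(0) = B^{\rm me}(1)$. It is classical that $B^{\rm me}(1)$ has the Rayleigh density $x e^{-x^{2}/2}\mathbf{1}_{\{x>0\}}$. One way to see this is through \eqref{eq2}: it gives $B^{\rm me}(1) = 2B^{\rm br}(\mu) - B^{\rm br}(0) = 2\max_{0 \le t \le 1} B^{\rm br}(t)$, so the density follows from the classical law of the maximum of a Brownian bridge.

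Here I must be careful. The bridge identity reads
\begin{align*}
\mathbb{P}\Bigl(\max_{0 \le t \le 1} B^{\rm br}(t) > m\Bigr) = e^{-2m^{2}},
\end{align*}
so $2\max_{0 \le t\le 1} B^{\rm br}(t)$ has tail $e^{-x^{2}/2}$, which is exactly the Rayleigh law. This factor-two consistency check is the only delicate point. If it failed, the weight in the lemma would have to change, so it should be verified explicitly.

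\textbf{Step 2: disintegrate and apply \eqref{eq1}.} Disintegrate with respect to $\overleftarrow{B}^{\rm me}(1)$:
\begin{align*}
\mathbb{E}[G(\overleftarrow{B}^{\rm me})] = \int_{0}^{\infty} \mathbb{E}[ G(\overleftarrow{B}^{\rm me}) \mid \overleftarrow{B}^{\rm me}(1) = x]\, x e^{-x^{2}/2}\, {\rm d}x .
\end{align*}
Then substitute \eqref{eq1}, which identifies the conditional expectation with $\mathbb{E}[G(F^{\rm br}_{x})]$. This gives the lemma.

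\textbf{Measure-theoretic points.} For positive $G$ the identity holds in $[0,\infty]$. The bounded case follows by splitting $G = G^{+} - G^{-}$.

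The only subtlety is that \eqref{eq1} determines the regular conditional law only for Lebesgue-a.e.\ $x$. This is harmless inside the integral. Alternatively, one can use that the first-passage-bridge family $x\mapsto$ law of $F^{\rm br}_{x}$ is weakly continuous (by Brownian scaling, $F^{\rm br}_{x}$ is a continuous functional of $x$ and a fixed process), so the version given by \eqref{eq1} is a genuine disintegration. Under this alternative, the measurability of $x \mapsto \mathbb{E}[G(F^{\rm br}_{x})]$ is also automatic.
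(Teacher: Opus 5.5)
Your proof is correct and follows the paper's argument: identify $\overleftarrow{B}^{\rm me}(1)=B^{\rm me}(1)=2B^{\rm br}(\mu)$ as Rayleigh distributed (your factor-two check via $\mathbb{P}(\max_{0\le t\le 1}B^{\rm br}(t)>m)=e^{-2m^{2}}$ is right), then disintegrate with respect to the terminal value and substitute \eqref{eq1}. One minor point: your optional weak-continuity justification ``by Brownian scaling'' does not work as stated, because scaling sends a length-$1$ first-passage bridge to level $x$ onto one to level $cx$ of length $c^{2}$, so it never relates different $x$ at fixed length $1$; you do not need it, since the a.e.-$x$ argument already suffices.
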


\begin{proof}
It is well-known (see, e.g., \cite{Bertoin1994})  that $B^{\rm me}(1) = 2 B^{\rm br}(\mu)$ is Rayleigh distributed (i.e., it has the distribution with probability density function $f(x) \coloneqq x e^{-\frac{x^{2}}{2}} \mathbf{1}_{\{ x \in (0, \infty) \}}$, for  $x \in \mathbb{R}$). Then, our claim follows from \eqref{eq1}, \eqref{eq2}, \eqref{eq3}. 
\end{proof}


\section{Deterministic path transformations} \label{Sec:DeterministicExcising}

In this section, we consider the deterministic setting. We begin in Section \ref{Sec:BridgetoMeander} by defining a transformation that maps bridges to time-reversed meander-type functions, along with its inverse. In Section \ref{Sec:Excisingbridges}, we formalize the process $\tilde{Y}^{\rm br}$ by defining the transformation that excises and concatenates excursions from bridges, as described in the introduction. Similarly, Section \ref{Sec:Excisingmeanders} defines the equivalent excision transformation for time-reversed meander-type functions. We also establish several continuity properties of these transformations that may be of independent interest. Finally, Section \ref{sec:excisecontinuousfunct} provides an analogous transformation for continuous functions before they reach a fixed level.

For a real number $a >0$, let $\mathbf{C}([0,a], \mathbb{R})$ be the space of real-valued continuous functions on $[0,a]$ equipped with the uniform topology.  For $w \in \mathbf{C}([0,a], \mathbb{R})$, let $\rho(w) \coloneqq \inf\{s \in [0,a]: w(s) = \max_{0 \leq u \leq a} w(u) \}$ be the first time at which the maximum of $w$ is attained. 

For $a>0$, let $\mathbf{B}_{a} \coloneqq \{ w \in \mathbf{C}([0,a], \mathbb{R}): w(0) = w(a)=0 \}$ be the subspace of bridges of length $a$ and let $\mathbf{B}^{\ast}_{a}$ be the subset of $\mathbf{B}_{a}$ such that each function has a unique location of its maximum. Also, let $\mathbf{M}_{a} \coloneqq \{w \in \mathbf{C}([0,a], \mathbb{R}): w(0)=0 \, \, \text{and} \, \, \max_{0 \leq s \leq a} = w(a)\}$ and let $\mathbf{M}_{a}^{\ast} \coloneqq \{w \in \mathbf{C}([0,a], \mathbb{R}): w(0)=0 \, \, \text{and} \, \, \rho(w) = a\}$.

\subsection{From bridges to time-reversed meander-type functions} \label{Sec:BridgetoMeander}

We begin by introducing a transformation that maps bridges to time-reversed meander-type functions. For $a>0$ fixed, define $\mathcal{T}^{\rm me}_{a}: \mathbf{B}_{a} \rightarrow \mathbf{C}([0,a], \mathbb{R})$ by letting $\mathcal{T}^{\rm me}_{a}(w) = (\overleftarrow{w}(t), t \in [0,a])$, for $w \in \mathbf{B}_{a}$, where
\begin{align} 
\overleftarrow{w}(t)  = \begin{cases}
w(t), & \quad \text{for} \quad t \in [0, \rho], \\
w(\rho) + w(a+\rho-t), & \quad \text{for} \quad t \in [\rho, a],
\end{cases}
\end{align}
\noindent and $\rho = \rho(w)$. 

Next, we define the inverse transformation of $\mathcal{T}^{\rm me}_{a}$. For $a>0$ fixed, define $\mathcal{T}^{\rm br}_{a}: \mathbf{M}_{a} \rightarrow \mathbf{C}([0,a], \mathbb{R})$ by letting $\mathcal{T}^{\rm br}_{a}(w) = (w^{\rm br}(t), t \in [0,a])$, for $w \in \mathbf{M}_{a}$, where
\begin{align} \label{eq49}
w^{\rm br}(t)  = \begin{cases}
w(t), & \quad \text{for} \quad t \in [0, \gamma_{w(a)/2}], \\
w(a+\gamma_{w(a)/2}-t) -w(\gamma_{w(a)/2}),  & \quad \text{for} \quad t \in [\gamma_{w(a)/2}, a],
\end{cases}
\end{align}
\noindent and $\gamma_{w(a)/2} \coloneqq \inf\{s \in [0,a]: w(s) = w(a)/2 \}$. 

If $a=1$, we write simply  $\mathcal{T}^{\rm me}$ and $\mathcal{T}^{\rm br}$ instead of $\mathcal{T}^{\rm me}_{1}$ and $\mathcal{T}^{\rm br}_{1}$, respectively.

\begin{remark} \label{Remark9}
If $w \in \mathbf{B}_{a}$, then $\mathcal{T}^{\rm me}_{a}(w) \in \mathbf{M}_{a}$. Furthermore, if $w \in \mathbf{B}_{a}^{\ast}$, then $\mathcal{T}^{\rm me}_{a}(w) \in \mathbf{M}_{a}^{\ast}$. In this case, the unique location of the maximum of $\mathcal{T}^{\rm me}_{a}(w)$ is $a$, $\overleftarrow{w}(a) = 2 w(\rho)$, $\gamma_{\overleftarrow{w}(a) /2} = \rho$, and $\mathcal{T}^{\rm br}_{a} \circ \mathcal{T}^{\rm me}_{a}(w) = w$. 

Similarly, if $w \in \mathbf{M}_{a}$, then $\mathcal{T}^{\rm br}_{a}(w) \in \mathbf{B}_{a}$. Furthermore, if $w \in \mathbf{M}_{a}^{\ast}$, then $\mathcal{T}^{\rm br}_{a}(w) \in \mathbf{B}_{a}^{\ast}$. In this case, the unique location of the maximum of $\mathcal{T}^{\rm br}_{a}(w)$ is $\gamma_{w(a)/2}$, $w^{\rm br}(\gamma_{w(a)/2}) = w(a)/2$ and $\mathcal{T}^{\rm me}_{a} \circ \mathcal{T}^{\rm br}_{a}(w) =w$. 
\end{remark}

For notational convenience, we also use $\mathcal{T}^{\rm me}_{a}$ and $\mathcal{T}^{\rm br}_{a}$ to denote their restrictions to the domains $\mathbf{B}^{\ast}_{a}$ and $\mathbf{M}^{\ast}_{a}$, respectively, so that $\mathcal{T}^{\rm me}_{a}: \mathbf{B}^{\ast}_{a} \rightarrow \mathbf{C}([0,a], \mathbb{R})$ and $\mathcal{T}^{\rm br}_{a}: \mathbf{M}_{a}^{\ast} \rightarrow \mathbf{C}([0,a], \mathbb{R})$. 

\begin{proposition} \label{Pro3}
For $a>0$ fixed, the maps $\mathcal{T}^{\rm me}_{a}: \mathbf{B}^{\ast}_{a} \rightarrow \mathbf{C}([0,a], \mathbb{R})$ and $\mathcal{T}^{\rm br}_{a}: \mathbf{M}_{a}^{\ast} \rightarrow \mathbf{C}([0,a], \mathbb{R})$ are continuous with respect the uniform topology and (Borel) measurable.
\end{proposition}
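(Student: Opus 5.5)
The plan is to prove continuity directly by a sequential argument; Borel measurability then follows at once. The subsets $\mathbf{B}^{\ast}_{a}$ and $\mathbf{M}^{\ast}_{a}$ carry the trace of the uniform topology on $\mathbf{C}([0,a],\mathbb{R})$, which is separable and metric, so their Borel $\sigma$-algebras are the traces of the Borel $\sigma$-algebra. A continuous map between such spaces is Borel. It therefore suffices to fix $w_n \to w$ uniformly, with $w_n, w$ in the relevant domain, and show that the images converge uniformly.

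\textbf{Step 1 (argmax and hitting-time continuity).}
\begin{itemize}
\item For $\mathcal{T}^{\rm me}_{a}$, the key fact is $\rho(w_n)\to\rho(w)$ whenever $w\in\mathbf{B}^{\ast}_{a}$. Take any subsequential limit $\rho'$ of $\rho(w_n)$, which exists by compactness of $[0,a]$. Since $\max w_n\to\max w$ and $w_n(\rho(w_n))=\max w_n$, uniform convergence together with uniform continuity of $w$ gives $w(\rho')=\max w$. Uniqueness of the maximiser forces $\rho'=\rho(w)$.
\item For $\mathcal{T}^{\rm br}_{a}$, I need $\gamma_n:=\gamma_{w_n(a)/2}\to\gamma:=\gamma_{w(a)/2}$ whenever $w\in\mathbf{M}^{\ast}_{a}$. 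This is the step I expect to be the main obstacle, since first hitting times are generally only lower semicontinuous. I would avoid the issue by noting that, by Remark \ref{Remark9}, $\gamma_n=\rho(\mathcal{T}^{\rm br}_{a}(w_n))$ and $\gamma$ is the unique argmax of $\mathcal{T}^{\rm br}_{a}(w)$. That reformulation is circular, though, so I would argue directly instead.
\item \emph{Lower bound.} Any limit point $\gamma'$ of $\gamma_n$ satisfies $w(\gamma')=w(a)/2$, hence $\gamma'\ge\gamma$.
\item \emph{Upper bound.} I must rule out that $w$ touches the level $w(a)/2$ at $\gamma$ without crossing it, in which case the $w_n$ could miss it. Here the structure of $\mathbf{M}^{\ast}_{a}$ must be used. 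If it is genuinely insufficient, I would restrict to the a.s.\ set of Brownian-type paths where $w$ crosses levels immediately after first hitting them. I suspect that this, or an argument via the uniqueness of the argmax of $w^{\rm br}$, is what is really needed. Concretely, I would try to show that any limit point $\gamma'>\gamma$ would produce two maximisers of the limit bridge function.
\end{itemize}

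\textbf{Step 2 (uniform convergence of the images).} Write the image of $w_n$ as a concatenation of $w_n$ on $[0,\rho_n]$ with the reflected piece $w_n(\rho_n)+w_n(a+\rho_n-t)$, and similarly for $\mathcal{T}^{\rm br}_{a}$. I would bound $\sup_t|\overleftarrow{w}_n(t)-\overleftarrow{w}(t)|$ by three terms:
\[
\|w_n-w\|_\infty,\qquad |w(\rho_n)-w(\rho)|,\qquad \omega_w\bigl(|\rho_n-\rho|\bigr),
\]
where $\omega_w$ is the modulus of continuity of $w$. For $t$ lying between $\rho$ and $\rho_n$, both formulas differ from their common value at the junction by at most a multiple of $\omega_w(|\rho_n-\rho|)$, since the two branches agree at the switching point. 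Every term tends to $0$ by Step 1, so $\mathcal{T}^{\rm me}_{a}(w_n)\to\mathcal{T}^{\rm me}_{a}(w)$. The identical estimate with $\gamma_n,\gamma$ in place of $\rho_n,\rho$ handles $\mathcal{T}^{\rm br}_{a}$.
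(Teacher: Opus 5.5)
Your argument for $\mathcal{T}^{\rm me}_{a}$ is correct. The argmax step plus your direct three-term estimate replaces the paper's piecewise-linear time changes and its appeal to Skorokhod convergence.

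The gap is exactly where you suspected it. The bound $\limsup_{n}\gamma_{w_n(a)/2}\le\gamma_{w(a)/2}$ cannot be derived from $w\in\mathbf{M}^{\ast}_{a}$, because $\mathcal{T}^{\rm br}_{a}$ is in fact \emph{not} continuous on $\mathbf{M}^{\ast}_{a}$. Take $a=1$. Let $w$ be piecewise linear with $w(0)=0$, $w(1/3)=1/2$, $w(1/2)=1/4$, $w(1)=1$, and put $w_n(t)=w(t)-n^{-1}(1-6|t-1/3|)^{+}$. Then:
\begin{itemize}
\item $w,w_n\in\mathbf{M}^{\ast}_{1}$ and $\sup_{t}|w_n(t)-w(t)|=1/n$;
\item $\gamma_{w(1)/2}=1/3$, but $w_n<1/2$ on $[0,1/2]$, so $\gamma_{w_n(1)/2}=2/3$ for every $n$;
\item hence $\mathcal{T}^{\rm br}(w_n)(2/3)=w_n(2/3)=1/2$, while $\mathcal{T}^{\rm br}(w)(2/3)=w(2/3)-w(1/3)=0$.
\end{itemize}
Your ``two maximisers'' idea does not rescue this. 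The uniform limit of $\mathcal{T}^{\rm br}(w_n)$ does have two maximisers (at $1/3$ and $2/3$), but nothing in the hypotheses forbids that, and $\mathcal{T}^{\rm br}(w)$ itself still has a unique one. The paper's proof of Lemma \ref{lemma7} has the same hole. There, $\max_{[\gamma_{w(a)/2}+\varepsilon,\,a]}w_n>w_n(a)/2$ is trivially true since $w_n(a)>w_n(a)/2$. From it nothing follows about where $w_n$ \emph{first} reaches $w_n(a)/2$.

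Your fallback, restricting to paths that cross the level immediately after first hitting it, is the right repair. But it changes the statement rather than proving it. Continuity holds at every $w\in\mathbf{M}^{\ast}_{a}$ with $\gamma_{w(a)/2}=\inf\{s\in[0,a]: w(s)>w(a)/2\}$:
\begin{itemize}
\item Given $\varepsilon>0$, choose $s\in(\gamma_{w(a)/2},\gamma_{w(a)/2}+\varepsilon)$ with $w(s)>w(a)/2$.
\item Then $w_n(s)>w_n(a)/2>0=w_n(0)$ for large $n$, so the intermediate value theorem gives $\gamma_{w_n(a)/2}<s$.
\item Your lower bound and Step 2 then finish the proof.
\end{itemize}
This crossing property holds almost surely for the paths the paper needs:
\begin{itemize}
\item for $\overleftarrow{B}^{\rm me}$, because by \eqref{eq4} the path just after $\mu$ equals $B^{\rm br}(\mu)+B^{\rm br}(1-h)$, and $B^{\rm br}$ is positive at times arbitrarily close to $1$;
\item for $F^{\rm br}_{x}$ and for $B$ on $[0,T_{x}]$, by the strong Markov property at the hitting time of $x/2$.
\end{itemize}
So the defect is harmless for the probabilistic applications once the domain is restricted to such paths.

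Measurability of $\mathcal{T}^{\rm br}_{a}$ does hold on all of $\mathbf{M}^{\ast}_{a}$, but you can no longer deduce it from continuity. Argue instead as follows. The set $\{\gamma_{w(a)/2}\le t\}=\{\max_{[0,t]}w\ge w(a)/2\}$ is closed, so $w\mapsto\gamma_{w(a)/2}$ is lower semicontinuous. Hence each coordinate $w\mapsto w^{\rm br}(t)$ is Borel. Since coordinates generate the Borel $\sigma$-field of $\mathbf{C}([0,a],\mathbb{R})$, $\mathcal{T}^{\rm br}_{a}$ is Borel.
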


As we could not find an exact reference for Proposition \ref{Pro3}, we provide a proof in Appendix \ref{AProofofPro3} for the sake of completeness, though the result may be known.

\begin{remark} \label{Remark2}
Since $B^{\rm br}$ attains its maximum on $[0,1]$ at a unique location $\mathbb{P}$-almost surely (see, e.g., \cite[Lemma 0]{Vervaat1979}), we have that $\mathbb{P}(B^{\rm br} \in \mathbf{B}_{1}^{\ast})=1$. Furthermore, it follows, for instance, from \eqref{eq4} that $\mathbb{P}(\overleftarrow{B}^{\rm me} \in \mathbf{M}_{1}^{\ast})=1$. Note also from \eqref{eq4} and \eqref{eq5} that $\overleftarrow{B}^{\rm me} = \mathcal{T}^{\rm me}(B^{\rm br})$ and $B^{\rm br}=\mathcal{T}^{\rm br}(\overleftarrow{B}^{\rm me})$, respectively.
\end{remark}

We conclude this section by defining a transformation that we name the Brownian scaling transformation. For $a>0$ fixed, define $\mathcal{S}_{a}: \mathbf{C}([0,a], \mathbb{R}) \rightarrow \mathbf{C}([0,1], \mathbb{R})$ by letting $\mathcal{S}_{a}(w) = (\tilde{w}(t), t \in [0,1])$, for $w \in \mathbf{C}([0,a], \mathbb{R})$, where $\tilde{w}(t) = a^{-1/2}w(at)$, for $t \in [0,1]$. If $a=1$, we write simply  $\mathcal{S}$ instead of $\mathcal{S}_{1}$. The following result is straightforward and its proof is omitted.

\begin{lemma}
For $a>0$ fixed, $\mathcal{S}_{a}$ is continuous with respect to the uniform topology and (Borel) measurable.
\end{lemma}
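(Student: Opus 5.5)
The plan is to argue directly from the definition of $\mathcal{S}_{a}$, treating continuity and measurability separately; measurability will then follow from continuity.

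\emph{Continuity.} The map $\mathcal{S}_{a}$ is linear, since $\mathcal{S}_{a}(\lambda w + v) = \lambda \mathcal{S}_{a}(w) + \mathcal{S}_{a}(v)$ pointwise. First we check that it actually lands in $\mathbf{C}([0,1], \mathbb{R})$. The map $t \mapsto at$ is continuous from $[0,1]$ onto $[0,a]$, so $t \mapsto a^{-1/2} w(at)$ is a composition of continuous maps and hence continuous.

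Next, for $w, v \in \mathbf{C}([0,a], \mathbb{R})$, the change of variables $s = at$ covers all of $[0,a]$ as $t$ ranges over $[0,1]$. This gives the isometry-up-to-constant identity
\begin{align*}
\sup_{t \in [0,1]} |\mathcal{S}_{a}(w)(t) - \mathcal{S}_{a}(v)(t)| = a^{-1/2} \sup_{s \in [0,a]} |w(s) - v(s)|.
\end{align*}
So $\mathcal{S}_{a}$ is Lipschitz with constant $a^{-1/2}$ for the uniform norms, and in particular it is continuous.

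\emph{Measurability.} Both spaces carry their Borel $\sigma$-algebras generated by the uniform topology. A continuous map between topological spaces pulls open sets back to open sets. Consequently it pulls the generating class of the Borel $\sigma$-algebra into the Borel $\sigma$-algebra, which gives Borel measurability.

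I expect no step here to be a genuine obstacle. The only point that needs care is the surjectivity of $t \mapsto at$ onto $[0,a]$: it is what makes the supremum identity above an equality rather than merely an inequality, although an inequality would already be enough for continuity.
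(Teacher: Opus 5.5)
Your proposal is correct: the identity $\|\mathcal{S}_{a}(w)-\mathcal{S}_{a}(v)\|_{1} = a^{-1/2}\|w-v\|_{a}$ makes $\mathcal{S}_{a}$ Lipschitz, hence continuous, and continuity gives Borel measurability. The paper omits this proof as straightforward, and your argument is the routine verification it has in mind.
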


\subsection{Excising excursions from bridges} \label{Sec:Excisingbridges}

In this section, we define a transformation that excises excursions below the maximum of a bridge whenever they reach level $0$. For $w \in \mathbf{B}_{1}$, let $\rho = \rho(w)$ and define
\begin{align}
m^{\rm br}_{t}(w)  \coloneqq \begin{cases}
\max_{0 \leq s \leq t} w(s), & \quad \text{for} \quad t \in [0, \rho], \\
\max_{t \leq s \leq 1} w(s),  & \quad \text{for} \quad t \in [\rho, 1].
\end{cases}
\end{align}
\noindent Define also $g_{t}^{\rm br}(w)$ and $d_{t}^{\rm br}(w)$ the left and right endpoints of the excursion of $m^{\rm br}(w)- w$ that straddle time $t \in [0, 1]$, that is,
\begin{align} \label{eq42}
g_{t}^{\rm br}(w) \coloneqq \sup \{ s \in [0,t]: m^{\rm br}_{s}(w) - w(s) = 0\} \quad
\text{and} \quad 
d_{t}^{\rm br}(w)  \coloneqq \inf \{ s \in (t, 1]: m^{\rm br}_{s}(w)-w(s) = 0\},
\end{align}
\noindent with the convention 
$d_{1}^{\rm br}(w)  \coloneqq 1$. Let
\begin{align}
r^{\rm br}_{t}(w)  \coloneqq \max_{g_{t}^{\rm br}(w) \leq s \leq d_{t}^{\rm br}(w)} (m^{\rm br}_{s}(w) - w(s)), \quad \text{for} \quad t \in [0,1],
\end{align}
\noindent be the maximum of $m^{\rm br}(w)- w$ over the excursion interval $[g_{t}^{\rm br}(w), d_{t}^{\rm br}(w)]$, 
\begin{align} \label{eq38}
u_{t}^{\rm br}(w) \coloneqq \int_{0}^{t} \mathbf{1}_{\{r^{\rm br}_{s}(w) < m^{\rm br}_{s}(w) \}} {\rm d} s, \quad \text{for} \quad t \in [0,1],
\end{align}
\noindent and
\begin{align} \label{eq41}
\alpha_{s}^{\rm br}(w) = \alpha_{s}^{\rm br} \coloneqq \inf\{t \in [0,1]: u_{t}^{\rm br}(w) > s \}, \quad \text{for} \quad s \in [0, u_{1}^{\rm br}(w)],
\end{align}
\noindent with the convention $\alpha_{u_{1}^{\rm br}(w)}^{\rm br}(w) \coloneqq 1$. Then, define 
\begin{align}
\mathcal{H}^{\rm br}(w)\coloneqq ( w(\alpha_{t}^{\rm br}), t \in [0,u_{1}^{\rm br}(w)]).
\end{align}

Observe that $\mathcal{H}^{\rm br}(w)$ is the function obtained by excising the excursions of $w$ below $m^{\rm br}$ that reach level $0$ and then closing the gaps by joining the remaining excursions together. In particular, $w(\alpha_{t}^{\rm br}) \geq 0$ for all $t \in [0,u_{1}^{\rm br}(w)]$, with $w(\alpha_{0}^{\rm br}) = w(\alpha_{u_{1}^{\rm br}(w)}^{\rm br}) = 0$. Thus, if $u_{1}^{\rm br}(w) >0$, then $\mathcal{H}^{\rm br}(w)\in \mathbf{B}_{u_{1}^{\rm br}(w)}$. 
Note also that $u_{t}^{\rm br}(w)$ is the length of all non-excised excursions up to time $t$. In particular, $u_{1}^{\rm br}(w)$ is the total length of all non-excised excursions. 

Suppose that $w \in \mathbf{B}_{1}^{\ast}$. Then, $\rho = \rho(w)$ is the unique location of the of the maximum of $w$. In particular, $\rho \in (0,1)$ and thus,
$u_{1}^{\rm br}(w) \in (0,1]$. Furthermore,  $u_{\rho}^{\rm br}(w)$ is the unique location of the maximum of $\mathcal{H}^{\rm br}(w)$, $w(u_{\rho}^{\rm br}(w)) = w(\rho)$ and $\mathcal{H}^{\rm br}(w) \in \mathbf{B}^{\ast}_{u_{1}^{\rm br}(w)}$.

Define $\mathcal{G}^{\rm br}:  \mathbf{B}_1 \rightarrow \mathbf{C}([0,1], \mathbb{R})$ by letting $\mathcal{G}^{\rm br}(w) \coloneqq \mathcal{S}_{u_{1}^{\rm br}(w)} \circ \mathcal{H}^{\rm br}(w)$, for $w \in \mathbf{B}_1$, if $u_{1}^{\rm br}(w) >0$, and setting $\mathcal{G}^{\rm br}(w)$ equal to the constant zero function on $[0,1]$ otherwise. The main result of this section shows that $\mathcal{G}^{\rm br}$ is continuous when restricted to a specific subset of functions in $\mathbf{B}_1$.  

\begin{definition}[Regularity] \label{DefinitionRegular}
We say that $w\in \mathbf{B}_1$ is \emph{regular} if it satisfies the following two properties:
\begin{enumerate}[label=(\textbf{\roman*})]
\item \label{RegPro1} \textbf{Uniqueness of one-sided maximizers at rational times.} For every rational $t \in [0, 1]$, there exists a unique $\vec{\rho}_{t} \in [0, t]$ such that $\max_{0 \leq s \leq t}w(s) = w(\vec{\rho}_{t})$. Furthermore, there also exists a unique $\protect\cev{\rho}_{t} \in [t, 1]$ such that $\max_{t \leq s \leq 1}w(s)   = w(\protect\cev{\rho}_{t})$.

\item \label{RegPro2} \textbf{No strict local minimum at $0$.}  The level $0$ is not a strict local minimum of $w$.
\end{enumerate}

\noindent We denote by $\mathbf{B}^{{\rm reg}}_{1}$ the subset of $\mathbf{B}_1$ consisting of regular functions.
\end{definition}

With a slight abuse of notation, we let $\mathcal{G}^{\rm br}$ denote its restriction to $\mathbf{B}^{{\rm reg}}_1$, that is, $\mathcal{G}^{\rm br}: \mathbf{B}^{{\rm reg}}_1 \rightarrow \mathbf{C}([0,1], \mathbb{R})$. The proof of the following result is deferred to Appendix \ref{AProofofPro2}.

\begin{proposition} \label{Pro2}
The map $\mathcal{G}^{\rm br}: \mathbf{B}^{{\rm reg}}_1 \rightarrow \mathbf{C}([0,1], \mathbb{R})$ is continuous with respect to the uniform topology and (Borel) measurable.
\end{proposition}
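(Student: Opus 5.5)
We prove continuity at a fixed regular $w\in\mathbf{B}^{\rm reg}_1$ along an arbitrary sequence $w_n\to w$ in $\mathbf{B}^{\rm reg}_1$ (uniformly). Measurability then follows, because a continuous map on a subspace is Borel for the trace $\sigma$-algebra. We will use the extension convention from the introduction to handle the complement of $\mathbf{B}^{\rm reg}_1$.

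Write $\rho=\rho(w)$. By property \ref{RegPro1} and a density argument, $\rho_n:=\rho(w_n)\to\rho$. Indeed, the maximizer of $w$ on $[0,1]$ is unique, since $\vec\rho_t$ is unique for rational $t$ near $1$, and the argmax of a uniformly convergent sequence converges to the unique argmax of the limit. In the same way, for rational $t$ the one-sided maximizers of $w_n$ converge to $\vec\rho_t$ and $\cev\rho_t$. As $t\mapsto m^{\rm br}_t(w)$ is a continuous functional of $w$ and $\rho$, we get $m^{\rm br}(w_n)\to m^{\rm br}(w)$ uniformly.

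\textbf{Step 1: stability of the excursion structure.} The zero set of $m^{\rm br}(w)-w$ is the closed set of times where $w$ attains a running maximum from the left (before $\rho$) or from the right (after $\rho$). Uniqueness of one-sided maximizers at rational times implies that for every rational $t$ with $m^{\rm br}_t(w)>w(t)$, the straddling excursion interval $(g^{\rm br}_t(w),d^{\rm br}_t(w))$ is determined by the unique maximizer $\vec\rho_t$ (or $\cev\rho_t$) together with the next ladder time. We then show $g^{\rm br}_t(w_n)\to g^{\rm br}_t(w)$ and $d^{\rm br}_t(w_n)\to d^{\rm br}_t(w)$. The left endpoint follows from the convergence of maximizers. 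For the right endpoint we use that $w$ strictly exceeds its previous maximum immediately after $d_t$; uniqueness of the maximizer at rational times slightly beyond $d_t$ rules out a flat record. Consequently $r^{\rm br}_t(w_n)\to r^{\rm br}_t(w)$.

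\textbf{Step 2: the indicator $\mathbf{1}\{r^{\rm br}_s<m^{\rm br}_s\}$ converges for a.e.\ $s$.} The risk is an excursion whose depth exactly equals its level, i.e.\ one that touches $0$ at its minimum without crossing. Suppose an excursion of $m^{\rm br}(w)-w$ has $r^{\rm br}=m^{\rm br}$. Then $w$ reaches $0$ inside the excursion, and by \ref{RegPro2} $0$ is not a strict local minimum. Hence $w$ takes negative values nearby, so $w_n$ also reaches $0$ there for large $n$, and the excursion is excised for $w_n$ as well. Conversely, if $r^{\rm br}<m^{\rm br}$, then $w>0$ on the closed excursion interval, so the same holds for $w_n$ and the excursion is kept. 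Ignoring the Lebesgue-null zero set of $m^{\rm br}(w)-w$, dominated convergence gives $u^{\rm br}_t(w_n)\to u^{\rm br}_t(w)$ uniformly in $t$; it is monotone with a continuous limit. In particular $u^{\rm br}_1(w_n)\to u^{\rm br}_1(w)>0$.

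\textbf{Step 3: convergence of the time change and conclusion.} We prove $w_n(\alpha^{\rm br}_{s}(w_n))\to w(\alpha^{\rm br}_s(w))$ uniformly in $s$, after rescaling time to $[0,1]$ via $\mathcal{S}_{u^{\rm br}_1}$. The function $\alpha^{\rm br}(w)$ jumps exactly across the excised gaps. At those jump points, however, $w$ takes the common value $m^{\rm br}$ at both the left and right endpoints of the gap: an excised excursion starts and ends at the running maximum, and before $\rho$ the maximum is constant across it. So $w\circ\alpha^{\rm br}(w)$ is continuous despite the jumps. Given uniform convergence of $u^{\rm br}(w_n)$ and the fact that $m^{\rm br}(w_n)$ converges uniformly, the value $w_n(\alpha_s(w_n))$ is squeezed between $m^{\rm br}(w_n)$-values close to $w$ at a nearby gap endpoint. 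This yields uniform convergence. Combined with $u^{\rm br}_1(w_n)\to u^{\rm br}_1(w)$ and the joint continuity of $(a,f)\mapsto \mathcal{S}_a f$, this gives $\mathcal{G}^{\rm br}(w_n)\to\mathcal{G}^{\rm br}(w)$.

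The main obstacle is Step 2 together with the treatment of the infinitely many small excursions. Excised excursions must all have depth at least their level, and levels near $\rho$ are bounded below by a positive constant since $w(\rho)>0$. Near $t=0$ and $t=1$, however, the levels are small, and there are infinitely many excised excursions. We handle these boundary regions by a truncation argument: the total time spent in $[0,\varepsilon]\cup[1-\varepsilon,1]$ is small, and the path is uniformly small there. Steps 1–2 are applied only to the finitely many excursions of length exceeding $\delta$ inside $[\varepsilon,1-\varepsilon]$.
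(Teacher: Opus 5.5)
Your architecture matches the paper's: convergence of excursion endpoints, then a.e.\ convergence of the keep/excise indicators, then uniform convergence of $u^{\rm br}(w_n)$, then convergence of the time-changed paths. However, Step 2 rests on a claim that fails for general regular $w$. The record set $Z:=\{t: m^{\rm br}_t(w)=w(t)\}$ need not be Lebesgue-null. Nothing in (i)--(ii) prevents $w$ from being strictly increasing on some interval $[a,\rho]$ with $w(a)>\max_{[0,a)}w$. Then every rational $t\in[a,\rho]$ has the unique left maximiser $t$, and $w>0$ there, so (ii) is untouched; yet $[a,\rho]\subset Z$. For $t$ in the interior of such a set, no excursion interval of $w$ contains $t$. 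Your Step 1 only treats rational $t$ with $m^{\rm br}_t(w)>w(t)$, so it says nothing about the $w_n$-excursion straddling $t$, and you have not excluded that it is long and reaches $0$. Hence the indicator convergence, and with it $u^{\rm br}(w_n)\to u^{\rm br}(w)$, is unproved on a set of positive measure. The truncation in your last paragraph controls only small excursions and the boundary strips, so it does not cover this set either. What you need is convergence of $g^{\rm br}_t(w_n)$ and $d^{\rm br}_t(w_n)$ at every $t$ off a null set, including record times with $g^{\rm br}_t(w)=d^{\rm br}_t(w)=t$, so that $r^{\rm br}_t(w_n)\to 0<m^{\rm br}_t(w)$ there. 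The paper's endpoint lemma supplies exactly this: $d^{\rm br}_t(w_n)\to d^{\rm br}_t(w)$ for all $t$, and $g^{\rm br}_t(w_n)\to g^{\rm br}_t(w)$ for all $t$ outside the countable jump set $J(g^{\rm br}(w))$. That set absorbs record times sharing their level with an earlier time, and (i) is used to exclude flat records and to identify $g^{\rm br}_t$ as a unique maximiser.

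Two further points need repair. First, ``continuous on a subspace, hence Borel for the trace $\sigma$-algebra'' does not license the extension convention you invoke. That convention needs the complement of $\mathbf{B}^{\rm reg}_1$ to be Borel in $\mathbf{C}([0,1],\mathbb{R})$. Proving this is the actual measurability content of the paper's proof: it writes (i) and (ii) as countable intersections, over rational times and rational intervals, of sets defined by uniqueness of one-sided maxima and by $\min_{[q_1,q_2]}w\neq 0$. Second, Step 3 is not yet an argument. Uniform convergence of $u^{\rm br}(w_n)$ alone allows $\alpha^{\rm br}_s(w_n)$ to land deep inside a gap of $u^{\rm br}(w)$, where $w$ may be far below $m^{\rm br}(w)$. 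You must show three things:
\begin{enumerate}
\item For large $n$, $w_n$ has no record times and no kept excursions well inside an excised excursion of $w$. This holds since $m^{\rm br}(w)-w$ is bounded below there and, by (ii), $w<0$ somewhere inside.
\item $w$ takes equal values at the two ends of every gap. A gap could a priori be a union of excised excursions across which $m^{\rm br}$ rises, and this has to be excluded using (i).
\item The convergence is uniform over the infinitely many small gaps.
\end{enumerate}
The paper organises this as $J_1$-convergence of the rescaled inverses $h^{(n)}\to h$. It checks this via pointwise convergence off the jump set and convergence of the sums of squared jumps, then applies Whitt's theorem on continuity of composition. Uniform convergence is then recovered because the limit is continuous.
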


\begin{corollary} \label{corollary5} 
We have that $\mathbb{P}(B^{\rm br}\in \mathbf{B}^{\rm reg}_1)=1$. 
\end{corollary}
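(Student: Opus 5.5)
We need to prove that almost surely $B^{\rm br}$ satisfies properties \ref{RegPro1} and \ref{RegPro2} of Definition \ref{DefinitionRegular}. Since there are countably many rational $t$, it suffices to show that each event fails with probability zero for a fixed $t$ and for level $0$, and then take a countable union.

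\textbf{Step 1: reduction of \ref{RegPro1} to Brownian motion.} The law of $B^{\rm br}$ restricted to $[0,t]$, for fixed $t<1$, is absolutely continuous with respect to that of $(B(s), s\in[0,t])$. Indeed, it is the Brownian law with a density depending only on $B(t)$. Thus the uniqueness of the maximizer of $B^{\rm br}$ on $[0,t]$ follows from the classical a.s. uniqueness of the maximizer of Brownian motion on a fixed interval $[0,t]$.

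For that classical fact we argue as follows. For rationals $p<q<r$, the maxima of $B$ over $[p,q]$ and over $[q,r]$ are a.s. different: by the Markov property at $q$, conditionally on $\mathcal F_q$ the maximum over $[q,r]$ has an absolutely continuous law. Two distinct maximizing times would be separated by some rational $q$, which gives a contradiction.

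The endpoint cases need separate mention. For $t=0$ there is nothing to prove, and for $t=1$ we invoke \cite[Lemma 0]{Vervaat1979}. The backward maximizer on $[t,1]$ is handled by time reversal, since $(B^{\rm br}(1-s))_{s}\stackrel{d}{=}B^{\rm br}$. Alternatively, one can apply the same argument directly with rational splitting times, using the Markov property of the bridge on $[q,1]$ given $B^{\rm br}(q)$, which again yields an absolutely continuous conditional law of the maximum.

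\textbf{Step 2: property \ref{RegPro2}.} We must exclude a time $s\in[0,1]$ and $\varepsilon>0$ with $B^{\rm br}(s)=0$ and $B^{\rm br}>0$ on $(s-\varepsilon,s+\varepsilon)\setminus\{s\}$. The endpoints $s=0$ and $s=1$ are the delicate points: there $B^{\rm br}$ vanishes, and by the law of the iterated logarithm (transferred through the absolute continuity on $[0,1/2]$ and time reversal) it takes negative values arbitrarily close to $0$ and to $1$. Reading "strict local minimum" one-sidedly at the endpoints, $0$ is therefore not a strict local minimum there.

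For interior $s$, such an $s$ would satisfy $\min_{[p,q]}B^{\rm br}=0$ for some rationals $p<s<q$ within the neighbourhood. However, by the same absolute continuity and Markov argument, for fixed rationals $p<q$ in $(0,1)$ the minimum of $B^{\rm br}$ over $[p,q]$ has a continuous distribution. Hence it equals $0$ with probability zero. A countable union over rational pairs finishes the argument.

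\textbf{Main obstacle.} The main care is in the precise meaning of "strict local minimum at the level $0$", in particular at the endpoints $0$ and $1$, and in rigorously transferring the Brownian statements to the bridge near the endpoint $1$. I would handle both by time reversal together with the local absolute continuity of the bridge with respect to Brownian motion on $[0,1/2]$.
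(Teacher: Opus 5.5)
Your proof is correct and follows the same route as the paper: you transfer the classical Brownian facts to $B^{\rm br}$ by absolute continuity and time reversal, and take a countable union over rational times. The facts transferred are a.s.\ uniqueness of the maximizer on a fixed interval and $0$ not being a strict local minimum. You are in fact more careful than the paper, which asserts that the laws of $(B(t), t\in[0,1])$ and $B^{\rm br}$ are equivalent on all of $[0,1]$; taken literally this fails, since the two laws are mutually singular because $B^{\rm br}(1)=0$. Your use of absolute continuity on $[0,t]$ for $t<1$, together with time reversal (and Vervaat's lemma for the global maximum), is the right way to make the transfer, and you also supply the proofs of the classical facts that the paper only cites.
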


\begin{proof}
Recall that $(B(t), t \geq 0)$ denotes a standard Brownian motion started at $B(0)=0$. It is well-known that, for every $a \in [0,1]$, $(B(t), t \in [0,a])$ attains its absolute maximum at a unique time with probability one; see, e.g., \cite[Theorem 2.11]{Moters2010} or \cite[Exercise (3.26) in Chapter III]{Yor1999}. Similarly, one can deduce that $(B(t), t \in [a,1])$ attains its absolute maximum at a unique time with probability $1$. Recall that the distribution of $(B(t), t \in [0,1])$ and $(B^{\rm br}(t), t \in [0,1])$ are equivalent; see \cite[Section 3 in Chapter VIII]{Bertoin1996} or \cite[Exercise (3.16) in Chapter I]{Yor1999}. Therefore, it follows that with probability one $B^{\rm br}$ satisfies property \ref{RegPro1} in Definition \ref{DefinitionRegular}; cf. also with  \cite[Proof of Lemma 0]{Vervaat1979}. 

On the other hand, it is not difficult to prove that $0$ is not a strict local minimum of $(B(t), t \in [0,1])$ (cf. with \cite[Exercise (3.26) in Chapter III]{Yor1999}). Therefore it follows that with probability one $B^{\rm br}$ satisfies property \ref{RegPro2} in Definition \ref{DefinitionRegular}. This concludes our proof. 
\end{proof}

\begin{remark} \label{Remark1}
Note that $Y^{\rm br} = \mathcal{H}^{\rm br}(B^{\rm br})$ and $\tau^{\rm br}=u_{1}^{\rm br}(B^{\rm br})$. Furthermore, $\mathcal{G}^{\rm br}(B^{\rm br}) = \tilde{Y}^{\rm br}$. 
\end{remark}

\subsection{Excising excursions from time-reversed meander-type functions} \label{Sec:Excisingmeanders}

In this section, we define the transformation that excises the excursions below the maximum of time-reversed meander-type functions. For $w \in \mathbf{M}_{1}$, let $\bar{w}(t)  \coloneqq \max_{0 \leq s \leq t} w(s)$, for $t \in [0, 1]$. Define also $g_{t}^{\rm me}(w)$ and $d_{t}^{\rm me}(w)$ the left and right endpoints of the excursion of $\bar{w}- w$ that straddle time $t \in [0, 1]$, that is,
\begin{align}
g_{t}^{\rm me}(w) \coloneqq \sup \{ s \in [0,t]: \bar{w}(s) - w(s) = 0\} 
\quad \text{and} \quad d_{t}^{\rm me}(w) \coloneqq \inf \{ s \in (t, 1]: \bar{w}(s)-w(s) = 0\},
\end{align}
\noindent with the convention 
$d_{1}^{\rm me}(w) \coloneqq 1$. Let
\begin{align}
r_{t}^{\rm me}(w)  \coloneqq \max_{g_{t}^{\rm me}(w) \leq s \leq d_{t}^{\rm me}(w)} (\bar{w}(s) - w(s)), \quad \text{for} \quad t \in [0,1],
\end{align}
\noindent be the maximum of $\bar{w} - w$ over the excursion interval $[g_{t}^{\rm me}(w), d_{t}^{\rm me}(w)]$. Recall that $\gamma_{w(1)/2} \coloneqq \inf\{s\in[0,1]:w(s)=w(1)/2\}$ and define
\begin{align} \label{eq47}
u_{t}^{\rm me}(w) \coloneqq \begin{cases}
\int_{0}^{t} \mathbf{1}_{\{r_{s}^{\rm me}(w) < \bar{w}(s) \}} {\rm d} s, & \quad \text{for} \quad t \in [0, \gamma_{w(1)/2}], \\
\int_{0}^{\gamma_{w(1)/2}} \mathbf{1}_{\{r_{s}^{\rm me}(w) < \bar{w}(s) \}} {\rm d} s + \int_{\gamma_{w(1)/2}}^{t} \mathbf{1}_{\{r_{s}^{\rm me}(w) < \bar{w}(s) -w(1)/2 \}} {\rm d} s,  & \quad \text{for} \quad t \in [\gamma_{w(1)/2}, 1],
\end{cases}
\end{align}
\noindent and
\begin{align}
\alpha_{s}^{\rm me}(w) = \alpha_{s}^{\rm me} \coloneqq \inf\{t \in [0,1]: u_{t}^{\rm me}(w) > s \}, \quad \text{for} \quad s \in [0, u_{1}^{\rm me}(w)].
\end{align}
\noindent with the convention $\alpha_{u_{1}^{\rm me}(w)}^{\rm me}(w) \coloneqq 1$. Then, let
\begin{align} \label{eq48}
\mathcal{H}^{\rm me}(w)\coloneqq ( w(\alpha_{t}^{\rm me}), t \in [0,u_{1}^{\rm me}(w)]).
\end{align}

Suppose that $w \in \mathbf{M}_{1}^{\ast}$. Then,  $w(1)>0$, $\gamma_{w(1)/2} \in (0,1)$ and $u_{1}^{\rm me}(w) \in (0,1]$. Note also that, $w(\alpha_{u_{1}^{\rm me}(w)}^{\rm me}) = w(1)$ and $\mathcal{H}^{\rm me}(w) \in \mathbf{M}_{u_{1}^{\rm me}(w)}^{\ast}$. 

Define $\mathcal{G}^{\rm me}:  \mathbf{M}_1  \rightarrow \mathbf{C}([0,1], \mathbb{R})$ by letting $\mathcal{G}^{\rm me}(w) \coloneqq \mathcal{S}_{u_{1}^{\rm me}(w)} \circ \mathcal{H}^{\rm me}(w)$, for $w \in \mathbf{M}_1$, if  $u_{1}^{\rm me}(w) > 0$, and setting $\mathcal{G}^{\rm me}(w)$ equal to the constant zero function on $[0,1]$ otherwise. The following properties, whose proofs we omit, follow directly from the definitions of $\mathcal{T}^{\rm br}_{a}$, $\mathcal{T}^{\rm me}_{a}$, $\mathcal{S}_{a}$, $\mathcal{H}^{\rm br}$, $\mathcal{H}^{\rm me}$, $\mathcal{G}^{\rm br}$ and $\mathcal{G}^{\rm me}$. 

\begin{lemma} \label{lemma8}
For $w \in \mathbf{B}_{1}$, $\mathcal{T}^{\rm br} \circ \mathcal{G}^{\rm me}\circ \mathcal{T}^{\rm me}(w) = \mathcal{G}^{\rm br}(w)$. Furthermore, $u_{1}^{\rm br}(w) = u_{1}^{\rm me}(\mathcal{T}^{\rm me}(w))$.
\end{lemma}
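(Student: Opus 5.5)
The plan is to compare the two constructions directly, excursion by excursion, after computing the running maximum of $v \coloneqq \mathcal{T}^{\rm me}(w)$ explicitly. Fix $w \in \mathbf{B}_1$ and let $\rho = \rho(w)$.

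\textbf{Step 1: the running maximum of $v$.} On $[0,\rho]$ we have $v = w$, so $\bar v(t) = \max_{0\le s\le t} w(s) = m^{\rm br}_t(w)$. For $t \in [\rho,1]$ set $\sigma(t) \coloneqq 1+\rho-t \in [\rho,1]$. Then $v(t) = w(\rho) + w(\sigma(t))$, and in particular $v(\rho) = w(\rho)$ and $v(1) = 2w(\rho)$. Since $\max_{u \ge \sigma(t)} w(u) \ge w(1) = 0$, we get
\begin{align*}
\bar v(t) = w(\rho) + \max_{\sigma(t)\le u\le 1} w(u) = w(\rho) + m^{\rm br}_{\sigma(t)}(w).
\end{align*}
Hence $\bar v(t) - v(t) = m^{\rm br}_{\sigma(t)}(w) - w(\sigma(t))$ for $t \ge \rho$. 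Also $\gamma_{v(1)/2} = \inf\{s : v(s) = w(\rho)\} = \rho$. This uses that $\rho$ is the \emph{first} time $w$ attains its maximum, together with $v < w(\rho)$ before $\rho$, so it holds on all of $\mathbf{B}_1$ without any uniqueness assumption.

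\textbf{Step 2: matching excursions and the clock.} The reflection $\sigma$ maps $[\rho,1]$ onto itself and sends zeros of $\bar v - v$ to zeros of $m^{\rm br}(w) - w$. So excursion intervals of $\bar v - v$ in $[\rho,1]$ are exactly the $\sigma$-images of those of $m^{\rm br}(w) - w$, with the same heights; on $[0,\rho]$ the two processes coincide. Thus $r^{\rm me}_t(v) = r^{\rm br}_t(w)$ for $t \le \rho$, and $r^{\rm me}_t(v) = r^{\rm br}_{\sigma(t)}(w)$ for $t \ge \rho$. One should check the boundary excursions at $\rho$ and at $1$ separately, using the conventions $d_1 = 1$.

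The keep-conditions then match. Before $\rho$ both conditions read $r < m^{\rm br}$. After $\rho$, the condition $r^{\rm me}_t(v) < \bar v(t) - v(1)/2$ reads $r^{\rm br}_{\sigma(t)}(w) < m^{\rm br}_{\sigma(t)}(w)$. The change of variables $s = \sigma(t)$ in \eqref{eq47} then gives $u_t^{\rm me}(v) = u_\rho^{\rm br}(w) + \bigl(u_1^{\rm br}(w) - u_{\sigma(t)}^{\rm br}(w)\bigr)$ for $t \ge \rho$. Taking $t = 1$ yields $u_1^{\rm me}(v) = u_1^{\rm br}(w)$, which is the second claim.

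\textbf{Step 3: the excised paths.} Write $a \coloneqq u_1^{\rm br}(w)$ and $b \coloneqq u_\rho^{\rm br}(w)$. Inverting the clock relation from Step 2 gives:
\begin{itemize}
\item $\alpha^{\rm me}_s(v) = \alpha^{\rm br}_s(w)$ for $s \le b$;
\item $\alpha^{\rm me}_s(v) = \sigma\bigl(\alpha^{\rm br}_{a+b-s}(w)\bigr)$ for $s \ge b$, up to a null set of jump times of the inverses.
\end{itemize}
Since both excised paths are continuous, this is enough. It yields
\begin{align*}
\mathcal{H}^{\rm me}(v)(s) =
\begin{cases}
\mathcal{H}^{\rm br}(w)(s), & s \le b, \\
w(\rho) + \mathcal{H}^{\rm br}(w)(a+b-s), & s \ge b,
\end{cases}
\end{align*}
that is, $\mathcal{H}^{\rm me}(v)$ is $\mathcal{H}^{\rm br}(w)$ with its post-$b$ part flipped as in $\mathcal{T}^{\rm me}_a$.

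Next apply $\mathcal{T}^{\rm br}_a$ to this path. The first hitting time of half its terminal value $w(\rho)$ is $b$: the kept excursions before $\rho$ stay strictly below the running maximum, which is less than $w(\rho)$ until time $\rho$. With this, formula \eqref{eq49} undoes the flip directly, so $\mathcal{T}^{\rm br}_a \circ \mathcal{H}^{\rm me}(v) = \mathcal{H}^{\rm br}(w)$. This is verified from the definitions rather than via Remark \ref{Remark9}, so uniqueness of the maximum is not needed. Finally, $\mathcal{T}^{\rm br} \circ \mathcal{S}_a = \mathcal{S}_a \circ \mathcal{T}^{\rm br}_a$, because the scaling is linear in time and space and preserves hitting levels proportionally. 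This gives the first claim when $a > 0$. When $a = 0$, both sides are the zero function, since $\mathcal{T}^{\rm br}$ maps the zero function to itself.

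The main obstacle is Step 3. It requires handling the right-continuous inverses $\alpha$ under the time reversal $\sigma$, where left and right limits swap, and checking the degenerate boundary cases: excursions touching $\rho$ or $1$, and the case $u_\rho^{\rm br}(w) = 0$. I would handle these by arguing pointwise on the complement of the countable set of flat stretches of $u$, and then conclude everywhere by continuity.
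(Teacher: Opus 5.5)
Your proposal is correct and takes the same approach as the paper, which omits the proof of Lemma~\ref{lemma8} and says only that it follows directly from the definitions; you carry out that verification: reflect $[\rho,1]$ by $\sigma$, match the keep-indicators off a null set, invert the clocks, and undo the flip with $\mathcal{T}^{\rm br}_a$ together with $\mathcal{T}^{\rm br}\circ\mathcal{S}_a=\mathcal{S}_a\circ\mathcal{T}^{\rm br}_a$. The one input you take for granted is continuity of $\mathcal{H}^{\rm br}(w)$, which is what upgrades the exact relation $\alpha^{\rm me}_s(v)=\sigma\bigl(\alpha^{\rm br}_{(a+b-s)-}(w)\bigr)$ (a left limit, for $s\in[b,a)$) to your everywhere identity; the paper asserts this continuity in Section~\ref{Sec:Excisingbridges} (a short argument: $m^{\rm br}(w)$ is constant on every flat stretch of $u^{\rm br}(w)$, so flat stretches have endpoints at equal heights), and on the $\mathcal{H}^{\rm me}(v)$ side right-continuity is all you need.
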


\begin{lemma} \label{lemma9}
For $w \in \mathbf{M}_{1}$, $\mathcal{T}^{\rm br} \circ \mathcal{G}^{\rm me}(w) = \mathcal{S}_{u_{1}^{\rm me}(w)}  \circ \mathcal{T}^{\rm br}_{u_{1}^{\rm me}(w)} \circ \mathcal{H}^{\rm me}(w)$ and $\mathcal{T}^{\rm me}  \circ \mathcal{G}^{\rm br} \circ \mathcal{T}^{\rm br}(w) = \mathcal{G}^{\rm me}(w)$. Furthermore, $u_{1}^{\rm me}(w) = u_{1}^{\rm br}(\mathcal{T}^{\rm br}(w))$.
\end{lemma}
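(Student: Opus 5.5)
The plan is a direct verification from the definitions. Fix $w \in \mathbf{M}_{1}$ and write $v \coloneqq \mathcal{T}^{\rm br}(w)$, with $\gamma = \gamma_{w(1)/2}$.

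The first step is to identify the excursion structure of $v$ below $m^{\rm br}(v)$ with that of $w$ below $\bar w$. On $[0,\gamma]$ we have $v = w$. Moreover $\rho(v)=\gamma$ (when $w\in\mathbf{M}_1^\ast$; in general one checks that the first maximizer of $v$ is still $\gamma$, since $\max v = w(1)/2 = w(\gamma)$ and $w<w(1)/2$ before $\gamma$). Hence $m^{\rm br}_t(v) = \bar w(t)$ and the excursions coincide there, including the maxima $r$ of the excursions. On $[\gamma,1]$, the formula \eqref{eq49} gives $v(t) = w(1+\gamma-t) - w(1)/2$. Since $s \mapsto 1+\gamma-s$ is an order-reversing bijection of $[\gamma,1]$, it follows that
\[
m^{\rm br}_t(v) = \max_{t\le s\le 1} v(s) = \max_{\gamma \le s' \le 1+\gamma-t} w(s') - w(1)/2 = \bar w(1+\gamma-t) - w(1)/2 .
\]
Here the last equality uses that $\max_{[0,\gamma]} w = w(1)/2 \le$ the maximum over $[\gamma,\cdot]$. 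Consequently $m^{\rm br}(v)-v$ at $t$ equals $\bar w - w$ at $1+\gamma-t$. Excursion intervals of $v$ in $[\gamma,1]$ are therefore the reflections of the excursion intervals of $w$ in $[\gamma,1]$, and the excursion maxima agree: $r^{\rm br}_t(v) = r^{\rm me}_{1+\gamma-t}(w)$. We must also check that $\gamma$ is a zero of both difference processes, so that no excursion straddles $\gamma$. The exclusion criterion $r < m^{\rm br}_t(v)$ becomes $r^{\rm me}_{s}(w) < \bar w(s) - w(1)/2$ with $s = 1+\gamma-t$, which is exactly the second case of \eqref{eq47}.

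The second step is to integrate. Since time reflection preserves Lebesgue measure, this gives $u^{\rm br}_1(v) = u^{\rm me}_1(w)$, which is the last claim. It further shows that $\mathcal{H}^{\rm br}(v)$ is obtained from $\mathcal{H}^{\rm me}(w)$ by the same glue-and-reflect rule. Concretely, with $a = u^{\rm me}_1(w)$ and $c = u^{\rm me}_\gamma(w)$, the kept parts on $[0,\gamma]$ concatenate identically. The kept parts on $[\gamma,1]$ appear in reversed order and are shifted by $-w(1)/2$. Also $\mathcal{H}^{\rm me}(w)$ first reaches $w(1)/2$ at time $c$, because kept excursions before $\gamma$ stay below $w(1)/2$ and the path reaches $w(1)/2$ at $\gamma$, which is not excised. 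This yields $\mathcal{H}^{\rm br}(\mathcal{T}^{\rm br}(w)) = \mathcal{T}^{\rm br}_a(\mathcal{H}^{\rm me}(w))$.

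The final step is bookkeeping. Applying $\mathcal{S}_a$ to both sides gives $\mathcal{G}^{\rm br}\circ\mathcal{T}^{\rm br}(w) = \mathcal{S}_a\circ\mathcal{T}^{\rm br}_a\circ\mathcal{H}^{\rm me}(w)$, and similarly $\mathcal{T}^{\rm br}\circ\mathcal{G}^{\rm me}(w) = \mathcal{T}^{\rm br}\circ\mathcal{S}_a\circ\mathcal{H}^{\rm me}(w)$. Since $\mathcal{S}_a$ commutes with $\mathcal{T}^{\rm br}$ (scaling preserves hitting times of half the endpoint after rescaling), the two expressions agree, which gives the first identity. Applying $\mathcal{T}^{\rm me}$ and using Remark \ref{Remark9} ($\mathcal{T}^{\rm me}\circ\mathcal{T}^{\rm br}=\mathrm{id}$ on the relevant class) gives the second. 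The degenerate case $a=0$ gives zero functions on both sides.

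The main obstacle I expect is the handling of ties and non-uniqueness for general $w \in \mathbf{M}_1$ rather than $\mathbf{M}_1^\ast$. These issues include whether $\rho(v)=\gamma$, whether $\gamma$ is genuinely an excursion endpoint, and the boundary conventions $d_1 = 1$ and $\alpha_{u_1}=1$. I would treat these by carefully comparing first-hitting conventions, and if necessary note that the identities are only needed on $\mathbf{M}_1^\ast$, which has full measure.
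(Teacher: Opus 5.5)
Your proposal is correct and is essentially the direct verification the paper has in mind; the paper omits the proof as following from the definitions. Your reflection identity $m^{\rm br}_t(v)-v(t)=\bar w(s)-w(s)$ with $s=1+\gamma-t$ gives $u^{\rm br}_t(v)=u^{\rm me}_t(w)$ on $[0,\gamma]$ and $u^{\rm br}_t(v)=a+c-u^{\rm me}_{1+\gamma-t}(w)$ on $[\gamma,1]$, and the rest is the commutation $\mathcal{T}^{\rm br}\circ\mathcal{S}_a=\mathcal{S}_a\circ\mathcal{T}^{\rm br}_a$ plus $\mathcal{T}^{\rm me}\circ\mathcal{T}^{\rm br}=\mathrm{id}$.

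Only small precision points remain:
\begin{enumerate}
\item The identity $r^{\rm br}_t(v)=r^{\rm me}_{1+\gamma-t}(w)$ can fail at the countably many excursion endpoints, because the $\sup$/$\inf$ conventions defining $g$ and $d$ are not reflection-invariant. This is harmless for the integrals.
\item Reflecting time turns $\alpha^{\rm br}$ on $[c,a]$ into the left-continuous inverse of $u^{\rm me}$. This is harmless because $w\circ\alpha^{\rm me}$ is continuous.
\item $\mathcal{T}^{\rm me}\circ\mathcal{T}^{\rm br}=\mathrm{id}$ holds on all of $\mathbf{M}_1$ under the first-hitting and first-maximizer conventions. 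So your fallback to $\mathbf{M}_1^\ast$ is not needed, and it would in any case prove less than the stated lemma.
\end{enumerate}
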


We will also show that $\mathcal{G}^{\rm me}$ is continuous when restricted to a specific subset of functions in $\mathbf{M}_1$.  

\begin{definition}[Regularity] \label{DefinitionRegularII}
We say that $w \in \mathbf{M}_1$ is \emph{regular} if $\mathcal{T}^{\rm br}(w) \in \mathbf{B}_{1}^{{\rm reg}}$ (i.e., $\mathcal{T}^{\rm br}(w)$ satisfies \ref{RegPro1} and \ref{RegPro2} in Definition \ref{DefinitionRegular}). Let $\mathbf{M}^{{\rm reg}}_{1}$ be the subset of $\mathbf{M}_1$ consisting of regular functions.
\end{definition}

In particular, for  $w \in \mathbf{M}_1^{\rm reg}$, $\mathcal{G}^{\rm me}(w)$ is strictly positive on $(0,1]$ and is zero only at $0$. With a slight abuse of notation, we let $\mathcal{G}^{\rm me}$ denote its restriction to $\mathbf{M}^{\rm reg}_{1}$, that is,  $\mathcal{G}^{\rm me}: \mathbf{M}^{\rm reg}_{1} \rightarrow \mathbf{C}([0,1], \mathbb{R})$. The proof of the following result is provided in Appendix \ref{AProofofPro4}. 

\begin{proposition} \label{Pro4}
The map $\mathcal{G}^{\rm me}: \mathbf{M}^{\rm reg}_{1} \to  \mathbf{C}([0,1], \mathbb{R})$ is continuous with respect to the uniform topology and (Borel) measurable.
\end{proposition}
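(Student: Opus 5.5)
The cleanest route is to reduce Proposition \ref{Pro4} to Proposition \ref{Pro2} and Proposition \ref{Pro3} through the identity of Lemma \ref{lemma9}:
\begin{align*}
\mathcal{G}^{\rm me}(w) = \mathcal{T}^{\rm me} \circ \mathcal{G}^{\rm br} \circ \mathcal{T}^{\rm br}(w), \qquad w \in \mathbf{M}_1 .
\end{align*}
For $w \in \mathbf{M}^{\rm reg}_1$ we have $\mathcal{T}^{\rm br}(w) \in \mathbf{B}^{\rm reg}_1$ by Definition \ref{DefinitionRegularII}. The plan is to verify three things: that $\mathcal{T}^{\rm br}$ is continuous from $\mathbf{M}^{\rm reg}_1$ into $\mathbf{B}^{\rm reg}_1$; that $\mathcal{G}^{\rm br}$ is continuous on $\mathbf{B}^{\rm reg}_1$, which is Proposition \ref{Pro2}; and that $\mathcal{T}^{\rm me}$ is continuous at every point of the image $\mathcal{G}^{\rm br}(\mathbf{B}^{\rm reg}_1)$. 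Continuity of the composition then follows. Borel measurability is automatic, since a continuous map on a subspace of a metric space is Borel for the trace $\sigma$-algebra.

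\textbf{Step 1.} I first check $\mathbf{M}^{\rm reg}_1 \subset \mathbf{M}^{\ast}_1$. A regular bridge satisfies \ref{RegPro1} with $t=1$, so its maximum is unique and $\mathcal{T}^{\rm br}(w) \in \mathbf{B}^{\ast}_1$. Since $w \in \mathbf{M}_1$ gives $\mathcal{T}^{\rm me}\circ\mathcal{T}^{\rm br}(w) = w$, Remark \ref{Remark9} yields $w \in \mathbf{M}^{\ast}_1$. Proposition \ref{Pro3} then gives continuity of $\mathcal{T}^{\rm br}$ on $\mathbf{M}^{\ast}_1$, hence on the subset $\mathbf{M}^{\rm reg}_1$.

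\textbf{Step 2.} Next I check that for $v \in \mathbf{B}^{\rm reg}_1$ the rescaled path $\mathcal{G}^{\rm br}(v)$ lies in $\mathbf{B}^{\ast}_1$. By the discussion preceding Definition \ref{DefinitionRegular}, $\mathcal{H}^{\rm br}(v) \in \mathbf{B}^{\ast}_{u_1^{\rm br}(v)}$ with $u_1^{\rm br}(v) \in (0,1]$. Brownian scaling by $\mathcal{S}_{a}$ preserves both the bridge property and uniqueness of the maximizer. So the image of $\mathcal{G}^{\rm br}$ sits inside $\mathbf{B}^{\ast}_1$, where $\mathcal{T}^{\rm me}$ is continuous by Proposition \ref{Pro3}.

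\textbf{Main obstacle.} The one delicate point is that continuity of $\mathcal{T}^{\rm me}$ is stated only for its restriction to $\mathbf{B}^{\ast}_1$. I therefore need $\mathcal{G}^{\rm br}(v_n)$ to converge inside $\mathbf{B}^{\ast}_1$ whenever $v_n \to v$ in $\mathbf{B}^{\rm reg}_1$, and this holds because Step 2 places every term, not just the limit, in $\mathbf{B}^{\ast}_1$. Concretely, for $w_n \to w$ in $\mathbf{M}^{\rm reg}_1$:
\begin{itemize}
\item $v_n \coloneqq \mathcal{T}^{\rm br}(w_n) \to v \coloneqq \mathcal{T}^{\rm br}(w)$ in $\mathbf{B}^{\rm reg}_1$, by Step 1;
\item $\mathcal{G}^{\rm br}(v_n) \to \mathcal{G}^{\rm br}(v)$ within $\mathbf{B}^{\ast}_1$, by Proposition \ref{Pro2} and Step 2;
\item $\mathcal{G}^{\rm me}(w_n) = \mathcal{T}^{\rm me}(\mathcal{G}^{\rm br}(v_n)) \to \mathcal{T}^{\rm me}(\mathcal{G}^{\rm br}(v)) = \mathcal{G}^{\rm me}(w)$, by Proposition \ref{Pro3} and Lemma \ref{lemma9}.
\end{itemize}
If the composition identity in Lemma \ref{lemma9} needed more care, I would fall back on the other identity there, $\mathcal{T}^{\rm br}\circ\mathcal{G}^{\rm me} = \mathcal{G}^{\rm br}\circ\mathcal{T}^{\rm br}$ on $\mathbf{M}_1$, and apply $\mathcal{T}^{\rm me}$ to both sides using $\mathcal{T}^{\rm me}\circ\mathcal{T}^{\rm br} = \mathrm{id}$ on $\mathbf{M}^{\ast}_1$. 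That identity requires $\mathcal{G}^{\rm me}(w) \in \mathbf{M}^{\ast}_1$, which follows from $\mathcal{H}^{\rm me}(w) \in \mathbf{M}^{\ast}_{u_1^{\rm me}(w)}$ together with scaling.
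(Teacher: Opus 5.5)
Your continuity argument is the paper's own. You factor $\mathcal{G}^{\rm me}=\mathcal{T}^{\rm me}\circ\mathcal{G}^{\rm br}\circ\mathcal{T}^{\rm br}$ by Lemma~\ref{lemma9}, use $\mathbf{M}^{\rm reg}_1\subset\mathbf{M}^{\ast}_1$ (Remark~\ref{Remark9}) and Proposition~\ref{Pro3} for the outer maps, and Proposition~\ref{Pro2} for the middle one. Your Steps 1 and 2 correctly supply the membership facts the paper leaves implicit. Lemma~\ref{lemma6} in fact only needs the \emph{limit} $\mathcal{G}^{\rm br}(v)$ to lie in $\mathbf{B}^{\ast}_1$, so your ``main obstacle'' is milder than you treat it.

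The gap is in the measurability half. Saying a continuous map is measurable for the trace $\sigma$-algebra on $\mathbf{M}^{\rm reg}_1$ is true, but it adds nothing beyond continuity. What the paper means by ``(Borel) measurable'' here, as in the proofs of Propositions~\ref{Pro3} and~\ref{Pro2}, is that the domain $\mathbf{M}^{\rm reg}_1$ is a Borel subset of $\mathbf{C}([0,1],\mathbb{R})$. Then the extension by zero prescribed in the introduction is a Borel map on the whole path space. This is needed later to make $\{\overleftarrow{B}^{\rm me}\in\mathbf{M}^{\rm reg}_1\}$ an event and to apply \eqref{eq1} to the functional $G'\circ\mathcal{T}^{\rm br}\circ\mathcal{G}^{\rm me}$. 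It is not automatic: $\mathcal{G}^{\rm me}(w)$ is strictly positive on $(0,1]$ for $w\in\mathbf{M}^{\rm reg}_1$, so the zero extension is Borel exactly when $\mathbf{M}^{\rm reg}_1$ is.

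The paper closes this with an argument you already have the ingredients for. By your Step 1, $\mathbf{M}^{\rm reg}_1=\{w\in\mathbf{M}^{\ast}_1:\mathcal{T}^{\rm br}(w)\in\mathbf{B}^{\rm reg}_1\}$. Here $\mathbf{M}^{\ast}_1$ is Borel (proof of Proposition~\ref{Pro3}), $\mathcal{T}^{\rm br}$ is continuous on it, and $\mathbf{B}^{\rm reg}_1$ is Borel (proof of Proposition~\ref{Pro2}). So $\mathbf{M}^{\rm reg}_1$ is the continuous preimage of a Borel set inside a Borel set, hence Borel. Add that and your proof is complete.
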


\begin{remark} \label{Remark3}
From Remark \ref{Remark2} and Corollary \ref{corollary5}, we have that $\mathbb{P}(\overleftarrow{B}^{\rm me} \in \mathbf{M}^{\rm reg}_{1})=1$. Furthermore, by Lemma \ref{lemma8}, $\mathcal{G}^{\rm br}(B^{\rm br}) = \mathcal{T}^{\rm br} \circ \mathcal{G}^{\rm me}(\overleftarrow{B}^{\rm me})$. 
\end{remark}

\begin{corollary} \label{corollary1}
The map $\mathcal{T}^{\rm br} \circ \mathcal{G}^{\rm me}: \mathbf{M}^{\rm reg}_{1} \to  \mathbf{C}([0,1], \mathbb{R})$ is continuous with respect to the uniform topology and (Borel) measurable.
\end{corollary}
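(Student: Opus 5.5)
The plan is to write $\mathcal{T}^{\rm br} \circ \mathcal{G}^{\rm me}$ as a composition of two maps that are already known to be continuous: $\mathcal{G}^{\rm me}$ on $\mathbf{M}^{\rm reg}_1$ (Proposition \ref{Pro4}) and $\mathcal{T}^{\rm br}=\mathcal{T}^{\rm br}_1$ on $\mathbf{M}_1^{\ast}$ (Proposition \ref{Pro3}). The composition only makes sense, and inherits continuity, once we know that $\mathcal{G}^{\rm me}$ maps $\mathbf{M}^{\rm reg}_1$ into $\mathbf{M}_1^{\ast}$. Continuity of a composition is then immediate, since the relevant topologies are the subspace topologies induced by the uniform topology.

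The main step, and the only real obstacle, is this inclusion $\mathcal{G}^{\rm me}(\mathbf{M}^{\rm reg}_1)\subseteq \mathbf{M}_1^{\ast}$. I would first show that $\mathbf{M}^{\rm reg}_1 \subseteq \mathbf{M}_1^{\ast}$. Let $w \in \mathbf{M}^{\rm reg}_1$. Then $\mathcal{T}^{\rm br}(w)\in\mathbf{B}^{\rm reg}_1$, and property \ref{RegPro1} applied at the rational time $t=1$ shows that $\mathcal{T}^{\rm br}(w)$ has a unique maximizer, so $\mathcal{T}^{\rm br}(w)\in \mathbf{B}_1^{\ast}$. By Remark \ref{Remark9}, $\mathcal{T}^{\rm me}(\mathcal{T}^{\rm br}(w)) \in \mathbf{M}_1^{\ast}$. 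I would then check directly from \eqref{eq49} and the definition of $\mathcal{T}^{\rm me}_{a}$ that $\mathcal{T}^{\rm me}\circ\mathcal{T}^{\rm br}(w) = w$ for every $w\in \mathbf{M}_1$; this uses that the maximizer of $\mathcal{T}^{\rm br}(w)$ is $\gamma_{w(1)/2}$. Hence $w\in\mathbf{M}_1^{\ast}$.

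With this in hand, the discussion after \eqref{eq48} gives $u^{\rm me}_1(w)\in(0,1]$ and $\mathcal{H}^{\rm me}(w)\in \mathbf{M}^{\ast}_{u_1^{\rm me}(w)}$. The scaling map $\mathcal{S}_{a}$ fixes the value $0$ at time $0$ and maps the unique argmax $a$ to the unique argmax $1$. Therefore $\mathcal{G}^{\rm me}(w)=\mathcal{S}_{u_1^{\rm me}(w)}\circ \mathcal{H}^{\rm me}(w)\in \mathbf{M}^{\ast}_1$, which is the required inclusion. (Alternatively, Lemma \ref{lemma9} gives $\mathcal{G}^{\rm me}(w)=\mathcal{T}^{\rm me}\circ\mathcal{G}^{\rm br}(\mathcal{T}^{\rm br}(w))$. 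Since $\mathcal{G}^{\rm br}$ of a function in $\mathbf{B}_1^{\ast}$ lies in $\mathbf{B}_1^{\ast}$, by the uniqueness of the maximum of $\mathcal{H}^{\rm br}$ noted in Section \ref{Sec:Excisingbridges}, Remark \ref{Remark9} yields the same conclusion.)

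Finally, measurability follows from continuity. A continuous map on the subspace $\mathbf{M}^{\rm reg}_1$ is measurable with respect to the trace Borel $\sigma$-algebra on $\mathbf{M}^{\rm reg}_1$. If one wants a Borel map on all of $\mathbf{C}([0,1],\mathbb{R})$, extend it by the zero function off $\mathbf{M}^{\rm reg}_1$, exactly as in the convention adopted in the Introduction. This is legitimate once $\mathbf{M}^{\rm reg}_1$ is known to be measurable (or contained in a Borel set of full $\mathbb{P}$-measure for $\overleftarrow{B}^{\rm me}$, cf. Remark \ref{Remark3}), in the same way it was handled for Propositions \ref{Pro2} and \ref{Pro4}.
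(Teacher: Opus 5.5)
Your proposal is correct and follows the paper's route: the paper's proof simply composes Propositions \ref{Pro3} and \ref{Pro4}. Your check that $\mathcal{G}^{\rm me}(\mathbf{M}^{\rm reg}_1)\subseteq \mathbf{M}_1^{\ast}$, via $\mathbf{M}^{\rm reg}_1\subseteq\mathbf{M}^{\ast}_1$ and the observation after \eqref{eq48}, makes explicit a step the paper leaves implicit; for measurability, the Borel property of $\mathbf{M}^{\rm reg}_1$ is already established in the proof of Proposition \ref{Pro4} (Appendix \ref{AProofofPro4}), so your fallback to a full-measure Borel superset is unnecessary.
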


\begin{proof}
It follows from Propositions \ref{Pro3} and \ref{Pro4}. 
\end{proof}

\begin{remark} \label{Remark5}
Recall that $F^{\rm br}_{x}$ denotes the Brownian first passage bridge of length $1$ from $0$ to $x>0$, defined as a measurable functional of the canonical Brownian motion. Then $\mathbb{P}(F_x^{\rm br}\in \mathbf M_1^\ast)=1$. In particular, $\mathbb{P}$-almost surely: $F_x^{\rm br}(0)=0$, $F_x^{\rm br}(1)=x$, the path is continuous,
and $F_x^{\rm br}(t)<x$ for all $t\in[0,1)$. Furthermore, by~\eqref{eq1} and Remark~\ref{Remark3}, it follows that $\mathbb{P}(F_x^{\rm br}\in \mathbf M_1^{\rm reg})=1$. 
\end{remark}

\subsection{Excising excursions from continuous functions} \label{sec:excisecontinuousfunct}

In this section, we define a transformation similar to $\mathcal{H}^{\rm me}$ and $\mathcal{G}^{\rm me}$ that excises excursions from functions in  $\mathbf{C}([0,\infty), \mathbb{R})$ below their past maximum. 

For $w \in \mathbf{C}([0,\infty), \mathbb{R})$, let $\bar{w}(t)  \coloneqq \max_{0 \leq s \leq t} w(s)$, for $t \geq 0$. For $y > 0$, let $\gamma_{y} \coloneqq \inf\{s\geq 0: w(s)=y\}$. In this section, $\inf \emptyset = \infty$. Define also 
\begin{align}
g_{t}(w) \coloneqq \sup \{ s \leq t: \bar{w}(s) - w(s) = 0\} \quad \text{and}
\quad  d_{t}(w) \coloneqq \inf \{ s >t: \bar{w}(s)-w(s) = 0\},
\end{align}

For $a>0$, let $\mathbf{C}_{a} \coloneqq \{ w \in \mathbf{C}([0,\infty), \mathbb{R}): w(0) = 0 \, \, \text{and} \, \, \gamma_{a} < \infty\}$. Then, for $w \in \mathbf{C}_{a}$, define
\begin{align}
r_{t}(w)  \coloneqq 
\max_{g_{t}(w) \leq s \leq d_{t}(w)} (\bar{w}(s) - w(s)),  \quad \text{for} \quad t \in [0, \gamma_{a}],
\end{align}
\begin{align} \label{eq45}
u_{t}(w) \coloneqq \begin{cases}
\int_{0}^{t} \mathbf{1}_{\{r_{s}(w) < \bar{w}(s) \}} {\rm d} s, & \quad \text{for} \quad t \in [0, \gamma_{a/2}], \\
\int_{0}^{\gamma_{a/2}} \mathbf{1}_{\{r_{s}(w) < \bar{w}(s) \}} {\rm d} s + \int_{\gamma_{a/2}}^{t} \mathbf{1}_{\{r_{s}(w) < \bar{w}(s) -a/2 \}} {\rm d} s,  & \quad \text{for} \quad t \in [\gamma_{a/2}, \gamma_{a}],
\end{cases}
\end{align}
\noindent and
\begin{align}
\alpha_{s}(w) = \alpha_{s} \coloneqq \inf\{t \in [0, \gamma_{a}]: u_{t}(w) > s \}, & \quad \text{for} \quad s \in [0, u_{\gamma_{a}}(w)],
\end{align}
\noindent with the convention $\alpha_{u_{\gamma_{a}}(w)}(w) = \alpha_{u_{\gamma_{a}}(w)} \coloneqq \gamma_{a}$. Finally, define
\begin{align} \label{eq46}
\mathcal{H}_{a}(w)\coloneqq ( w(\alpha_{t}), t \in [0,u_{\gamma_{a}}(w)]).
\end{align}

Observe that since $w \in \mathbf{C}_{a}$, we have $u_{\gamma_{a}}(w) \in (0, \gamma_{a}]$, $w(\alpha_{u_{\gamma_{a}}(w)}) = w(\gamma_{a}) = a>0$ and $\mathcal{H}_{a}(w) \in \mathbf{M}_{u_{\gamma_{a}}(w)}$. For $a>0$ fixed, define $\mathcal{G}_{a}:  \mathbf{C}_{a}  \rightarrow \mathbf{C}([0,1], \mathbb{R})$ by letting $\mathcal{G}_{a}(w) \coloneqq \mathcal{S}_{u_{\gamma_{a}}(w)} \circ \mathcal{H}_{a}(w)$, for $w \in \mathbf{C}_{a}$. If $a=1$, we write simply $\mathcal{G}$ instead of $\mathcal{G}_{1}$. The following property follow directly from our definitions and its proof is omitted.

\begin{lemma} \label{lemma11}
For $w \in \mathbf{C}_{a}$, $\mathcal{T}^{\rm br}_{u_{\gamma_{a}}(w)} \circ \mathcal{G}_{a}(w) = \mathcal{S}_{u_{\gamma_{a}}(w)}  \circ \mathcal{T}^{\rm br}_{u_{\gamma_{a}}(w)} \circ \mathcal{H}_{a}(w)$. 
\end{lemma}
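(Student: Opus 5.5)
The identity is a pure commutation relation between the Brownian scaling map and the bridge transformation, so I would prove it by a direct pointwise computation. Since $\mathcal{G}_{a}(w)$ lives on $[0,1]$, the map applied to it on the left-hand side must be the length-one map $\mathcal{T}^{\rm br} = \mathcal{T}^{\rm br}_{1}$; I read the statement this way. Fix $w \in \mathbf{C}_{a}$ and write $b \coloneqq u_{\gamma_{a}}(w)$ and $v \coloneqq \mathcal{H}_{a}(w)$. By the observation preceding the definition of $\mathcal{G}_{a}$, we have $b \in (0,\gamma_{a}]$ and $v \in \mathbf{M}_{b}$ with $v(b)=a$, and by definition $\mathcal{G}_{a}(w) = \mathcal{S}_{b}(v)$. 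It therefore suffices to prove the general claim: for every $b>0$ and $v \in \mathbf{M}_{b}$,
\begin{align*}
\mathcal{T}^{\rm br} \circ \mathcal{S}_{b}(v) = \mathcal{S}_{b} \circ \mathcal{T}^{\rm br}_{b}(v).
\end{align*}

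First I will check that $\tilde v \coloneqq \mathcal{S}_{b}(v)$ lies in $\mathbf{M}_{1}$, so that $\mathcal{T}^{\rm br}$ applies to it. We have $\tilde v(0)=0$ and $\tilde v(1) = b^{-1/2}v(b)$. Since $\tilde v(t) = b^{-1/2} v(bt)$ is a positive multiple of a time-changed copy of $v$, the maximum of $\tilde v$ over $[0,1]$ equals $b^{-1/2}\max_{[0,b]} v = \tilde v(1)$.

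Next I will relate the half-level hitting times. Let $\gamma \coloneqq \inf\{s\in[0,b]: v(s)=v(b)/2\}$ and $\tilde\gamma \coloneqq \inf\{s\in[0,1]: \tilde v(s)=\tilde v(1)/2\}$. The condition $\tilde v(s) = \tilde v(1)/2$ is equivalent to $v(bs) = v(b)/2$, so $\tilde\gamma = \gamma/b$.

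Then I will compare the two sides piece by piece using \eqref{eq49}.
\begin{itemize}
\item For $t\in[0,\tilde\gamma]$: the left side is $\tilde v(t) = b^{-1/2}v(bt)$. Since $bt\in[0,\gamma]$, the right side is $b^{-1/2}v^{\rm br}(bt) = b^{-1/2}v(bt)$, so the two agree.
\item For $t\in[\tilde\gamma,1]$: the left side is
\begin{align*}
\tilde v(1+\tilde\gamma-t)-\tilde v(\tilde\gamma) = b^{-1/2}\bigl(v(b+\gamma-bt) - v(\gamma)\bigr).
\end{align*}
Since $bt\in[\gamma,b]$, this is exactly $b^{-1/2} v^{\rm br}(bt)$, the right side.
\end{itemize}
This proves the claim, and hence the lemma after substituting $b = u_{\gamma_{a}}(w)$ and $v=\mathcal{H}_{a}(w)$.

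There is no real obstacle here. The only points needing care are the interpretation of the subscript on $\mathcal{T}^{\rm br}$ in the statement and the fact that $b>0$, which is needed for $\mathcal{S}_{b}$ to be defined. The same computation also yields Lemma \ref{lemma9} with $\mathcal{H}^{\rm me}$ in place of $\mathcal{H}_{a}$.
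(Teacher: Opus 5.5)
Your proof is correct, and it is exactly the direct definitional check the paper intends; the paper omits the proof, saying the identity follows directly from the definitions. With $b=u_{\gamma_{a}}(w)>0$ and $v=\mathcal{H}_{a}(w)\in\mathbf{M}_{b}$, the half-level hitting time rescales as $\tilde\gamma=\gamma/b$, and the two branches of $\mathcal{T}^{\rm br}$ match those of $\mathcal{T}^{\rm br}_{b}$ under $s=bt$. Your reading of the left-hand map as $\mathcal{T}^{\rm br}=\mathcal{T}^{\rm br}_{1}$ is the intended one: this is how the lemma is used later, on $\{T_{x}=1\}$, to identify $\mathcal{T}^{\rm br}\circ\mathcal{G}^{\rm me}$ of the Brownian path with $\tilde{X}^{\rm br}_{x}$. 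Your closing aside slightly overstates: the same computation gives only the first identity of Lemma \ref{lemma9}, not its other two claims.
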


\begin{remark} \label{Remark8}
For $w \in \mathbf{C}_{a}$ with $\gamma_{a} = 1$, let $w\vert_{[0,1]}\coloneqq (w(t), t\in[0,1])$ denote the restriction of $w$ to the interval $[0,1]$. In particular, $w\vert_{[0,1]}\in \mathbf{M}_{1}$, $\mathcal{H}_{1}(w) = \mathcal{H}^{\rm me}(w\vert_{[0,1]})$, $\mathcal{G}(w) = \mathcal{G}^{\rm me}(w\vert_{[0,1]})$ and $u_{1}(w) = u_{1}^{\rm me}(w\vert_{[0,1]})$. 
\end{remark}

\begin{remark} \label{Remark6}
For every $a >0$, $\mathbb{P}(B \in \mathbf{C}_{a} )=1$, where $B$ is the canonical Brownian motion. 
\end{remark}

\section{Excising Excursions from the standard Brownian motion} \label{sec:excise excursion} 

In this section, we describe the excursion structure of $(B(t), t \in [0, T_{x}])$, for $x > 0$, and define a process, similar to $Y^{\rm br}$, by excising the excursions of $(B(t), t \in [0, T_{x}])$ below its past maximum process. Recall that $T_{x} \coloneqq \inf \{t \geq 0: B(t)=x \}$.

Let $(\bar{B}(t), t \geq 0)$ be the past maximum process of $B$, i.e., $\bar{B}(t) \coloneqq \max_{0 \leq s \leq t} B(s)$, for $t \geq 0$. The excursions of $\bar{B}-B$ away from $0$ correspond to those of $B$ below its continuously increasing past maximum process $\bar{B}$. Specifically, each excursion of $\bar{B}-B$ away from $0$ is associated with a unique level $\ell$, which is the constant value of $\bar{B}$ throughout the duration of the excursion. The duration of this excursion is given by $\zeta_{\ell}=T_{\ell}-T_{\ell-}$. Moreover, an excursion at level $\ell$ has height equals to $h_{\ell} = \ell-\min_{T_{\ell-} \leq s \leq T_{\ell}} B(s)$.

Fix $x > 0$ and excise some of the excursions of $(B(t), t \in [0, T_{x}])$ below its past maximum, according to the following two rules: 
\begin{enumerate}[label=(\textbf{R.\arabic*})]
  \item\label{rule1} An excursion at level $\ell \in (0,x/2)$ of $(B(t), t \in [0, T_{x/2}])$ below its past maximum is excised if and only if its height $h_{\ell}$ satisfies $h_{\ell} \geq \ell$ (i.e., the associated piece of trajectory of $(B(t), t \in [0, T_{x/2}])$ hits zero during such an excursion interval). 
  \item\label{rule2} An excursion of at level $\ell \in [x/2,x)$ of $(B(t), t \in [T_{x/2}, T_{x}])$ is excised if and only if its height $h_{\ell}$ satisfies $h_{\ell} \geq \ell-x/2$ (i.e., the associated piece of trajectory of $(B(t)-x/2, t \in [T_{x/2}, T_{x}])$ hits zero during such an excursion interval). 
\end{enumerate}

Let $\tau_{x}$ be the total length of the non-excised excursions of $(B(t), t \in [0, T_{x}])$, and let $(X_{x}(t), t \in [0, \tau_{x}])$ be the resulting process obtained by joining the non-excised excursions together. Specifically, 
\begin{align} \label{eq43}
\tau_{x} = u_{T_{x}}(B)  \quad \text{and} \quad (X_{x}(t), t \in [0, \tau_{x}]) = \mathcal{H}_{x}(B),
\end{align}
\noindent where $u_{T_{x}}(\cdot)$ and $\mathcal{H}_{x}(\cdot)$ are defined in \eqref{eq45} and \eqref{eq46}, respectively. In particular, $\tau_{x} \in (0, T_{x}]$ $\mathbb{P}$-almost surely. (Recall Remark \ref{Remark6}.)

\begin{remark}
It is worth noting that Pitman and Yor \cite[Section 7.4]{Pitman2003} considered the excision of excursions of $(B(t), t \geq 0)$ below its past maximum whose height $h_{\ell}$ satisfies $h_{\ell} \geq \ell$. 
\end{remark}

On the other hand, the L\'evy-It\^{o} excursion theory (see, e.g., \cite[Chapter XII]{Yor1999}) allows us to decompose a Brownian path into a Poisson point process (PPP) of excursions and then reconstruct the original path from them, yielding another construction of $X_{x}$ and $\tau_{x}$. Recall that $\mathbf{C}([0,\infty), \mathbb{R})$ denotes the space of all continuous functions from $[0,\infty)$ to $\mathbb{R}$, equipped with the topology of uniform convergence on every compact subset of $[0,\infty)$. For $w \in \mathbf{C}([0,\infty), \mathbb{R})$, we set $\zeta(w) \coloneqq \inf\{t >0: w(t)=0 \}$ (where $\sup \emptyset = 0$ by convention) and
\begin{align}
\mathcal{E} \coloneqq \{ w \in \mathbf{C}([0,\infty), \mathbb{R}): w(0) = 0, \, 0 < \zeta(w) < \infty \, \, \text{and} \, \, w(t)=0 \, \, \text{for all} \, \, t \geq \zeta(w)\}. 
\end{align}
\noindent Let $\partial$ be the function that is identically $0$ and set $\mathcal{E}_{\partial} = \mathcal{E} \cup \{ \partial\}$. It is well-known that there is a PPP on $(0, \infty) \times \mathcal{E}_{\partial}$,
\begin{align}
\mathcal{N}(\cdot) \coloneqq \sum_\ell \delta_{((\ell,e_\ell) \in \cdot)},
\end{align}
\noindent where the sum is over the random countable set of $\ell$ with $T_{\ell}-T_{\ell-}>0$, with intensity measure ${\rm d}t \otimes \mathbf{n}({\rm d} e)$, where $\mathbf{n}({\rm d} e)$ is a $\sigma$-finite measure on $\mathcal{E}_{\partial}$ known as the It\^{o} excursion measure. 

Note that, for a fixed $x >0$, the structure of the excursions of $(B(t), t \in [0, T_{x}])$ is described by the PPP $\mathcal{N}$ restricted to $(0,x) \times \mathcal{E}_{\partial}$. In particular, thinning $\mathcal{N}$ according to rules \ref{rule1} and \ref{rule2} yields independent PPPs, $\mathcal{N}_{1,\mathrm{n}}$ and $\mathcal{N}_{2,\mathrm{n}}$ on $(0,x/2) \times \mathcal{E}_{\partial}$ and $[x/2,x) \times \mathcal{E}_{\partial}$, respectively. These are the PPPs of non-excised excursions of $(B(t), t \in [0, T_{x}])$. By the Poisson reconstruction principle, concatenating these excursions in chronological order recovers the path $(X_x(t), t \in [0,\tau_x])$; cf.\ \cite[Proposition 2.5 in Chapter XII]{Yor1999}. 
Furthermore, $\tau_x$ can also be recover from $\mathcal{N}$, this is discussed in detail in the next section, see \eqref{eq17}. 

\subsection{The law of $\tau_{x}$} \label{sec:lengthofnon-excisedexc}

We consider the PPP derived from the heights and lengths of Brownian excursions below its past maximum process. Specifically, let $N$ be the PPP obtained as the image of $\mathcal{N}$ under the mapping $(\ell,e)\mapsto(\ell,\zeta(e),\bar{e})$, where $\bar{e} \coloneqq \sup_{0 \leq s \leq \zeta(e)} e(s)$. It is well-known (see, e.g., \cite{Biane1987} or \cite[Proposition 12]{Pitman2003}) that 
\begin{align} \label{eq33}
N(\cdot) \coloneqq \sum_{\ell} \delta_{((\ell, \zeta_{\ell}, h_{\ell})\in \cdot)},
\end{align}
\noindent where the sum is over the random countable set of $\ell$ with $T_{\ell}-T_{\ell-}>0$, is a PPP on $(0, \infty) \times (0, \infty) \times (0, \infty)$ with intensity measure ${\rm d} \ell \mu({\rm d} v {\rm d} m)$ where $(v, m)$ denotes a generic value of $(\zeta_{\ell}, h_{\ell})$,
\begin{equation}
\mu({\rm d} v {\rm d} m) =\frac{{\rm d}  v}{\sqrt{2 \pi} v^{3 / 2}} \mathbb{P}\left(\sqrt{v} \bar{B}^{\rm exc} \in {\rm d} m\right),
\end{equation}
\noindent $(B^{\rm exc}(t), t \in [0,1])$ is the (normalized) Brownian excursion and $\bar{B}^{\rm exc}  = \max_{0 \leq s \leq 1} B^{\rm exc}(s)$. In particular, for a fixed $x >0$, the structure of the heights and lengths of the excursions of $(B(t), t \in [0, T_{x}])$ is described by the PPP $N$ restricted to $(0,x) \times (0, \infty) \times (0, \infty)$.

Define 
\begin{align}
N_{1, {\rm n}}(\cdot) \coloneqq \sum_{\ell} \delta_{((\ell, \zeta_{\ell}, h_{\ell})\in \cdot)} \mathbf{1}_{\{ \ell < x/2, h_{\ell} < \ell \}} \quad \text{and} \quad N_{2, {\rm n}}(\cdot) \coloneqq \sum_{\ell} \delta_{((\ell, \zeta_{\ell}, h_{\ell})\in \cdot)} \mathbf{1}_{\{ x/2 \leq \ell < x , h_{\ell} < \ell-x/2 \}}.
\end{align}
\noindent 
Clearly, $N_{1, {\rm n}}$ and $N_{2, {\rm n}}$ are two independent PPPs on $(0, \infty) \times (0, \infty) \times (0, \infty)$ with intensity measures 
\begin{align}
{\rm d} \ell \mu({\rm d} v {\rm d} m) \mathbf{1}_{\{ \ell < x/2, m < \ell \}}  \quad \text{and} \quad {\rm d} \ell \mu({\rm d} v {\rm d} m) \mathbf{1}_{\{ x/2 \leq \ell < x , m < \ell-x/2 \}},
\end{align}
\noindent respectively. The heights and lengths of the non-excised excursions of $(B(t), t \in [0, T_{x/2}])$ are described by the PPP $N_{1, {\rm n}}$, while the heights and lengths of the non-excised excursions of $(B(t), t \in [T_{x/2}, T_{x}])$ are described by the PPP $N_{2, {\rm n}}$. In particular, the total length of the non-excised excursions $\tau_{x}$ according to rules \ref{rule1}-\ref{rule2} satisfies
\begin{align} \label{eq17}
\tau_{x} = \int_{(0,\infty)} s N_{\rm n}({\rm d}s),
\end{align}
\noindent where
\begin{align} \label{eq6}
N_{\rm n}(\cdot) \coloneqq N_{1, {\rm n}}((0, \infty) \times \cdot \times (0, \infty)) +  N_{2, {\rm n}}((0, \infty) \times \cdot \times (0, \infty))
\end{align}
\noindent is a PPP on $(0, \infty)$ with intensity measure $\nu_{\rm n}$ given by
\begin{align}
\nu_{\rm n}({\rm d} s)  = \frac{\sqrt{2}}{ \sqrt{ \pi s}} \int_{0}^{\frac{x}{2 \sqrt{s}}}   \mathbb{P}(\bar{B}^{\rm exc} \leq m ) {\rm d} m {\rm d} s.
\end{align}

Let $(R_{3}(t), t \geq 0)$ and  $(\hat{R}_{3}(t), t \geq 0)$ be two independent ${\rm BES}(3)$ processes started from $R_{3}(0)=\hat{R}_{3}(0)=0$, defined in some probability space $(\Omega^{\prime}, \mathcal{F}^{\prime}, \mathbf{P})$. Let $H_{y} \coloneqq \inf\{t \geq 0: R_{3}(t) = y \}$ and $\hat{H}_{y} \coloneqq \inf\{t \geq 0: \hat{R}_{3}(t) = y \}$ be the first hitting times of $y \geq 0$ of $R_{3}$ and $\hat{R}_{3}$, respectively. 

\begin{lemma} \label{lemma3}
For $x > 0$, we have that $\tau_{x} \stackrel{d}{=} H_{x/2} + \hat{H}_{x/2}$.
\end{lemma}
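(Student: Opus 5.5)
We will compute Laplace transforms and match them. From \eqref{eq17}, $\tau_x$ is the sum of the atoms of the PPP $N_{\rm n}$, which is the superposition of the two independent PPPs $N_{1,{\rm n}}$ and $N_{2,{\rm n}}$. The exponential formula for Poisson point processes therefore gives, for $\lambda>0$,
\begin{align*}
\mathbb{E}[e^{-\lambda \tau_x}] = \exp\Big(-\int_0^{x/2}{\rm d}\ell \int \mu({\rm d}v\,{\rm d}m)\mathbf{1}_{\{m<\ell\}}(1-e^{-\lambda v}) - \int_{x/2}^{x}{\rm d}\ell \int \mu({\rm d}v\,{\rm d}m)\mathbf{1}_{\{m<\ell-x/2\}}(1-e^{-\lambda v})\Big).
\end{align*}
Substituting $\ell' = \ell - x/2$ in the second integral shows that it equals the first one. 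Hence $\mathbb{E}[e^{-\lambda\tau_x}] = \Phi_\lambda(x/2)^2$, where
\begin{align*}
\Phi_\lambda(y) \coloneqq \exp\Big(-\int_0^{y}{\rm d}\ell\int\mu({\rm d}v\,{\rm d}m)\mathbf{1}_{\{m<\ell\}}(1-e^{-\lambda v})\Big).
\end{align*}
Since $H_{x/2}$ and $\hat H_{x/2}$ are independent and identically distributed, it suffices to show $\Phi_\lambda(y)=\mathbf{E}[e^{-\lambda H_y}]=\sqrt{2\lambda}\,y/\sinh(\sqrt{2\lambda}\,y)$.

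There are two routes, and I would take the first. It uses Pitman--Yor \cite[Theorem 15]{Pitman2003} directly: excising from $(B(t),t\in[0,T_y])$ all excursions below the past maximum with $h_\ell\ge \ell$ and concatenating the rest yields $(R_3(t),t\in[0,H_y])$. This rests on the fact that the concatenated path hits $y$ exactly when $\bar B$ reaches $y$, which holds because levels are preserved. The total length of the kept excursions is then $H_y$. By the same exponential formula, that total length has Laplace transform $\Phi_\lambda(y)$, which gives the claim.

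As a self-contained check I would also verify the identity directly. Write $\int\mu({\rm d}v\,{\rm d}m)\mathbf{1}_{\{m<\ell\}}(1-e^{-\lambda v}) = \mathbf{n}\big((1-e^{-\lambda\zeta});\ \bar e<\ell\big)$. By the Markov property at the first hitting of level $\ell$ under $\mathbf n$ (an entrance-law computation), this equals $\sqrt{2\lambda}\coth(\sqrt{2\lambda}\ell)-1/\ell$. Integrating over $\ell\in(0,y)$ gives $\log\big(\sinh(\sqrt{2\lambda}y)/(\sqrt{2\lambda}y)\big)$, which matches the Laplace transform of $H_y$.

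The main obstacle is justifying the excursion-measure formula: one needs $\mathbf{n}(1-e^{-\lambda\zeta};\,\bar e<\ell)$ explicitly. I would obtain it as $\mathbf n(1-e^{-\lambda\zeta}) - \mathbf n(\bar e\ge \ell)\,\mathbb E_\ell[e^{-\lambda T_0}\,;\,\ldots]$-type decompositions, using the known value $\mathbf n(\bar e\ge \ell)=1/\ell$ and the formula for hitting $0$ before $\ell$ started from $\ell$ (after a small shift). If that computation becomes messy, I would simply rely on the Pitman--Yor identification in the first route.
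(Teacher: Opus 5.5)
Your primary route is essentially the paper's proof. The paper also identifies the kept excursions up to $T_{x/2}$ with $H_{x/2}$ via Pitman--Yor (citing their Corollary 14 on the Poisson structure of ${\rm BES}(3)$ excursions below the maximum), obtains the second half as a copy of $\hat{H}_{x/2}$ through the level shift $\ell\mapsto\ell-x/2$ in the intensity of $N_{2,{\rm n}}$, and concludes by independence of $N_{1,{\rm n}}$ and $N_{2,{\rm n}}$; you merely package this through the exponential formula.

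Your self-contained check is also correct: $\mathbf{n}(1-e^{-\lambda\zeta};\bar e<\ell)=\sqrt{2\lambda}\coth(\ell\sqrt{2\lambda})-1/\ell$ does integrate to $\log\big(\sinh(y\sqrt{2\lambda})/(y\sqrt{2\lambda})\big)$. To derive it, though, the Markov decomposition at $T_\ell$ needs $\mathbf{n}(e^{-\lambda T_\ell};T_\ell<\zeta)=\sqrt{2\lambda}/\sinh(\ell\sqrt{2\lambda})$, not just $\mathbf{n}(\bar e\ge\ell)=1/\ell$, because the time spent before reaching $\ell$ also contributes to $\zeta$.
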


\begin{proof}
It follows from \cite[Corollary 14]{Pitman2003} that the heights and lengths of the excursions of $(R_{3}, t \in [0, H_{x/2}])$ below its past maximum are described by the PPP $N_{1, {\rm n}}$. Then,  the total length of the non-excised excursions of $(B(t), t \in [0, T_{x/2}])$ is equal in distribution to 
$H_{x/2}$. On the other hand, one can deduce (for, e.g., by using \eqref{eq6}) that the total length of the non-excised excursions of $(B(t), t \in [T_{x/2}, T_{x}])$ is equal in distribution to $\hat{H}_{x/2}$. The above implies our claim. 
\end{proof}

It is well-known (see, e.g., \cite{KentJohn1978} or \cite[(3.12)]{Pitman1996}) that, for $y >0$ and $\lambda \geq 0$,
\begin{align} \label{eq31}
\mathbf{E}[e^{-\lambda H_{y}}] = \frac{y \sqrt{2 \lambda}}{\sinh(y\sqrt{2 \lambda})}.
\end{align}
\noindent In particular, it follows from Lemma \ref{lemma3} that, for $x >0$ and $\lambda \geq 0$,
\begin{align} \label{eq16}
\mathbb{E}[e^{-\lambda \tau_{x}}] =  \left(\frac{(x \sqrt{\lambda})/\sqrt{2}}{ \sinh((x \sqrt{\lambda})/\sqrt{2})} \right)^{2}.
\end{align}
\noindent Moreover, the probability density function $f_{\tau_{x}}$ of $\tau_{x}$ is given by 
\begin{align} \label{eq32}
f_{\tau_{x}}(t) = \int_{-\infty}^{t} f_{x/2}^{(3)}(s) f_{x/2}^{(3)}(t-s) {\rm d} s, \quad \text{for} \quad t \in \mathbb{R},
\end{align}
\noindent where $f_{x/2}^{(3)}$ is the probability density function of $H_{x/2}$. 

Let us now consider the total length $\tau_{x}^{\rm e}$ of the excised excursions of $(B(t), t \in [0, T_{x}])$. It follows from \ref{rule1} and \ref{rule2}, and a similar argument to the one made before, that
\begin{align} \label{eq44}
\tau_{x}^{\rm e}= \int_{(0,\infty)} s N_{\rm e}({\rm d}s),
\end{align}
\noindent where
\begin{align} 
N_{\rm e}(\cdot) \coloneqq \sum_{\ell} \delta_{((\ell, \zeta_{\ell}, h_{\ell})\in (0,\infty) \times \cdot \times (0, \infty))} \left( \mathbf{1}_{\{ \ell < x/2, h_{\ell} \geq \ell \}} + \mathbf{1}_{\{ x/2 \leq \ell < x , h_{\ell} \geq \ell-x/2 \}}\right)
\end{align}
\noindent is a PPP on $(0, \infty)$ with intensity measure $\nu_{\rm e}$ given by
\begin{align}
\nu_{\rm e}({\rm d} s)  = \frac{\sqrt{2}}{ \sqrt{ \pi s}} \int_{0}^{\frac{x}{2 \sqrt{s}}} (1- \mathbb{P}(\bar{B}^{\rm exc} \leq m )) {\rm d} m {\rm d} s.
\end{align}

\begin{lemma} \label{lemma2}
For $x > 0$ and $\lambda \geq 0$, we have that
\begin{align} \label{eq7}
\mathbb{E}[e^{-\lambda \tau_{x}^{\rm e}}] = \left( \frac{1-e^{-x\sqrt{2\lambda}}}{x\sqrt{2\lambda}}  \right)^{2}. 
\end{align}
\noindent Moreover, the probability density function $f_{\tau_{x}^{\rm e}}$ of $\tau_{x}^{\rm e}$ is given by 
\begin{align} \label{eq8}
f_{\tau_{x}^{\rm e}}(t) = \int_{-\infty}^{t} g_{x}(s) g_{x}(t-s) {\rm d} s, \quad \text{for} \quad t \in \mathbb{R}, 
\end{align}
\noindent where the function $g_{x}$ is defined in \eqref{eq15}.
\end{lemma}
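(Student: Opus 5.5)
The plan is to compute the Laplace transform of $\tau_{x}^{\rm e}$ from the Poisson representation \eqref{eq44}, using the exponential formula. Since $N_{\rm e}$ is a PPP on $(0,\infty)$ with intensity $\nu_{\rm e}$, we have
\begin{align*}
\mathbb{E}[e^{-\lambda \tau_{x}^{\rm e}}] = \exp\Big(-\int_{(0,\infty)} (1-e^{-\lambda s})\,\nu_{\rm e}({\rm d}s)\Big).
\end{align*}
It is cleaner to go back one step and write the exponent as an integral over levels. For the first block ($\ell<x/2$) the exponent is $\int_0^{x/2}\Phi_\lambda(\ell)\,{\rm d}\ell$, and for the second block it is $\int_{x/2}^{x}\Phi_\lambda(\ell-x/2)\,{\rm d}\ell$. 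Here
\begin{align*}
\Phi_\lambda(h)\coloneqq \int \mu({\rm d}v\,{\rm d}m)(1-e^{-\lambda v})\mathbf{1}_{\{m\ge h\}} = \mathbf{n}\big[(1-e^{-\lambda\zeta})\mathbf{1}_{\{\bar e\ge h\}}\big].
\end{align*}
Both blocks therefore give the same quantity $I\coloneqq\int_0^{x/2}\Phi_\lambda(h)\,{\rm d}h$, and the Laplace transform equals $e^{-2I}$. This explains the square in \eqref{eq7}: it suffices to show $e^{-I}=(1-e^{-x\sqrt{2\lambda}})/(x\sqrt{2\lambda})$.

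To evaluate $\Phi_\lambda(h)$, split it as
\begin{align*}
\Phi_\lambda(h)=\mathbf{n}[1-e^{-\lambda\zeta}]-\mathbf{n}\big[(1-e^{-\lambda\zeta})\mathbf{1}_{\{\bar e<h\}}\big].
\end{align*}
With the normalization of $\mu$ one has $\mathbf{n}[1-e^{-\lambda\zeta}]=\sqrt{2\lambda}$. The second term is exactly the exponent for the non-excised excursions at the relevant level. Its integral over $h\in(0,x/2)$ equals $-\log \mathbf{E}[e^{-\lambda H_{x/2}}]$ by the proof of Lemma \ref{lemma3}, so by \eqref{eq31} it equals $\log\big(\sinh(y\sqrt{2\lambda})/(y\sqrt{2\lambda})\big)$ with $y=x/2$. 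Combining,
\begin{align*}
e^{-I}=e^{-y\sqrt{2\lambda}}\,\frac{\sinh(y\sqrt{2\lambda})}{y\sqrt{2\lambda}}=\frac{1-e^{-2y\sqrt{2\lambda}}}{2y\sqrt{2\lambda}},
\end{align*}
which is \eqref{eq7} once we put $2y=x$. As a consistency check, $\tau_x+\tau_x^{\rm e}=T_x$ with the two pieces independent, and the product of \eqref{eq16} and \eqref{eq7} gives $e^{-x\sqrt{2\lambda}}$, as it should.

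For the density \eqref{eq8}, I will show that $g_x$ has Laplace transform $(1-e^{-x\sqrt{2\lambda}})/(x\sqrt{2\lambda})$. Writing
\begin{align*}
\frac{1-e^{-x\sqrt{2\lambda}}}{x\sqrt{2\lambda}}=\frac1x\int_0^x e^{-a\sqrt{2\lambda}}\,{\rm d}a,
\end{align*}
each $e^{-a\sqrt{2\lambda}}$ is the Laplace transform of the $T_a$ density $a(2\pi s^3)^{-1/2}e^{-a^2/2s}$. Averaging over $a$ uniform on $(0,x)$ gives the density $\frac{1}{x\sqrt{2\pi s}}(1-e^{-x^2/2s})$, which is $g_x$; this also shows that $g_x$ is a probability density, as noted in Remark \ref{remark2}. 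Therefore $\tau_x^{\rm e}$ is distributed as the sum of two independent variables with density $g_x$, its density is the convolution $g_x*g_x$, and this gives \eqref{eq8} by uniqueness of Laplace transforms.

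The main obstacle is the bookkeeping of constants. I need to check that $\mathbf{n}[1-e^{-\lambda\zeta}]=\sqrt{2\lambda}$ under the normalization of $\mu$ (from $\int(1-e^{-\lambda v})(2\pi v^3)^{-1/2}{\rm d}v=\sqrt{2\lambda}$). I also need to confirm that the exponent for non-excised excursions below height $h$, integrated over $h\in(0,y)$, matches Pitman--Yor's identification with $H_y$ exactly as used in Lemma \ref{lemma3}. The rest of the argument is routine.
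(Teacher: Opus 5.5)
Your proof is correct and is essentially the paper's argument. The paper obtains \eqref{eq7} by dividing $\mathbb{E}[e^{-\lambda T_x}]=e^{-x\sqrt{2\lambda}}$ by the squared ${\rm BES}(3)$ transform of $\tau_x$, using $T_x=\tau_x+\tau_x^{\rm e}$ and the independence of $\tau_x$ and $\tau_x^{\rm e}$; you perform the same subtraction on the Laplace exponents via the exponential formula, which spares you from invoking that independence. For \eqref{eq8}, you identify $g_x$ as the density of $T_U$ with $U$ uniform on $(0,x)$. This is a probabilistic repackaging of the paper's subtraction of \eqref{eq13} from \eqref{eq12}, and it gives Remark \ref{remark2} for free.
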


\begin{proof}
Observe that, for $x > 0$, $\tau_{x}^{\rm e} + \tau_{x} = T_{x}$. In particular (see, e.g., \cite[Proposition 3.7 in Chapter II]{Yor1999}), we have that, for $\lambda \geq 0$,
\begin{align} \label{eq9}
\mathbb{E}[e^{-\lambda T_{x}}] = e^{-x\sqrt{2\lambda}}.
\end{align} 
\noindent On the other hand, recall that $\sinh(u) = 2^{-1}(1-e^{-2u})e^{-u}$, for $u \in \mathbb{R}$. Then, by \eqref{eq16}, we deduce that
\begin{align} \label{eq10}
\mathbb{E}[e^{-\lambda \tau_{x}}] = \left( \frac{x \sqrt{2\lambda}}{1-e^{-x\sqrt{2\lambda}}} \right)^{2} e^{-x\sqrt{2\lambda}}. 
\end{align}
\noindent Note that $\tau_{x}^{\rm e}$ and $\tau_{x}$ are independent. Then, we necessarily must have that
\begin{align} \label{eq11}
\mathbb{E}[e^{-\lambda T_{x}}] = \mathbb{E}[e^{-\lambda \tau_{x}}]\mathbb{E}[e^{-\lambda \tau_{x}^{\rm e}}].
\end{align} 
\noindent Thus, \eqref{eq7} follows from \eqref{eq9}, \eqref{eq10} and \eqref{eq11}. 

Note that, for $\lambda>0$,
\begin{align} \label{eq12}
\frac{1}{x\sqrt{2\lambda}} = \int_{0}^{\infty} \frac{1}{x\sqrt{2\pi s}}e^{-\lambda s} {\rm d} s.
\end{align}
\noindent Moreover, 
\begin{align} \label{eq13}
\frac{e^{-x\sqrt{2\lambda}}}{x\sqrt{2\lambda}} = \int_{0}^{\infty} \frac{1}{x\sqrt{2\pi s}}e^{-\frac{x^{2}}{2s}}e^{-\lambda s} {\rm d} s;
\end{align}
\noindent see, e.g., \cite[(3.3)]{Getoor1979}. Then, it follows from \eqref{eq12} and \eqref{eq13} that,
\begin{align} \label{eq14}
\frac{1-e^{-x\sqrt{2\lambda}}}{x\sqrt{2\lambda}} = \int_{0}^{\infty} \frac{1}{x\sqrt{2\pi s}}(1-e^{-\frac{x^{2}}{2s}})e^{-\lambda s} {\rm d} s.
\end{align}
\noindent Therefore, \eqref{eq8} follows from \eqref{eq15}, \eqref{eq7}, \eqref{eq14} and the properties of the Laplace transform. 
\end{proof}

\begin{remark} \label{remark2}
The function $g_x$ is a probability density on $(0,\infty)$. Indeed, by \eqref{eq15} and \eqref{eq14}, the Laplace transform $\mathcal{L}\{g_x\}$ of $g_{x}$ is
\begin{align} 
\mathcal{L}\{g_x\}(\lambda) = \int_0^\infty e^{-\lambda s} g_x(s) {\rm d} s = \frac{1-e^{-x\sqrt{2\lambda}}}{x\sqrt{2\lambda}}, \quad \text{for} \quad \lambda >0. 
\end{align}
\noindent By taking $\lambda\downarrow0$, we observe that $\mathcal{L}\{g_x\}(0+)=1$. Hence $\int_0^\infty g_x(s) {\rm d} s=1$. Moreover, $g_x(s) \sim (x\sqrt{2\pi s})^{-1}$ as $s \downarrow 0$, while $g_x(s) \sim x (2 s^{3/2}\sqrt{2\pi})^{-1}$ as $s \to \infty$. Thus, $g_x$ is integrable at both endpoints.
\end{remark}

\begin{remark} \label{remark1}
The ${\rm BES}(3)$ processes $(R_{3}(t), t \geq 0)$ started from $R_{3}(0)=0$ inherits the Brownian scaling property, that is,  for every $c>0$, $(c^{-1/2}R_{3}(ct), t \geq 0) \stackrel{d}{=} (R_{3}(t), t \geq 0)$.  In particular, for $y >0$, $H_{y} \stackrel{d}{=} y^{2} H_{1}$. Thus, by Lemma \ref{lemma3}, $\tau_{x} \stackrel{d}{=} (x/2)^{2} \tau_{1}$. 

On the other hand, observe from \eqref{eq15} that, for $x > 0$ and $s \in \mathbb{R}$, $g_{x}(s) = x^{-2} g_{1}(x^{-2}s)$.
\noindent Thus, it follows from Lemma \ref{lemma2} that $\tau_{x}^{{\rm e}} \stackrel{d}{=} x^{2} \tau_{1}^{{\rm e}}$ and in particular, $f_{\tau_{x}^{{\rm e}}}(s) = x^{-2} f_{\tau_{1}^{{\rm e}}}(x^{-2}s)$.
\end{remark}

\subsection{The law of $X_{x}$}

Let $(R_{3}(t), t \geq 0)$ and  $(\hat{R}_{3}(t), t \geq 0)$ be two independent ${\rm BES}(3)$ processes started from $R_{3}(0)=\hat{R}_{3}(0)=0$, defined in some probability space $(\Omega^{\prime}, \mathcal{F}^{\prime}, \mathbf{P})$. Recall that  $H_{y}$ and $\hat{H}_{y}$ denote the first hitting times of level $y \geq 0$ for $R_{3}$ and $\hat{R}_{3}$, respectively. Fix $x > 0$ and define
\begin{align} \label{eq34}
W(t)  \coloneqq \begin{cases}
R_{3}(t), & \quad \text{for} \quad   t \in [0, H_{x/2}], \\
\frac{x}{2}+\hat{R}_{3}(t-H_{x/2}), & \quad \text{for} \quad  t \in [H_{x/2}, H_{x/2} + \hat{H}_{x/2}].
\end{cases}
\end{align}

\begin{lemma} \label{lemma1}
For $x > 0$, $(X_{x}(t), t \in [0, \tau_{x}]) \stackrel{d}{=} (W(t), t \in [0, H_{x/2} + \hat{H}_{x/2}])$.
\end{lemma}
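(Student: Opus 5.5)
The proof splits the path $(B(t), t\in[0,T_x])$ at $T_{x/2}$ and treats the two pieces separately. On $[0,T_{x/2}]$ the non-excised excursions are exactly those at levels $\ell<x/2$ with height $h_\ell<\ell$. This is the Pitman--Yor excision restricted to levels below $x/2$. By \cite[Theorem 15]{Pitman2003}, concatenating the excursions of $B$ below $\bar B$ that do not reach zero yields a ${\rm BES}(3)$ process. Moreover, the portion of the original path up to $T_{x/2}$ maps to the portion of that ${\rm BES}(3)$ path up to its first hitting time of $x/2$, because the running maximum is preserved by the concatenation. Hence the first part of $X_x$, namely $(X_x(t), t\in[0,u_{T_{x/2}}(B)])$, has the law of $(R_3(t), t\in[0,H_{x/2}])$.

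For the second piece I use the strong Markov property at $T_{x/2}$. The process $B'(t)\coloneqq B(T_{x/2}+t)-x/2$ is a Brownian motion independent of $\mathcal F_{T_{x/2}}$. Rule \ref{rule2} says an excursion at level $\ell\in[x/2,x)$ is excised iff $h_\ell\ge \ell-x/2$. In terms of $B'$, this is an excursion at level $\ell'=\ell-x/2\in[0,x/2)$ with height $h\ge \ell'$, which is again the Pitman--Yor rule. Thus the second part of $X_x$, shifted down by $x/2$, equals the Pitman--Yor excision of $B'$ run until $B'$ first hits $x/2$. By the same theorem it has the law of $(\hat R_3(t), t\in[0,\hat H_{x/2}])$. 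By independence of $B'$ from $\mathcal F_{T_{x/2}}$, this is independent of the first part.

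It remains to check that $\mathcal H_x(B)$ really is the concatenation of these two pieces. I must verify that $\alpha_{u_{T_{x/2}}(B)}=T_{x/2}$ and that the time-change on $[T_{x/2},T_x]$, defined via $u$ in \eqref{eq45}, coincides with the excision map applied to $B'$. This is a deterministic bookkeeping step: for $s\ge T_{x/2}$ we have $\bar B(s)-x/2=\bar{B'}(s-T_{x/2})$, and $r_s(B)=r_{s-T_{x/2}}(B')$ because $T_{x/2}$ is a time at which $\bar B=B$. Hence the indicators in \eqref{eq45} match. The identity $X_x(u_{T_{x/2}}(B))=x/2$ then gives the gluing $W(H_{x/2})=x/2$ in \eqref{eq34}. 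Combining the two equalities in law with independence yields the lemma. It is consistent with Lemma \ref{lemma3}, since the total lengths add to $H_{x/2}+\hat H_{x/2}$.

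The main obstacle is invoking Pitman--Yor correctly on a finite horizon. Their theorem concerns the whole path, so I need the stopped statement: excising up to $T_{y}$ gives ${\rm BES}(3)$ up to $H_y$. I would derive it by noting that the excision map is adapted in the level variable. The concatenated path up to the time when its maximum first reaches $y$ depends only on the excursions at levels $<y$, that is, only on $\mathcal N$ restricted to $(0,y)\times\mathcal E_\partial$, and the whole-path identity in law then restricts to these level-indexed functionals. If needed, the same conclusion follows from \cite[Corollary 14]{Pitman2003} together with the Poisson reconstruction principle, since the excursion point process of $R_3$ below its maximum up to $H_y$ matches $N_{1,\rm n}$, as already used in Lemma \ref{lemma3}.
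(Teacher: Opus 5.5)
Your proof is correct, but it takes a different route from the paper's.

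\textbf{The paper's route.} It argues at the level of the excursion point process. The thinned PPPs $\mathcal{N}_{1,\mathrm{n}}$ and $\mathcal{N}_{2,\mathrm{n}}$ are identified, via \cite[Corollary 14]{Pitman2003}, with the excursion PPPs below the running maximum of $R_3$ and of $\hat R_3$ (the latter after shifting levels by $x/2$). The two path pieces are then recovered by Poisson reconstruction, and independence comes for free from the independence of a PPP on disjoint level sets.

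\textbf{Your route.} You argue at the path level. For the first piece you use the whole-path statement \cite[Theorem 15]{Pitman2003} plus a stopping argument. The strong Markov property at $T_{x/2}$ then does two jobs: it turns rule \ref{rule2} into the Pitman--Yor rule for the fresh Brownian motion $B'$, and it gives independence.

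\textbf{What each buys.} Your route is more explicit in two places the paper leaves implicit. It makes the level shift precise, which the paper states loosely as ``restricted to levels $[x/2,x)$'' for $\hat R_3$. It also checks that the time change of \eqref{eq45} really is the concatenation of the two pieces: for $s\ge 0$ one has exactly $\alpha_{u_{T_{x/2}}(B)+s}=T_{x/2}+\alpha'_s$, where $\alpha'$ is the inverse clock of the Pitman--Yor excision of $B'$. The paper's route needs no finite-horizon reduction, since restricting the PPP to levels below $x/2$ handles it automatically. It also fits the Poissonian arguments used elsewhere, for $\tau_x$ and for the independence of $\tau_x^{\rm e}$ and $\tilde X_x^{\rm br}$ in Section \ref{sec:proof-main}.

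\textbf{One small correction.} The gluing identity $\alpha_{u_{T_{x/2}}(B)}=T_{x/2}$ (equivalently $X_x(u_{T_{x/2}}(B))=x/2$) is not deterministic bookkeeping. The same goes for your claim that the stopped Pitman--Yor path coincides with the excision of $B$ on $[0,T_y]$. Both hold only almost surely. The reason is that kept excursions occur at levels arbitrarily close above the splitting level, because the It\^o measure of excursions of height less than any $\ell'>0$ is infinite. Hence the clock $u$ increases strictly immediately after $T_{x/2}$. You should state this explicitly rather than file it under deterministic verification.
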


\begin{proof}
Recall that in Section \ref{sec:excise excursion}, $(X_x(t), t \in [0,\tau_x])$ can be constructed using the PPP $\mathcal{N}$ of excursions of $(B_{t}, t \geq 0)$ below its past maximum process. Recall also that $\mathcal{N}_{1,\mathrm{n}}$ and $\mathcal{N}_{2,\mathrm{n}}$ denote the two independent PPPs on $(0,x/2) \times \mathcal{E}_{\partial}$ and $[x/2,x) \times \mathcal{E}_{\partial}$, respectively, obtained by thinning $\mathcal{N}$ according to rules \ref{rule1} and \ref{rule2}, respectively.

It is not difficult to see that the PPP of excursions of the $\mathrm{BES}(3)$  process $R_{3}$ below its past maximum process, restricted to levels $\ell\in (0,x/2)$, has the same law as $\mathcal{N}_{1,\mathrm{n}}$; see, e.g., \cite[Corollary 14 and its proof]{Pitman2003}. Similarly, the PPP of excursions of $\hat{R}_{3}(t)$ below its past maximum process, restricted to levels $\ell\in [x/2,x)$, has the same law as $\mathcal{N}_{2,\mathrm{n}}$. Therefore, the concatenation of non-excised excursions of $(B(t), t \in [0, T_{x}])$ up to level $x/2$ has the same law as $(R_3(t), t \in [0, H_{x/2}])$, while the concatenation of non-excised excursions from level $x/2$ up to level $x$ has the law of $(x/2+ \hat{R}_3(t),\,t \in [0, \hat{H}_{x/2}])$, independent of $(R_3(t), t \in [0, H_{x/2}])$. Therefore, the path $(X_x(t), t \in [0,\tau_x])$ has the same law as the path $W$ in \eqref{eq34}. This proves the lemma.
\end{proof}

Clearly, $X_{x}(0) = 0$, $X_{x}(t) < x$, for $t \in [0, \tau_{x})$ and $X_{x}(\tau_{x}) =x$. Moreover, $X_{x}$ has continuous trajectories. Define the process $(X_{x}^{\rm br}(t), t \in [0,1])$ by transforming $X_{x}$ into a bridge from $0$ to $0$, that is, 
\begin{align} \label{eq25}
X_{x}^{\rm br}(t)  \coloneqq \mathcal{T}^{\rm br}(X_{x}),
\end{align}
\noindent where $\mathcal{T}^{\rm br}(\cdot)$ is defined in \eqref{eq49}. Let $(\tilde{X}_{x}^{\rm br}(t), t \in [0,1])$ be the process  obtained by Brownian scaling $X_{x}^{\rm br}$ to the time interval $[0,1]$, that is,
\begin{align} \label{eq28}
\tilde{X}_{x}^{\rm br}(t) \coloneqq \tau_{x}^{-1/2} X_{x}^{\rm br}(\tau_{x} t), \quad \text{for} \quad t \in [0,1]. 
\end{align}
\noindent In particular, by \eqref{eq43} and Lemma \ref{lemma11}, $\tilde{X}_{x}^{\rm br} = \mathcal{S}_{\tau_{x}}  \circ \mathcal{T}^{\rm br}_{\tau_{x}} \circ \mathcal{H}_{x}(B)$.

\begin{lemma}[An agreement formula] \label{lemma4}
For all positive or bounded measurable function $G: \mathbf{C}([0,1], \mathbb{R}) \rightarrow \mathbb{R}$, we have that
\begin{align}
\mathbb{E}[G(B^{\rm exc})] = \frac{\sqrt{2 \pi}}{x} \mathbb{E}[G(\tilde{X}_{x}^{\rm br}) \tau_{x}^{1/2}].
\end{align}
\end{lemma}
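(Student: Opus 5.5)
We want to identify the law of $X^{\rm br}_x$ with the Itô excursion measure conditioned on its height. Then we compute one functional of the excursion in two ways: disintegrating by height (Williams) and disintegrating by length (Itô). Brownian scaling removes the dependence on the level and gives the constant $\sqrt{2\pi}/x$.

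\emph{Step 1 (identification via Lemma \ref{lemma1}).} By Lemma \ref{lemma1} and the measurability of $\mathcal{T}^{\rm br}$ (Proposition \ref{Pro3}), $(X_x,\tau_x)$ may be replaced by $(W, H_{x/2}+\hat H_{x/2})$, with $W$ as in \eqref{eq34}. Since $W$ first hits $x/2$ at time $H_{x/2}$, the definition \eqref{eq49} shows that $\mathcal{T}^{\rm br}(W)$ equals $R_3$ on $[0,H_{x/2}]$. After that it equals $\hat R_3(H_{x/2}+\hat H_{x/2}-t)$, i.e.\ the time reversal of an independent ${\rm BES}(3)$ run until it hits $x/2$. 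By Williams' decomposition of the Itô measure at its maximum (see \cite[Chapter XII]{Yor1999}), this concatenation has the law $\mathbf{n}(\,\cdot\mid \bar e = x/2)$ of an excursion conditioned to have height $x/2$, and its length is $\zeta = \tau_x$. Here $\mathbf{n}$ is the measure governing the excursions below the maximum; the paper's convention \eqref{eq33} gives $\mathbf{n}(\zeta\in{\rm d}v)={\rm d}v/(\sqrt{2\pi}v^{3/2})$, which corresponds to $\mathbf{n}(\bar e\in {\rm d}m)={\rm d}m/m^{2}$. I will double-check this normalization, since the constant in Lemma \ref{lemma4} depends on it.

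\emph{Step 2 (two disintegrations of $\mathbf{n}$).} Fix $G\ge 0$ and a test function $h\ge 0$ on $(0,\infty)$, and compute
\begin{align*}
I \coloneqq \mathbf{n}\big[G(\mathcal{S}_{\zeta}(e))\, h(\zeta)\big].
\end{align*}
Disintegrating by length: under $\mathbf{n}(\cdot\mid\zeta=v)$ the rescaled excursion $\mathcal{S}_v(e)$ is a normalized excursion $B^{\rm exc}$, which is the content of \eqref{eq33}. Hence
\begin{align*}
I=\mathbb{E}[G(B^{\rm exc})]\int_0^\infty h(v)\,\frac{{\rm d}v}{\sqrt{2\pi}v^{3/2}}.
\end{align*}
Disintegrating by height, Step 1 with $x=2m$ gives
\begin{align*}
I=\int_0^\infty \frac{{\rm d}m}{m^2}\,\mathbb{E}\big[G(\tilde X^{\rm br}_{2m})\,h(\tau_{2m})\big].
\end{align*}

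\emph{Step 3 (scaling).} By Brownian scaling of ${\rm BES}(3)$ (Remark \ref{remark1}), the pair $(\tilde X^{\rm br}_{2m},\tau_{2m})$ has the same law as $(\tilde X^{\rm br}_{2}, m^2\tau_2)$. This uses that $\mathcal{S}_{\tau}\circ\mathcal{T}^{\rm br}_\tau$ commutes with scaling.

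Substitute $v=m^2\tau_2$ inside the expectation and apply Fubini. The Jacobian gives ${\rm d}m/m^2 = \tau_2^{1/2}\,{\rm d}v/(2v^{3/2})$, so
\begin{align*}
I=\mathbb{E}\big[G(\tilde X^{\rm br}_2)\tau_2^{1/2}\big]\int_0^\infty h(v)\frac{{\rm d}v}{2v^{3/2}}.
\end{align*}
Choose any $h$ with $0<\int h(v)v^{-3/2}{\rm d}v<\infty$ and compare with Step 2. This yields
\begin{align*}
\mathbb{E}[G(B^{\rm exc})]=\frac{\sqrt{2\pi}}{2}\,\mathbb{E}\big[G(\tilde X^{\rm br}_2)\tau_2^{1/2}\big].
\end{align*}

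Finally, scaling again gives $\tilde X^{\rm br}_x\stackrel{d}{=}\tilde X^{\rm br}_2$ jointly with $\tau_x^{1/2}=(x/2)\tau_2^{1/2}$. Therefore
\begin{align*}
\frac{\sqrt{2\pi}}{x}\,\mathbb{E}\big[G(\tilde X^{\rm br}_x)\tau_x^{1/2}\big]=\frac{\sqrt{2\pi}}{2}\,\mathbb{E}\big[G(\tilde X^{\rm br}_2)\tau_2^{1/2}\big]=\mathbb{E}[G(B^{\rm exc})],
\end{align*}
which is the claim.

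The main obstacle is Step 1. It requires a careful statement of Williams' decomposition of $\mathbf{n}$ given the maximum, with the reversal handled exactly as in \eqref{eq49}, and it requires matching the normalization of $\mathbf{n}(\bar e\in{\rm d}m)$ to the length law in \eqref{eq33}. If that normalization is off by a factor of $2$, the constant comes out wrong. As a fallback I would verify the constant directly by taking $G\equiv 1$. Then the claim reads $\mathbb{E}[\tau_x^{1/2}]=x/\sqrt{2\pi}$, which can be checked from the Laplace transform \eqref{eq16} using $\mathbb{E}[\tau^{1/2}]=\frac{1}{2\sqrt{\pi}}\int_0^\infty (1-\mathbb{E}[e^{-\lambda\tau}])\lambda^{-3/2}{\rm d}\lambda$.
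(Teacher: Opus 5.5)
Your proof is correct and follows the same route as the paper. Both arguments use Lemma~\ref{lemma1} to pass to the concatenation $W$ of a ${\rm BES}(3)$ and a reversed independent ${\rm BES}(3)$; by Williams' decomposition, its bridge transform is the excursion conditioned on height $x/2$. Both then apply an agreement formula. The paper imports it from Pitman--Yor in the form $\mathbb{E}[G(B^{\rm exc})]=\sqrt{\pi/2}\,\mathbf{E}[G(\tilde W^{\rm br})(\max_{t}\tilde W^{\rm br}(t))^{-1}]$, together with $\max_{t}\tilde W^{\rm br}(t)=\tfrac{x}{2}(H_{x/2}+\hat H_{x/2})^{-1/2}$, whereas you derive it directly by disintegrating $\mathbf{n}$ by height and by length and using scaling.

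Your normalization $\mathbf{n}(\bar e\in{\rm d}m)={\rm d}m/m^{2}$ is right: integrating the intensity in \eqref{eq33} gives $\mathbf{n}(\bar e>m)=\tfrac{2}{\sqrt{2\pi}\,m}\mathbb{E}[\bar B^{\rm exc}]=1/m$, since $\mathbb{E}[\bar B^{\rm exc}]=\sqrt{\pi/2}$. Your $G\equiv 1$ check, which does give $\mathbb{E}[\tau_x^{1/2}]=x/\sqrt{2\pi}$, also fixes the constant on its own: scaling already forces $\mathbf{n}(\bar e\in{\rm d}m)\propto{\rm d}m/m^{2}$, so the rest of your argument holds up to a $G$-independent factor.
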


\begin{proof}
Define 
\begin{align}
W^{\rm br}(t)  \coloneqq \begin{cases}
W(t), & \quad \text{for} \quad   t \in [0, H_{x/2}], \\
W(2H_{x/2} + \hat{H}_{x/2}-t) - \frac{x}{2}, & \quad \text{for} \quad t \in [H_{x/2}, H_{x/2} + \hat{H}_{x/2}].
\end{cases}
\end{align}
\noindent Let $(\tilde{W}^{\rm br}(t), t \in [0,1])$ be the process  obtained by Brownian scaling $W^{\rm br}$ to the time interval $[0,1]$, that is,
\begin{align}
\tilde{W}^{\rm br}(t) = (H_{x/2} + \hat{H}_{x/2})^{-1/2} W^{\rm br}((H_{x/2} + \hat{H}_{x/2}) t), \quad \text{for} \quad t \in [0,1]. 
\end{align}
\noindent Recall that $B^{\rm exc}$ has the same law as a $3$-dimensional Bessel bridge; see, e.g., \cite[Theorem 4.2 in Chapter XII]{Yor1999}. Then, by an argument similar to the proof of \cite[Theorem 3.1]{Pitman1996} mutatis-mutandis, it can be shown that, for all positive or bounded measurable function $G: \mathbf{C}([0,1], \mathbb{R}) \rightarrow \mathbb{R}$,
\begin{align}
\mathbb{E}[G(B^{\rm exc})] = \sqrt{\frac{\pi}{2}} \mathbf{E}\left[G(\tilde{W}^{\rm br}) \Big(\max_{0 \leq t \leq 1}\tilde{W}^{\rm br}(t) \Big)^{-1} \right] = \frac{\sqrt{2 \pi}}{x} \mathbf{E}[G(\tilde{W}^{\rm br}) (H_{x/2} + \hat{H}_{x/2})^{1/2}].
\end{align}
\noindent To obtain the second identity, we used that $\max_{0\le t\le 1}\tilde W^{\rm br}(t)=\tfrac{x}{2}(H_{x/2}+\hat H_{x/2})^{-1/2}$. Therefore, our claim follows from Lemma \ref{lemma1}. 
\end{proof}

\section{Proof of the main result}\label{sec:proof-main}

In this section we prove Theorem~\ref{Theorem1}. 

\begin{proof}[Proof of Theorem \ref{Theorem1}]
Recall that $(Y^{\rm br}(t), t \in [0,\tau^{\rm br}])$ denotes the process resulting from excising the excursions of $B^{\rm br}$ below $M^{\rm br}$ that reach level $0$ and joining the remaining non-excised excursions together, and  $\tau^{\rm br}$ is the total length of all non-excised excursions of $B^{\rm br}$. Recall also that $(\tilde{Y}^{\rm br}(t), t \in [0,1])$ denotes the process obtained by Brownian scaling $Y^{\rm br}$ to the time interval $[0,1]$; see \eqref{eq24}.

The argument proceeds in three steps:
\begin{enumerate}[label=(\textbf{\roman*})]
\item \label{Step1} Using \eqref{eq4}, \eqref{eq5} and Lemma \ref{lemma8}, we link the excision and concatenation procedure of the Brownian bridge's excursions to the equivalent procedure for the time-space reversal Brownian meander; see \eqref{eq18}.

\item \label{Step2} Then, we establish the weighted expectation identity for the law of $\tilde{Y}^{\rm br}$ by conditioning on the Brownian bridge maximum (using \eqref{eq22}) and the maximum-scaling relationship \eqref{eq29} and \eqref{eq35}. This establishes a link to an equivalent excision and concatenation procedure for the Brownian first-passage bridge \eqref{eq:A2}.  

\item \label{Step3} Finally, we conclude the proof by first linking the Brownian first-passage bridge's excision and concatenation procedure to that of $(B(t), t \in [0, T_{x}])$ described in Section \ref{sec:excise excursion}, and then applying Lemma \ref{lemma4} to identify the kernel $\phi_{x}$ in \eqref{eq36}. 
\end{enumerate}

\paragraph{\ref{Step1} From the bridge to the meander.} Observe that, by \eqref{eq4} and \eqref{eq5}, the excursions of $(B^{\rm br}(t), t \in [0, \mu])$ below $(M^{\rm br}(t), t \in [0, \mu])$ are exactly those of the time-space reversal of the Brownian meander $(\overleftarrow{B}^{\rm me}(t), t \in [0, \mu])$ below its past maximum process. On the other hand, the excursions of $(B^{\rm br}(t), t \in [\mu, 1])$ below $(M^{\rm br}(t), t \in [\mu, 1])$ correspond to those of $(B^{\rm br}(1-t), t \in [0, 1-\mu])$ below its past maximum process which in turn correspond to those of $(\overleftarrow{B}^{\rm me}(t), t \in [\mu,1])$ below its past maximum process. In particular, the excised excursions of $(B^{\rm br}(t), t \in [0, \mu])$ are those of $(\overleftarrow{B}^{\rm me}(t), t \in [0, \mu])$ below its past maximum process that reach level $0$, while the excised  excursions of $(B^{\rm br}(t), t \in [\mu, 1])$ are those of $(\overleftarrow{B}^{\rm me}(t), t \in [\mu, 1])$ below its past maximum process that reach level $\overleftarrow{B}^{\rm me}(\mu)$. 

Recall the definition of the transformation $\mathcal{H}^{\rm me}(\cdot)$ in \eqref{eq48}, which excises excursions below the past maximum of the time-reversed meander-type functions. Recall also that $u_{1}^{\rm me}(\cdot)$ in \eqref{eq47} is the function that returns the length of the non-excised excursions after applying $\mathcal{H}^{\rm me}$. Then, let $\tau^{\rm me} \coloneqq u_{1}^{\rm me}(\overleftarrow{B}^{\rm me})$ be the total length of the non-excised excursions of $\overleftarrow{B}^{\rm me}$ and $(Z^{\rm me}(t), t \in [0,\tau^{\rm me}]) \coloneqq \mathcal{H}^{\rm me}(\overleftarrow{B}^{\rm me})$ be the resulting process obtained by joining the non-excised excursions together. Recall from Remark \ref{Remark3} that $\mathbb{P}(\overleftarrow{B}^{\rm me} \in \mathbf{M}_{1}^{\rm reg})=1$; in particular, $\tau^{\rm me} \in (0,1]$ $\mathbb{P}$-almost surely. Let $(\hat{Z}^{\rm me}(t), t \in [0,1])$ be the process obtained by Brownian scaling $Z^{\rm me}$ to the time interval $[0,1]$, that is,
\begin{align} 
\hat{Z}^{\rm me}(t) \coloneqq (\tau^{\rm me})^{-1/2} Z^{\rm me}(\tau^{\rm me}t), \quad \text{for} \quad t \in [0,1]. 
\end{align}
\noindent In particular, $\hat{Z}^{\rm me} = \mathcal{G}^{\rm me}(\overleftarrow{B}^{\rm me})$, where $\mathcal{G}^{\rm me}(\cdot)$ is the transformation defined in Section \ref{Sec:Excisingmeanders}. Furthermore,  recall from Lemma \ref{lemma8} and Remark \ref{Remark3}  that
\begin{align} \label{eq18}
\tilde{Y}^{\rm br} = \mathcal{T}^{\rm br}(\hat{Z}^{\rm me}) \quad \text{and} \quad \tau^{\rm br}=\tau^{\rm me},
\end{align}
\noindent where $\mathcal{T}^{\rm br}(\cdot)$ is the transformation defined in \eqref{eq49}

\paragraph{\ref{Step2} Weighted expectation via conditioning.} By \eqref{eq3} and \eqref{eq1}, conditional on $\overleftarrow{B}^{\rm me}(1)  =x$ (recall that, by \eqref{eq4}, $2B^{\rm br}(\mu) = \overleftarrow{B}^{\rm me}(1)$), for $x > 0$, the excised excursions of $(B^{\rm br}(t), t \in [0, \mu])$ are those of $(F_{x}^{\rm br}(t), t \in [0, \gamma_{x/2}^{\rm br}])$ below its past maximum process that reach level $0$, while the excised excursions of $(B^{\rm br}(t), t \in [\mu, 1])$ are those of $(F_{x}^{\rm br}(t), t \in [\gamma_{x/2}^{\rm br}, 1])$ below its past maximum process that reach level $x/2$, where $\gamma_{x/2}^{\rm br} \coloneqq \inf\{t \in [0,1]:F_{x}^{\rm br}(t) = x/2 \}$. 

Recall from Remark \ref{Remark5} that $\mathbb{P}(F_{x}^{\rm br} \in \mathbf{M}_{1}^{\rm reg})=1$. Let $\tau_{x}^{\rm br} \coloneqq u_{1}^{\rm me}(F_{x}^{\rm br})$ be the total length of the non-excised excursions of $F_{x}^{\rm br}$, as described above, and let $(Z_{x}(t), t \in [0, \tau_{x}^{\rm br}]) \coloneqq \mathcal{H}^{\rm me}(F_{x}^{\rm br})$ be the process obtained by joining the non-excised excursions together. ($\tau_{x}^{\rm br} \in (0,1]$ $\mathbb{P}$-almost surely.) Define the process $(Z_{x}^{\rm br}(t), t \in [0,1])$ by transforming  $Z_{x}$ into a bridge from $0$ to $0$, that is, 
\begin{align} \label{eq27}
Z^{\rm br}_{x}(t) = \begin{cases}
Z_{x}(t), & \quad \text{for} \quad t \in [0, \gamma_{x/2}^{\rm br}], \\
Z_{x}(\gamma_{x/2}^{\rm br}+\tau_{x}^{\rm br}-t) - x/2, & \quad \text{for} \quad t \in [\gamma_{x/2}^{\rm br}, \tau_{x}^{\rm br}].
\end{cases}
\end{align}
\noindent Note that $(Z_{x}^{\rm br}(t), t \in [0,1]) \coloneqq \mathcal{T}^{\rm br}_{u_{1}^{\rm me}(F_{x}^{\rm br})}(Z_{x})$. Let $(\tilde{Z}_{x}^{\rm br}(t), t \in [0,1])$ be the process obtained by Brownian scaling $Z_{x}^{\rm br}$ to the time interval $[0,1]$, that is,
\begin{align} \label{eq26}
\tilde{Z}_{x}^{\rm br}(t) \coloneqq (\tau^{\rm br}_{x})^{-1/2} Z_{x}^{\rm br}(\tau^{\rm br}_{x}t), \quad \text{for} \quad t \in [0,1]. 
\end{align}
\noindent Indeed, by Lemma \ref{lemma9}, $(\tilde{Z}_{x}^{\rm br}(t), t \in [0,1]) = \mathcal{T}^{\rm br} \circ \mathcal{G}^{\rm me}(F_{x}^{\rm br})$. Then, it follows from \eqref{eq1}, \eqref{eq18} and Corollary \ref{corollary1} that, for all positive or bounded measurable function $G^{\prime}: \mathbf{C}([0,1], \mathbb{R}) \rightarrow \mathbb{R}$,
\begin{align}
\mathbb{E}[G^{\prime}(\tilde{Z}_{x}^{\rm br})] & =  \mathbb{E}[ G^{\prime}(\mathcal{T}^{\rm br} \circ \mathcal{G}^{\rm me}(F_{x}^{\rm br}) ) ] \nonumber \\
& =  \mathbb{E}[ G^{\prime}(\mathcal{T}^{\rm br} \circ \mathcal{G}^{\rm me}(\overleftarrow{B}^{\rm me}) ) \mid \overleftarrow{B}^{\rm me}(1) =x] \nonumber \\
& = \mathbb{E}[ G^{\prime}(\tilde{Y}^{\rm br}) \mid 2B^{\rm br}(\mu) =x]. 
\end{align}
\noindent Furthermore, by Lemma \ref{Lemma2}, for all positive or bounded measurable function $G^{\prime}: \mathbf{C}([0,1], \mathbb{R}) \rightarrow \mathbb{R}$,
\begin{align} \label{eq22}
\int_{0}^{\infty} \mathbb{E}[G^{\prime}(\tilde{Z}_{x}^{\rm br})] x e^{-\frac{x^{2}}{2}} {\rm d} x= \mathbb{E}[ G^{\prime}(\tilde{Y}^{\rm br})]. 
\end{align}
\noindent By applying \eqref{eq22} with $G^{\prime}(f)=G(f)\,(\max_{0\le t\le1}f(t))^{-1}$, where $f \in \mathbf{C}([0,1], \mathbb{R})$ and $G: \mathbf{C}([0,1], \mathbb{R}) \rightarrow \mathbb{R}$ is a positive or bounded measurable function, we obtain that
\begin{align}\label{eq:A1}
\mathbb{E} \left[ G(\tilde{Y}^{\rm br}) \Big(\max_{0 \leq t \leq 1} \tilde{Y}^{\rm br}(t) \Big)^{-1}\right]
=\int_0^\infty \mathbb{E} \left[G(\tilde Z^{\rm br}_x) \Big(\max_{0 \leq t \leq 1}  \tilde{Z}^{\rm br}_x(t) \Big)^{-1}\right]  x e^{-\frac{x^{2}}{2}}{\rm d}x.
\end{align}

Next, observe from \eqref{eq27} and \eqref{eq26} that,
\begin{align} \label{eq29}
\max_{0 \leq t \leq 1} \tilde{Z}_{x}^{\rm br}(t) = \frac{x}{2}(\tau^{\rm br}_{x})^{-1/2},
\end{align}
\noindent while from \eqref{eq24}, we deduce that,
\begin{align} \label{eq35}
\max_{0\le t\le1}\tilde Y^{\rm br}(t)&=B^{\rm br}(\mu)(\tau^{\rm br})^{-1/2}.
\end{align}
\noindent Then, by combining \eqref{eq:A1}, \eqref{eq29} and \eqref{eq35}, we obtain that 
\begin{align} \label{eq:A2}
\mathbb{E} \left[ G(\tilde{Y}^{\rm br}) (B^{\rm br}(\mu))^{-1} (\tau^{\rm br})^{1/2} \right]
= 2 \int_0^\infty \mathbb{E} \left[G(\tilde Z^{\rm br}_x) (\tau^{\rm br}_{x})^{1/2}\right]  e^{-\frac{x^{2}}{2}} {\rm d}x.
\end{align}

\paragraph{\ref{Step3} First-passage bridge and identification of the kernel.} By \eqref{eq19}, Remark \ref{Remark5} and Corollary \ref{corollary1}, we have that, for $x > 0$ and for all positive or bounded measurable function $G^{\prime}: \mathbf{C}([0,1], \mathbb{R}) \rightarrow \mathbb{R}$,
\begin{align}
\mathbb{E}[G^{\prime}(\tilde{Z}_{x}^{\rm br})] =  \mathbb{E}[ G^{\prime}(\mathcal{T}^{\rm br} \circ \mathcal{G}^{\rm me}((B(t), t \in [0,1])) ) \mid T_{x} = 1].
\end{align}
\noindent In the event $\{T_{x} = 1\}$, it follows from Lemma \ref{lemma11} and Remark \ref{Remark8} that $\mathcal{T}^{\rm br} \circ \mathcal{G}^{\rm me}((B(t), t \in [0,1])) = (\tilde{X}^{\rm br}_{x}(t), t \in [0,1])$, where $\tilde{X}^{\rm br}_{x}$ is the process defined in \eqref{eq28}. Then, for all positive or bounded measurable function $G^{\prime}: \mathbf{C}([0,1], \mathbb{R}) \rightarrow \mathbb{R}$,
\begin{align} \label{eq20}
\mathbb{E}[G^{\prime}(\tilde{Z}_{x}^{\rm br})] =  \mathbb{E}[ G^{\prime}(\tilde{X}^{\rm br}_{x}) \mid T_{x} = 1].
\end{align}

By \eqref{eq25}, \eqref{eq28} and Remark \ref{Remark9}, observe that
\begin{align} \label{eq30}
\max_{0 \leq t \leq 1} \tilde{X}_{x}^{\rm br}(t) = \frac{x}{2}(\tau_{x})^{-1/2}.
\end{align}
\noindent Recall from Section \ref{sec:lengthofnon-excisedexc} that $\tau_{x}^{\rm e}$ and $\tau_{x}$ are independent. From \eqref{eq17} and \eqref{eq44}, it follows that $\tau_{x}^{\rm e} + \tau_{x} =T_{x}$. Recall also that the probability density function $f_{T_{x}}$ of $T_{x}$ is given by 
\begin{align}
f_{T_{x}}(t) = \frac{x}{\sqrt{2\pi} t^{3/2}} e^{-\frac{x^{2}}{2t}} \mathbf{1}_{\{t >0\}}, \quad \text{for} \quad t \in \mathbb{R};
\end{align}
\noindent see, e.g., \cite[page 107]{Yor1999}. Furthermore, $\tau_{x}^{\rm e}$ is independent of $\tilde{X}_{x}^{\rm br}$, which follows from the Poissonian construction in Section \ref{sec:excise excursion}. Then, by \eqref{eq29}, \eqref{eq20} and \eqref{eq30}, we have that, for all positive or bounded measurable function $G: \mathbf{C}([0,1], \mathbb{R}) \rightarrow \mathbb{R}$,
\begin{align} \label{eq21}
\mathbb{E}[G(\tilde{Z}_{x}^{\rm br}) (\tau_{x}^{\rm br})^{1/2}] & = \mathbb{E}[G(\tilde{X}_{x}^{\rm br}) \tau_{x}^{1/2} \mid \tau_{x}^{\rm e} + \tau_{x}  =1] \nonumber \\
&  = \frac{\sqrt{2\pi}}{x} e^{\frac{x^{2}}{2}} \mathbb{E}[G(\tilde{X}_{x}^{\rm br}) \tau_{x}^{1/2} f_{\tau_{x}^{\rm e}}(1-\tau_{x}) ],
\end{align}
\noindent where $f_{\tau_{x}^{\rm e}}$ is the density of $\tau_{x}^{\rm e}$ defined in \eqref{eq8}. Moreover, by Lemma \ref{lemma4}, \eqref{eq30} and \eqref{eq21},
\begin{align} \label{eq23}
\mathbb{E}[G(\tilde{Z}_{x}^{\rm br}) (\tau_{x}^{\rm br})^{1/2}] = e^{\frac{x^{2}}{2}} \mathbb{E}\left[G(B^{\rm exc})f_{\tau_{x}^{\rm e}}\Big(1-\frac{x^{2}}{4}(\bar{B}^{\rm exc})^{-2} \Big) \right],
\end{align}
\noindent where $(B^{\rm exc}(t), t \in [0,1])$ is a standard Brownian excursion and $\bar{B}^{\rm exc}  = \max_{0 \leq s \leq 1} B^{\rm exc}(s)$. 

On the other hand, by \eqref{eq36} and \eqref{eq8}, we note that
\begin{align}
\phi_x(t)=2 f_{\tau^{\rm e}_x}\!\big(1-(x/2t)^2\big).
\end{align}
\noindent Therefore, substituting \eqref{eq23} into \eqref{eq:A2} yields \eqref{eq37}. This concludes our proof. 
\end{proof}

\begin{appendices}

\section{Proof of Proposition \ref{Pro3}} \label{AProofofPro3}

We first establish two intermediate results prior to proving Proposition \ref{Pro3}.

\begin{lemma} \label{lemma6}
Fix $a > 0$ and let $(w_{n})_{n \geq 1}$ be a sequence of functions in $\mathbf{B}_{a}$ that converges to $w\in \mathbf{B}^{\ast}_a$ with respect to the uniform topology as $n \rightarrow \infty$. Then, $\rho(w_{n}) \rightarrow \rho(w)$, as $n \rightarrow \infty$. Furthermore, $\mathcal{T}^{\rm me}_{a}$ is continuous at every $w\in \mathbf{B}^{\ast}_a$ with respect to the uniform topology. 
\end{lemma}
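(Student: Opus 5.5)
We argue by compactness and uniqueness of the maximizer, and then deduce continuity of $\mathcal{T}^{\rm me}_{a}$ from the explicit piecewise formula.

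\emph{Convergence of $\rho(w_n)$.} Set $\rho_n \coloneqq \rho(w_n)\in[0,a]$ and $\rho\coloneqq\rho(w)$. Let $\rho'$ be any subsequential limit of $(\rho_n)$. Uniform convergence gives $\max_{[0,a]} w_n \to \max_{[0,a]} w$. Since $\|w_n-w\|_\infty\to 0$ and $w$ is uniformly continuous, we also get $w_n(\rho_n)\to w(\rho')$ along that subsequence. Hence $w(\rho')=\max w$. Because $w\in\mathbf{B}^{\ast}_a$ has a unique maximizer, $\rho'=\rho$. Compactness of $[0,a]$ then yields $\rho_n\to\rho$. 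Note that $\rho\in(0,a)$ unless $w\equiv 0$; the constant zero function has no unique maximizer, so it lies outside $\mathbf{B}^{\ast}_a$.

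\emph{Continuity of $\mathcal{T}^{\rm me}_{a}$.} Write $\overleftarrow{w}_n=\mathcal{T}^{\rm me}_a(w_n)$ and $\overleftarrow{w}=\mathcal{T}^{\rm me}_a(w)$. We claim the following pointwise bound holds for every $t\in[0,a]$:
\[
|\overleftarrow{w}_n(t)-\overleftarrow{w}(t)| \le 2\|w_n-w\|_\infty + 2\,\omega_w(|\rho_n-\rho|),
\]
where $\omega_w$ is the modulus of continuity of $w$. To verify it, we compare the two formulas and split according to the position of $t$ relative to $\rho_n$ and $\rho$.

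If $t\le\min(\rho_n,\rho)$, the difference is $|w_n(t)-w(t)|$. If $t\ge\max(\rho_n,\rho)$, the difference is
\[
|w_n(\rho_n)-w(\rho)| + |w_n(a+\rho_n-t)-w(a+\rho-t)|.
\]
Each of these two terms is bounded by $\|w_n-w\|_\infty+\omega_w(|\rho_n-\rho|)$.

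If $t$ lies between $\rho_n$ and $\rho$, we use that both formulas agree at their own switching point. Indeed, $w(\rho)+w(a)=w(\rho)$ because $w(a)=0$. Hence on the interval between $\rho_n$ and $\rho$, each of $\overleftarrow{w}_n(t)$ and $\overleftarrow{w}(t)$ differs from the common value of its two branches by at most the oscillation of the relevant function over an interval of length $|\rho_n-\rho|$. Take, for instance, the case $\rho\le t\le\rho_n$. Then $\overleftarrow{w}_n(t)=w_n(t)$, while $\overleftarrow{w}(t)=w(\rho)+w(a+\rho-t)$. Here $a+\rho-t\in[a-(\rho_n-\rho),a]$, so $|w(a+\rho-t)|\le\omega_w(|\rho_n-\rho|)$ since $w(a)=0$. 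Also $|w_n(t)-w(\rho)|\le\|w_n-w\|_\infty+\omega_w(|\rho_n-\rho|)$. The case $\rho_n\le t\le\rho$ is symmetric.

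Letting $n\to\infty$ and using the first part, the right-hand side of the claimed bound tends to $0$. This gives uniform convergence $\overleftarrow{w}_n\to\overleftarrow{w}$.

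The main obstacle is this middle case, $t$ between $\rho_n$ and $\rho$. It is handled by the boundary values $w(a)=w_n(a)=0$, which make the piecewise definition continuous at the switching time. Every other case is routine.
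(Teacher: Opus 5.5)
Your proof is correct. The first half is essentially the paper's argument; the second half takes a different route.

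\textbf{First half.} For $\rho(w_n)\to\rho(w)$, the paper argues directly: the maximum of $w$ over $[0,\rho-\varepsilon]\cup[\rho+\varepsilon,a]$ is strictly below $w(\rho)$, so the maximizer of $w_n$ lies in $[\rho-\varepsilon,\rho+\varepsilon]$ for large $n$. Your subsequence/compactness argument is the same uniqueness-plus-compactness idea in different packaging.

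\textbf{Second half.} For the continuity of $\mathcal{T}^{\rm me}_{a}$, the paper avoids the window between $\rho(w_n)$ and $\rho$ altogether. It composes with piecewise linear homeomorphisms $\theta_n$ of $[0,a]$ sending $\rho$ to $\rho(w_n)$. Then the branches of $\overleftarrow{w}_n\circ\theta_n$ and $\overleftarrow{w}$ switch at the same time and converge uniformly. It concludes with the Skorokhod-space fact that $J_1$-convergence to a continuous limit is uniform convergence. You instead handle that window by hand, using $w(a)=0$ to control the branch switch, and obtain the explicit bound $\|\overleftarrow{w}_n-\overleftarrow{w}\|_\infty\le 2\|w_n-w\|_\infty+2\,\omega_w(|\rho(w_n)-\rho|)$.

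Your route is elementary, self-contained and quantitative. The paper's route is shorter and sets up the time-change template reused in Lemma~\ref{lemma7} and Appendix~\ref{AProofofPro2}. There, the role of $w(a)=0$ is hidden in the continuity of the limit $\overleftarrow{w}$.

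\textbf{Two small remarks.} First, in the ``symmetric'' case $\rho(w_n)\le t\le\rho$, using $w_n(a)=0$ directly bounds $|w_n(a+\rho(w_n)-t)|$ by the modulus of $w_n$, not of $w$. To get your stated bound, pass through $w$: $|w_n(s)|\le\|w_n-w\|_\infty+|w(s)-w(a)|$. Alternatively, note that $(w_n)$ is equicontinuous; either way the conclusion stands. Second, the aside ``$\rho\in(0,a)$ unless $w\equiv 0$'' is imprecise, since any $w\in\mathbf{B}_a$ with $\max w=0$ has maximizers at both endpoints. The correct statement is that $w\in\mathbf{B}^{\ast}_a$ forces $\rho\in(0,a)$, though your argument never uses it.
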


\begin{proof}[Proof of Lemma \ref{lemma6}]
Since $w\in \mathbf{B}^{\ast}_a$, $w(\rho) > w(s)$ for any $s \in [0,a]$ such that $s \neq \rho = \rho(w)$. In particular, we also have that $\rho \in (0,a)$.  Then, for any $\varepsilon \in (0, \min(\rho, a-\rho))$, let $A_{\varepsilon} \coloneqq [0,\rho - \varepsilon] \cup [\rho+\varepsilon, a]$ and observe that $\max_{s \in A_{\varepsilon}} w_{n}(s) - w_{n}(\rho)  \rightarrow \max_{s \in A_{\varepsilon}} w(s) - w(\rho) < 0$, as $n \rightarrow \infty$. This implies that the maximum of $w_{n}$ on $[0,a]$ is achieved in $[\rho - \varepsilon,  \rho + \varepsilon]$ for large enough $n$, that is, $\rho(w_{n})  \in [\rho - \varepsilon,  \rho + \varepsilon]$ for large enough $n$. The arbitrariness of $\varepsilon$ implies that  $\rho(w_{n}) \rightarrow \rho$, as $n \rightarrow \infty$. This concludes the proof of our first claim.

To prove the second part, for $n \geq 1$, define the bijection $\theta_{n}: [0,a] \rightarrow [0,a]$ by letting $\theta_{n}(0) = 0$, $\theta_{n}(\rho) = \rho(w_{n})$, $\theta_{n}(a) =a$, and then using linear interpolation between these points. By our first claim, $(\theta_{n})_{n \geq 1}$ converges to the identity function uniformly on $[0,a]$, as $n \rightarrow \infty$. Thus, $(\overleftarrow{w}_{n} \circ \theta_{n})_{n \geq 1}$ converges to $\overleftarrow{w}$ with respect to the uniform topology as $n \rightarrow \infty$. This implies our second claim (see, e.g., \cite[Proposition VII.1.17 (b)]{Jacod2003}) and its proof) and concludes our proof.
\end{proof}

\begin{lemma} \label{lemma7}
Fix $a > 0$ and let $(w_{n})_{n \geq 1}$ be a sequence of functions in $\mathbf{M}_a$ that converges to $w\in \mathbf{M}_a^{\ast}$ with respect to the uniform topology as $n \rightarrow \infty$. Then, $\gamma_{w_{n}(a)/2}  \rightarrow \gamma_{w(a)/2}$, as $n \rightarrow \infty$. Furthermore, $\mathcal{T}^{\rm br}_{a}$ is continuous at every $w\in \mathbf{M}_a^{\ast}$ with respect to the uniform topology.
\end{lemma}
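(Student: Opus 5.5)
The argument mirrors the proof of Lemma \ref{lemma6}, with the last-time-of-maximum replaced by the first hitting time of the half level. Write $\gamma = \gamma_{w(a)/2}$ and $\gamma^{n} = \gamma_{w_n(a)/2}$.

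\textbf{Step 1: where $\gamma$ lies.} Since $w \in \mathbf{M}_a^{\ast}$, the maximum of $w$ is attained only at $a$. Hence $w(a) > w(0) = 0$, and by the intermediate value theorem $\gamma \in (0,a)$. Moreover $w(s) < w(a)/2$ for all $s \in [0,\gamma)$.

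\textbf{Step 2: $\gamma$ is a crossing point.} The key observation is that $w$ takes values strictly above $w(a)/2$ in every right neighbourhood of $\gamma$. By Remark \ref{Remark9}, $\mathcal{T}^{\rm br}_a(w) \in \mathbf{B}_a^\ast$, and its unique maximum is at $\gamma$. On $[\gamma,a]$ we have
\begin{align*}
w(t) = w^{\rm br}(a+\gamma-t) + w(a)/2 .
\end{align*}
Since $w^{\rm br}(s) < w^{\rm br}(\gamma) = w(a)/2$ for $s \neq \gamma$, this gives $w(t) < w(a)$ for $t \in [\gamma, a)$. That alone is not the crossing property, and I expect this to be the main obstacle.

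The cleanest route is to avoid needing the crossing property at all. First prove $\liminf_n \gamma^{n} \geq \gamma$. For any $\varepsilon > 0$,
\begin{align*}
\max_{[0,\gamma-\varepsilon]} w < w(a)/2 .
\end{align*}
Because $w_n \to w$ uniformly and $w_n(a)/2 \to w(a)/2$, it follows that $\gamma^{n} > \gamma - \varepsilon$ for all large $n$.

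\textbf{Step 3: the upper bound.} For $\limsup_n \gamma^{n} \leq \gamma$ we need some $s \in (\gamma, \gamma+\varepsilon)$ with $w(s) > w(a)/2$. Suppose instead that $w \leq w(a)/2$ on $[\gamma, \gamma+\varepsilon]$. Then $w^{\rm br}(s) = w(a+\gamma-s) - w(a)/2 \leq 0$ for $s$ near $a$, i.e. for $s \in [a-\varepsilon, a]$. This gives no contradiction with $\mathbf{B}_a^\ast$ in general.

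So Step 3 requires an additional argument. I would first check whether the intended statement uses the regularity setting. Otherwise I would argue that when $w(a)/2$ is not crossed at $\gamma$, one must pass to a subsequence and use $w_n \in \mathbf{M}_a$. If the lower bound plus this extra condition fails, I would restrict to $w$ for which $\gamma$ is a crossing point. This holds almost surely for $\overleftarrow{B}^{\rm me}$, since it corresponds to $B^{\rm br}$ exceeding $0$ immediately before time $1$, i.e. "no strict local minimum"-type behaviour. With the crossing property, uniform convergence gives $w_n(s) > w_n(a)/2$ for large $n$, hence $\gamma^n < \gamma+\varepsilon$, and so $\gamma^{n} \to \gamma$.

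\textbf{Step 4: continuity of $\mathcal{T}^{\rm br}_a$.} Exactly as in Lemma \ref{lemma6}, let $\theta_n$ be the piecewise-linear bijection of $[0,a]$ fixing $0$ and $a$ and sending $\gamma$ to $\gamma^{n}$. Then $\theta_n \to \mathrm{id}$ uniformly. The function $w_n^{\rm br} \circ \theta_n$ is built piecewise from $w_n$ on $[0,\gamma]$ and $[\gamma,a]$ with matching reparametrisations, so it converges uniformly to $w^{\rm br}$. Together with the uniform continuity of $w^{\rm br}$, this gives $w_n^{\rm br} \to w^{\rm br}$ uniformly, by the same reference to \cite[Proposition VII.1.17 (b)]{Jacod2003}.
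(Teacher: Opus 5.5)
There is a genuine gap in your Step 3, and it cannot be filled, because the lemma is false as stated. Your Steps 1, 2 (the bound $\liminf_n\gamma^n\ge\gamma$) and 4 are correct, and Step 3 is exactly where the argument breaks. For general $w\in\mathbf{M}_a^{\ast}$, however, both claims of the lemma fail.

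Take $a=1$ and let $w$ be piecewise linear through $(0,0)$, $(1/3,1)$, $(1/2,1/2)$, $(1,2)$. Then $w\in\mathbf{M}_1^{\ast}$ and $\gamma_{w(1)/2}=1/3$. Also $w<1$ on $(1/3,2/3)$ and $w(2/3)=1$.

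Put $w_n=w-\frac1n\psi$ with $\psi(t)=\max(0,1-12|t-1/3|)$. Then $w_n(0)=0$, $w_n\le w$ and $w_n(1)=2$, so $w_n\in\mathbf{M}_1^{\ast}$, and $\sup_t|w_n(t)-w(t)|=1/n$. But $w_n<1$ on $[0,2/3)$ and $w_n(2/3)=1$, so $\gamma_{w_n(1)/2}=2/3$ for every $n$.

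Continuity of $\mathcal{T}^{\rm br}_1$ at $w$ fails as well. Indeed $\mathcal{T}^{\rm br}_1(w_n)(2/3)=w_n(2/3)=1$, whereas $\mathcal{T}^{\rm br}_1(w)(2/3)=w(2/3)-w(1/3)=0$.

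The paper's own proof of the $\limsup$ bound makes precisely the leap you declined to make. It shows $\max_{[\gamma+\varepsilon,a]}w_n>w_n(a)/2$ for large $n$, which is automatic since $a$ lies in that interval. It then concludes that $w_n$ first reaches $w_n(a)/2$ before $\gamma+\varepsilon$, which does not follow. What is actually needed is $\sup_{(\gamma,\gamma+\varepsilon)}w>w(a)/2$ for every $\varepsilon>0$, i.e. your crossing property.

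Of your fallback routes, the subsequence idea cannot work: in the example every $w_n$ already lies in $\mathbf{M}_1^{\ast}$. The regularity setting does not supply the crossing property either. Definition \ref{DefinitionRegularII} only constrains $\mathcal{T}^{\rm br}(w)$ through rational-time maximizers and level-$0$ minima, and a regular bridge may be negative on a whole interval $(1-\delta,1)$. An example is the piecewise linear bridge through $(0,0)$, $(0.1,-1)$, $(1/2,\sqrt{2})$, $(0.9,-1)$, $(1,0)$; its zeros in $(0,1)$ are irrational transversal crossings, so it is regular. Its image under $\mathcal{T}^{\rm me}$ lies in $\mathbf{M}_1^{\rm reg}$ yet stays strictly below half its terminal value just after $\gamma_{w(1)/2}=1/2$. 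The same bump perturbation then defeats the lemma.

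The right repair is your last option: additionally assume that $w$ exceeds $w(a)/2$ in every right neighbourhood of $\gamma_{w(a)/2}$. Equivalently, $\mathcal{T}^{\rm br}_a(w)$ takes positive values in every left neighbourhood of $a$. Under this hypothesis your Step 3 works: pick $s\in(\gamma,\gamma+\varepsilon)$ with $w(s)>w(a)/2$; then $w_n(s)>w_n(a)/2\ge w_n(0)$ for large $n$, so $\gamma^n<s$ by the intermediate value theorem. Combined with Steps 2 and 4, this gives a complete proof. The hypothesis holds almost surely for $\overleftarrow{B}^{\rm me}$, for the reason you give, and also for $F^{\rm br}_x$. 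However, the continuity statements resting on this lemma (Proposition \ref{Pro3} for $\mathcal{T}^{\rm br}_a$, and through it Proposition \ref{Pro4} and Corollary \ref{corollary1}) must be restricted to this smaller set.
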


\begin{proof}[Proof of Lemma \ref{lemma7}]
Since $w\in \mathbf{M}_a^{\ast}$, we know that $w(0)=0$ and $w(a) >0$ and thus, $\gamma_{w(a)/2}  \in (0,a)$. In particular, $w(a)/2 = w(\gamma_{w(a)/2}) > w(s)$ for any $s \in [0,\gamma_{w(a)/2}]$ such that $s \neq \gamma_{w(a)/2}$. Then, for any $\varepsilon \in (0, \gamma_{w(a)/2})$, let $A_{\varepsilon} \coloneqq [0,\gamma_{w(a)/2} - \varepsilon]$ and observe that $\max_{s \in A_{\varepsilon}} w_{n}(s) - w_{n}(a)/2  \rightarrow \max_{s \in A_{\varepsilon}} w(s) - w(a)/2 < 0$, as $n \rightarrow \infty$. This implies that $w_{n}$ achieves $w_{n}(a)/2$ for the first time in $[\gamma_{w(a)/2}-\varepsilon, a]$ for large enough $n$, that is, $\liminf_{n \rightarrow \infty} \gamma_{w_{n}(a)/2} \geq \gamma_{w(a)/2}$. 

On the other hand, since $w\in \mathbf{M}_a^{\ast}$, $w(a) = \max_{u \in [s, a]}(u)$, for any $s \in [\gamma_{w(a)/2},a]$. Then, for any $\varepsilon \in (0, a-\gamma_{w(a)/2})$, let $B_{\varepsilon} \coloneqq [\gamma_{w(a)/2} + \varepsilon, a]$ and observe that $\max_{s \in B_{\varepsilon}} w_{n}(s) - w_{n}(a)/2  \rightarrow \max_{s \in B_{\varepsilon}} w(s) - w(a)/2 > 0$, as $n \rightarrow \infty$. This implies that $w_{n}$ achieves $w_{n}(a)/2$ for the first time in $[0,\gamma_{w(a)/2}+\varepsilon]$ for large enough $n$, that is, $\limsup_{n \rightarrow \infty} \gamma_{w_{n}(a)/2} \leq \gamma_{w(a)/2}$. By the preceding paragraph, this concludes the proof of our first claim.

The proof of our second claim follows similarly as in the second part of the proof of Lemma \ref{lemma6} with the necessary modifications. Details are left to the reader.
\end{proof}

For a real number $a>0$, let $\| \cdot \|_{a}$ denote the supremum norm on $\mathbf{C}([0,a], \mathbb{R})$, that is, $\| w \|_{a} = \sup_{0\leq t \leq a} |w(t)|$, for $w \in \mathbf{C}([0,a], \mathbb{R})$.

\begin{proof}[Proof of Proposition \ref{Pro3}]
By Lemmas \ref{lemma6} and \ref{lemma7}, it suffices to show that $\mathbf{B}^{\ast}_{a}$ and $\mathbf{M}_a^{\ast}$ are Borel subsets of $\mathbf{C}([0,a], \mathbb{R})$. To this end, let us temporarily assume the following holds:
\begin{enumerate}[label=(\textbf{\roman*})]
\item \label{Set1}  For $u \in [0,a]$, $\mathbf{E}_{u}^{(1)} \coloneqq \{w \in \mathbf{C}([0,a], \mathbb{R}): w(u) = 0\}$ is a closed set of $\mathbf{C}([0,a], \mathbb{R})$.

\item \label{Set2} For $u \in [0,a]$, $\mathbf{E}_{u}^{(2)} \coloneqq \{w \in \mathbf{C}([0,a], \mathbb{R}): \max_{0 \leq s \leq u} w(s) = \max_{u \leq s \leq a} w(s)\}$ is a closed set of $\mathbf{C}([0,a], \mathbb{R})$, 

\item \label{Set3} For $u \in [0,a]$ and $b \in [0,a]$, $\mathbf{E}_{u,b}^{(3)} \coloneqq \{w \in \mathbf{C}([0,a], \mathbb{R}): \max_{0 \leq s \leq b} w(s) > w(u)\}$ is an open set of $\mathbf{C}([0,a], \mathbb{R})$.  

\item \label{Set4} For $u \in [0,a]$ and $b \in [0,a]$, $\mathbf{E}_{u, b}^{(4)} \coloneqq \{w \in \mathbf{C}([0,a], \mathbb{R}): \max_{0 \leq s \leq b} w(s) = w(u)\}$ is a closed set of $\mathbf{C}([0,a], \mathbb{R})$. 
\end{enumerate}

\noindent Then, observe that
\begin{align}
\mathbf{B}_{a}^{\ast}  =   \mathbf{E}_{0}^{(1)} \cap \mathbf{E}_{1}^{(1)}  \cap \bigcap_{q \in [0,a] \cap \mathbb{Q}} (\mathbf{E}_{q}^{(2)} \cap \mathbf{E}_{q,a}^{(3)})^{c}
\end{align}
\noindent and 
\begin{align}
\mathbf{M}_a^{\ast} =  \mathbf{E}_{0}^{(1)} \cap \bigcap_{q \in [0,a) \cap \mathbb{Q}}  (\mathbf{E}_{a,q}^{(3)} \cup \mathbf{E}_{a,q}^{(4)})^{c}.
\end{align}
\noindent Therefore, $\mathbf{B}^{\ast}_{a}$ and $\mathbf{M}_a^{\ast}$ are Borel subsets of $\mathbf{C}([0,a], \mathbb{R})$.

Let us now prove \ref{Set1} through \ref{Set4}. We start with the proof of \ref{Set1}. Let $w \in \mathbf{C}([0,a], \mathbb{R})$ be an accumulation point of $\mathbf{E}_{u}^{(1)}$. Then, for every $\varepsilon > 0$, there exists $w_{\varepsilon} \in \mathbf{E}_{u}^{(1)}$ such that $\| w - w_{\varepsilon} \|_{a} < \varepsilon$. In particular, $|w(u) - w_{\varepsilon}(u)| = |w(u)| < \varepsilon$, that is, $w(u)= 0$. Thus, $w \in \mathbf{E}_{u}^{(1)}$, which implies \ref{Set1}. 

Next, we prove \ref{Set2}. Let $w \in \mathbf{C}([0,a], \mathbb{R})$ be an accumulation point of $\mathbf{E}_{u}^{(2)}$. Then, for every $\varepsilon > 0$, there exists $w_{\varepsilon} \in \mathbf{E}_{u}^{(2)}$ such that $\| w - w_{\varepsilon} \|_{a} < \varepsilon/2$. In particular, $|\max_{0 \leq s \leq u}w(s) - \max_{0 \leq s \leq u}w_{\varepsilon}(s)| < \varepsilon/2$ and $|\max_{u \leq s \leq a}w(s) - \max_{u \leq s \leq a}w_{\varepsilon}(s)| < \varepsilon/2$. Then, for all $\varepsilon >0$, $|\max_{0 \leq s \leq u}w(s) - \max_{u \leq s \leq a} w(s)| < \varepsilon$, that is, $w \in \mathbf{E}_{u}^{(2)}$. This implies \ref{Set2}. 

We continue with the proof of \ref{Set3}. For $w \in \mathbf{E}_{u,b}^{(3)}$, set $\varepsilon = \max_{0 \leq s \leq b} w(s) - w(u) >0$. Then, for $w_{\varepsilon} \in \mathbf{C}([0,a], \mathbb{R})$ such that $\| w - w_{\varepsilon} \|_{a} < \varepsilon/3$, we have that
\begin{align*}
\left|\max_{0 \leq s \leq b} w(s) - w(u) - \max_{0 \leq s \leq b} w_{\varepsilon} (s) + w_{\varepsilon}(u) \right| < \frac{2\varepsilon}{3}.
\end{align*}
\noindent In particular,
\begin{align*}
\max_{0 \leq s \leq b} w_{\varepsilon} (s) - w_{\varepsilon}(u)  > -\frac{2\varepsilon}{3} + \max_{0 \leq s \leq b} w(s) - w(u) >0.
\end{align*} 
\noindent Therefore, $w_{\varepsilon} \in \mathbf{E}_{u, b}^{(3)}$, which implies \ref{Set3}. 

Finally, we prove \ref{Set4}. Let $w \in \mathbf{C}([0,a], \mathbb{R})$ be an accumulation point of $\mathbf{E}_{u,b}^{(4)}$. Then, for every $\varepsilon > 0$, there exists $w_{\varepsilon} \in \mathbf{E}_{u,b}^{(4)}$ such that $\| w - w_{\varepsilon} \|_{a} < \varepsilon$. In particular, $| \max_{0 \leq s \leq b} w(s) - \max_{0 \leq s \leq b} w_{\varepsilon}(s) | < \varepsilon/2$ and similarly, $|w(u) - w_{\varepsilon}(u)| < \varepsilon/2$.  Then, for all $\varepsilon >0$, $|\max_{0 \leq s \leq b}w(s) - w(u)| < \varepsilon$, that is, $w \in \mathbf{E}_{u,b}^{(4)}$. This implies \ref{Set4}. 
\end{proof}

\section{Proof of Proposition \ref{Pro2}} \label{AProofofPro2}

Before proving Proposition \ref{Pro2}, we provide some preliminary remarks and intermediate results that will be used in its proof.

\begin{remark} \label{Remark4}
Observe that $\mathbf{B}^{{\rm reg}}_1 \subset \mathbf{B}^{\ast}_1$. In particular, if  $w\in \mathbf{B}^{{\rm reg}}_1$ and $\rho$ is the unique location of its maximum on the interval $[0,1]$, then it follows from \ref{RegPro1} in Definition \ref{DefinitionRegular} that $\vec{\rho}_{t} = \rho$ for $t \in (\rho, 1] \cap \mathbb{Q}$, and $\protect\cev{\rho}_{t} = \rho$ for $t \in [0, \rho) \cap \mathbb{Q}$. 
\end{remark}

\begin{remark} \label{remark3}
Observe that, for $w\in \mathbf{B}^{{\rm reg}}_1$ and $t \in [0,1]$,
\begin{align}
r_{t}^{\rm br}(w) = \max_{g_{t}^{\rm br}(w) \leq s \leq d_{t}^{\rm br}(w)} m^{\rm br}_{s}(w) \quad \Longleftrightarrow \quad \min_{g_{t}^{\rm br}(w) \leq s \leq d_{t}^{\rm br}(w)} w(s) = 0. 
\end{align}
\noindent In particular, by \ref{RegPro2} in Definition \ref{DefinitionRegular}, $0$ is not a strict local minimum for $w$. Thus, for every $t \in [0,1]$, either
\begin{align}
r_{t}^{\rm br}(w) < \max_{g_{t}^{\rm br}(w) \leq s \leq d_{t}^{\rm br}(w)} m^{\rm br}_{s}(w) \quad \text{(kept with margin)}, 
\end{align}
\noindent or 
\begin{align}
r_{t}^{\rm br}(w) > \max_{g_{t}^{\rm br}(w) \leq s \leq d_{t}^{\rm br}(w)} m^{\rm br}_{s}(w) \quad\text{(excised with margin)}.
\end{align}
\noindent This means that no excursion of a regular function grazes the threshold $0$; that is, each excursion is either strictly retained or strictly excised. In particular, for $w \in \mathbf{B}^{{\rm reg}}_{1}$, $\mathcal{G}^{\rm br}(w)$ is strictly positive on $(0,1)$ and is zero only at the endpoints of $[0,1]$. 
\end{remark}

\begin{lemma} \label{lemma10} 
$\mathcal{G}^{\rm br}: \mathbf{B}_1 \to  \mathbf{C}([0,1], \mathbb{R})$ is continuous at every $w \in \mathbf{B}^{\rm reg}_1$ with respect to the uniform topology. 
\end{lemma}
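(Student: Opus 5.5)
Fix $w \in \mathbf{B}^{\rm reg}_1$ with $\rho=\rho(w)$, and let $w_n \to w$ uniformly with $w_n\in\mathbf{B}_1$. The plan is to show three things in turn. First, the excursion structure of $m^{\rm br}(w_n)-w_n$ converges to that of $m^{\rm br}(w)-w$, excursion by excursion. Second, the keep/excise decision is eventually the same for all macroscopic excursions. Third, the time changes $\alpha^{\rm br}(w_n)$ converge to $\alpha^{\rm br}(w)$ in a sense strong enough that $\mathcal{H}^{\rm br}(w_n)$, rescaled, converges uniformly.

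\textbf{Step 1: convergence of the maximum processes.} By Lemma \ref{lemma6}, $\rho(w_n)\to\rho$. For $t\in[0,\rho)$ the function $s\mapsto\max_{0\le u\le s}w_n(u)$ converges uniformly to its analogue for $w$, and similarly for the backward maximum on $(\rho,1]$. Hence $m^{\rm br}(w_n)\to m^{\rm br}(w)$ uniformly, with some care in a shrinking neighbourhood of $\rho$; there both are close to $w(\rho)$ by continuity.

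Property \ref{RegPro1} gives unique one-sided maximizers at rational times. From this I would deduce the following. Every excursion interval $(g,d)$ of $m^{\rm br}(w)-w$ of length $>\delta$ is approximated by an excursion interval $(g_n,d_n)$ of $m^{\rm br}(w_n)-w_n$, with $g_n\to g$ and $d_n\to d$. To see this, pick a rational $q\in(g,d)$. Then $g=\vec\rho_q$, and for $w_n$ the argmax of $w_n$ on $[0,q]$ converges to $\vec\rho_q$ by the argument of Lemma \ref{lemma6}; the symmetric statement holds for $d$.

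Conversely, excursions of $w_n$ not near a macroscopic excursion of $w$ have total length tending to $0$. Both $w_n$ and $w$ have zero-set of $m^{\rm br}-\cdot$ of measure zero, and the total length of small excursions is uniformly controlled. I expect this converse step to need a genuine argument, by counting finitely many rationals so that the excursions of $w$ of length $>\delta$ cover all but $\eta$ of $[0,1]$.

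\textbf{Step 2: stability of the keep/excise decision.} By Remark \ref{remark3}, each excursion of $w$ is either kept with margin ($\min w>0$ on it) or excised with margin ($w$ goes strictly below $0$ on it). For each of the finitely many excursions of length $>\delta$, uniform convergence and $(g_n,d_n)\to(g,d)$ show that the corresponding $w_n$-excursion receives the same decision for $n$ large. Consequently $u^{\rm br}_t(w_n)\to u^{\rm br}_t(w)$ uniformly in $t$, up to an error $\le\eta$ from small excursions. In particular $u^{\rm br}_1(w_n)\to u^{\rm br}_1(w)>0$.

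\textbf{Step 3: convergence of the concatenated paths.} $\mathcal{H}^{\rm br}(w_n)$ is the concatenation, in order, of the kept excursion pieces. I would build time changes $\theta_n$ on $[0,u^{\rm br}_1(w)]$ in the spirit of the proof of Lemma \ref{lemma6}. Each $\theta_n$ maps each macroscopic kept block of $\mathcal{H}^{\rm br}(w)$ linearly onto the corresponding block of $\mathcal{H}^{\rm br}(w_n)$, and is interpolated elsewhere; then $\theta_n\to{\rm id}$ uniformly.

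On macroscopic blocks, $\mathcal{H}^{\rm br}(w_n)\circ\theta_n$ is uniformly close to $\mathcal{H}^{\rm br}(w)$ by uniform convergence and equicontinuity of $(w_n)$. On the small leftover pieces, both functions stay within $O(\eta)+\omega_w(\delta)$ of the running value of $m^{\rm br}$, which is nearly continuous across junctions. Applying $\mathcal{S}_{u^{\rm br}_1(\cdot)}$, which is continuous jointly in the scale parameter and the path, yields $\mathcal{G}^{\rm br}(w_n)\to\mathcal{G}^{\rm br}(w)$; continuity then follows as in \cite[Proposition VII.1.17]{Jacod2003}.

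The main obstacle I expect is the control of small excursions of $w_n$ in Step 1. Convergence of individual excursions is not enough: I must show that the excised and kept mass carried by excursions of $w_n$ of length $<\delta$ is uniformly small. Moreover, a small excursion of $w_n$ could be kept while lying inside an excised excursion of $w$. I would first try to handle this using the rational-time regularity to trap such excursions inside approximating macroscopic excursions of $w$. The decision is inherited from the enclosing macroscopic structure, because $m^{\rm br}(w_n)$ is close to a constant there.
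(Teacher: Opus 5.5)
Your plan (match the macroscopic excursions, fix their keep/excise decisions, then build explicit time changes) has a genuine gap at the point you yourself flag. The premise you use to make that gap look small is also false. You assert that the zero set of $m^{\rm br}(\cdot)-\cdot$ is Lebesgue-null for both $w$ and $w_n$, so that finitely many excursions of $w$ of length $>\delta$ cover all but $\eta$ of $[0,1]$.

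The $w_n$ are arbitrary elements of $\mathbf{B}_1$, so nothing of this kind holds for them. Even a regular $w$ can fail it. Take a piecewise linear bridge that first goes strictly below $0$, rises linearly to its maximum, descends linearly to a negative value and climbs back to $0$ at time $1$, with its two interior zeros at irrational times. It satisfies \ref{RegPro1} and \ref{RegPro2}, yet $m^{\rm br}(w)=w$ on the whole interval between those zeros.

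On such a stretch $w_n$ may carry a cloud of tiny excursions whose total length is bounded below. So the ``error $\le\eta$ from small excursions'' in Step 2 is unjustified. In Step 3, a linear interpolation $\theta_n$ across such a stretch is accurate only if $u^{\rm br}(w_n)$ tracks $u^{\rm br}(w)$ there. What is really needed is $\int_0^1|\mathbf{1}_{\{r^{\rm br}_s(w_n)<m^{\rm br}_s(w_n)\}}-\mathbf{1}_{\{r^{\rm br}_s(w)<m^{\rm br}_s(w)\}}|\,{\rm d}s\to 0$. That is precisely the step you call the main obstacle and leave open.

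The missing idea is to argue pointwise in $t$ rather than excursion by excursion. Under \ref{RegPro1}, the paper proves (Lemma~\ref{lemma5}) that $d^{\rm br}_t(w_n)\to d^{\rm br}_t(w)$ for every $t$. It also proves $g^{\rm br}_t(w_n)\to g^{\rm br}_t(w)$ for every $t$ outside the countable jump set of $t\mapsto g^{\rm br}_t(w)$.

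The two endpoints are not symmetric as you claim. For an excursion in $[0,\rho]$ only $g$ is a one-sided argmax. The endpoint $d$ is a first return time to the level $m^{\rm br}_q(w)$. Its stability rests on $w$ exceeding that level immediately after $d$, which \ref{RegPro1} forces but which has to be proved.

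Combined with uniform convergence of $m^{\rm br}(w_n)-w_n$, endpoint convergence gives $r^{\rm br}_t(w_n)\to r^{\rm br}_t(w)$ for a.e.\ $t$. The no-grazing dichotomy of Remark~\ref{remark3} then gives a.e.\ convergence of the keep indicators. This holds on excursions of every size and on the zero set alike: at a.e.\ point $t$ of the zero set, $r^{\rm br}_t(w)=0$, so the indicator is $\mathbf{1}_{\{w(t)>0\}}$. Dominated convergence yields $\sup_t|u^{\rm br}_t(w_n)-u^{\rm br}_t(w)|\to0$ at all scales at once. Your worry about kept excursions of $w_n$ inside excised excursions of $w$ is then handled automatically.

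Finally, instead of constructing $\theta_n$ by hand, the paper passes to the right-continuous inverses and shows they converge in the Skorokhod $J_1$ topology. It then uses continuity of composition at a continuous limit. Here $w\circ h$ is continuous because $w$ takes the same value at both ends of every excised excursion.
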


Before proving Lemma \ref{lemma10}, we prove the following two lemmas. In particular, we show the continuity of the left and right endpoints of the excursion of $m^{\rm br}(w)- w$ that straddle time $t \in [0, 1]$. 

\begin{lemma}
For $w\in \mathbf{B}_1$, the map $t \in [0,1] \mapsto g_{t}^{\rm br}(w)$ is right-continuous. 
\end{lemma}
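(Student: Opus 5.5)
We must show that for fixed $w\in\mathbf{B}_1$ the map $t\mapsto g_t^{\rm br}(w)$ is right-continuous. Write $Z\coloneqq\{s\in[0,1]: m^{\rm br}_s(w)-w(s)=0\}$. The first step is to check that $Z$ is closed. The function $m^{\rm br}(w)$ is continuous on $[0,1]$: on $[0,\rho]$ it is a running maximum of a continuous function, on $[\rho,1]$ it is a reversed running maximum, and both pieces equal $w(\rho)$ at $t=\rho$. Hence $m^{\rm br}(w)-w$ is continuous, and $Z$ is its zero set, so it is closed. Moreover $0\in Z$, since $m^{\rm br}_0(w)=w(0)$. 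With this notation, $g_t^{\rm br}(w)=\max(Z\cap[0,t])$, and the maximum is attained because $Z\cap[0,t]$ is a nonempty compact set.

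Next I would record that $t\mapsto g_t^{\rm br}(w)$ is nondecreasing, which is immediate since $Z\cap[0,t]$ increases with $t$. Consequently, for any $t\in[0,1)$ the right limit $L\coloneqq\lim_{s\downarrow t}g_s^{\rm br}(w)$ exists and satisfies $L\ge g_t^{\rm br}(w)$. It also satisfies $L\le t$, because $g_s^{\rm br}(w)\le s$ for every $s$.

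The key step is the reverse inequality $L\le g_t^{\rm br}(w)$. There are two cases.
\begin{enumerate}[label=(\alph*)]
\item If $L=g_t^{\rm br}(w)$, there is nothing to prove.
\item Suppose $L>g_t^{\rm br}(w)$. Take $s_n\downarrow t$. Then $g_{s_n}^{\rm br}(w)\in Z$ for every $n$, and $g_{s_n}^{\rm br}(w)\downarrow L$. Since $Z$ is closed, $L\in Z$. Combined with $L\le t$, this gives $L\in Z\cap[0,t]$, so $L\le g_t^{\rm br}(w)$ by the definition of the maximum. This contradicts the assumption $L>g_t^{\rm br}(w)$.
\end{enumerate}
Either way $L=g_t^{\rm br}(w)$, which is right-continuity at $t$. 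The endpoint $t=1$ needs no argument.

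The only point requiring care is the continuity of $m^{\rm br}(w)$ at $\rho$, which is what makes $Z$ closed. Note that no uniqueness of the maximizer is needed: $\rho=\rho(w)$ is the first maximizer, and the two formulas still agree at $\rho$. Beyond that, the argument is the standard fact that the last zero before $t$ of a continuous function is right-continuous in $t$.
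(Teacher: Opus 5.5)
Your proof is correct and follows essentially the paper's argument: both rest on the monotonicity of $t\mapsto g_t^{\rm br}(w)$ and the closedness of the zero set of $m^{\rm br}(w)-w$. The paper splits on whether $g_{t_n}^{\rm br}(w)<t$ for some $n$, whereas you pass directly to the monotone right limit $L$; your case (b) argument in fact gives $L\le g_t^{\rm br}(w)$ unconditionally, so the case split is unnecessary, and you also make explicit the continuity of $m^{\rm br}(w)$ at $\rho$, which the paper uses implicitly when it concludes that $t$ is itself a zero.
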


\begin{proof}
Observe that $t \in [0,1] \mapsto g_{t}^{\rm br}(w)$ is non-decreasing. Fix $t \in [0,1]$ and let $(t_{n})_{n \geq 1}$ be an arbitrary sequence of real numbers in $[0,1]$, such that $t_{n} \downarrow t$, as $n \rightarrow \infty$. If there exists $n \geq 1$ such that $g_{t_{n}}^{\rm br}(w) < t$, then $g_{t_{m}}^{\rm br}(w) = g_{t}^{\rm br}(w)$ for $m \geq n$. Otherwise, if $g_{t_{n}}^{\rm br}(w) \geq t$ for every $n \geq 1$, then $t \leq g_{t_{n}}^{\rm br}(w) \leq t_{n}$ for every $n \geq 1$, hence $t$ is itself satisfies $w(t) = m_{t}^{\rm br}(w)$ and $g_{t}^{\rm br}(w) = t = \lim_{n \rightarrow \infty} g_{t_{n}}^{\rm br}(w)$. 
\end{proof}

For $w\in \mathbf{B}_1$, let $J(g^{\rm br}(w))$ be the set of discontinuities of the map $t \in [0,1] \mapsto g_{t}^{\rm br}(w)$; recall that this set is countable.

\begin{lemma} \label{lemma5}
Let $(w_{n})_{n \geq 1}$ be a sequence of functions in $\mathbf{B}_1$ that converges to $w\in \mathbf{B}_{1}$ with respect to the uniform topology as $n \rightarrow \infty$. Suppose that $w$ satisfies property \ref{RegPro1} in Definition \ref{DefinitionRegular}. Then, for every $t \in [0,1]$, $d_{t}^{\rm br}(w_{n}) \rightarrow d_{t}^{\rm br}(w)$, as $n \rightarrow \infty$. Furthermore, for every $t \in [0,1] \setminus ((0,1) \cap J(g^{\rm br}(w)))$, $g_{t}^{\rm br}(w_{n}) \rightarrow g_{t}^{\rm br}(w)$, as $n \rightarrow \infty$. 
\end{lemma}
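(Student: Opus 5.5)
The plan is to reduce both convergences to the behaviour of the zero set $Z(v) \coloneqq \{s: m^{\rm br}_s(v) = v(s)\}$ near the fixed time $t$. The approach rests on two ingredients: uniform convergence of the reflected processes $m^{\rm br}(w_n) - w_n$ to $m^{\rm br}(w) - w$, and a separate argument showing that zeros of the limit are approximated by zeros of $w_n$. Uniqueness of the rational one-sided maximizers (property \ref{RegPro1}) is what supplies the second ingredient.

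\emph{Step 1: uniform convergence of the reflected processes.} By Lemma \ref{lemma6}, $\rho(w_n)\to\rho(w)$, since \ref{RegPro1} with a rational $t>\rho$ forces $w\in\mathbf{B}_1^\ast$; see Remark \ref{Remark4}. Running maxima $\max_{[0,t]}$ and running maxima from the right $\max_{[t,1]}$ are $1$-Lipschitz in the sup norm. On $[0,\rho]$ and $[\rho,1]$ we have $m^{\rm br}_t(w)=\max_{[0,t]}w$ and $m^{\rm br}_t(w)=\max_{[t,1]}w$ respectively, and the two definitions agree near $\rho$ up to an error controlled by $|\rho(w_n)-\rho(w)|$ and the modulus of continuity of $w$. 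It follows that $e_n\coloneqq m^{\rm br}(w_n)-w_n \to e\coloneqq m^{\rm br}(w)-w$ uniformly.

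\emph{Step 2: zeros of $e$ are approximated by zeros of $e_n$.} This is the main obstacle, because uniform convergence alone only gives the inclusion $\limsup Z(w_n)\subset Z(w)$. It does not exclude $w_n$ having zero-free stretches that $w$ lacks. I would prove the following claim: if $s\in Z(w)$ and $\varepsilon>0$, then for large $n$ there is a zero of $e_n$ in $(s-\varepsilon,s+\varepsilon)$. Suppose first that $s<\rho$ and $s\in Z(w)$, so that $w(s)=\max_{[0,s]}w$. Pick a rational $q\in(s,s+\varepsilon)$. Then $\max_{[0,q]}w$ is attained at a unique point $\vec\rho_q$, and $\vec\rho_q\ge s$ because $w(s)$ is the running maximum up to $s$. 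Since $\vec\rho_q$ is unique, the argument of Lemma \ref{lemma6} applied on $[0,q]$ shows that the argmax of $w_n$ on $[0,q]$ converges to $\vec\rho_q\in[s,q]$. That argmax is a zero of $e_n$ for large $n$, because it lies before $\rho(w_n)$. The case $s>\rho$ is symmetric, using $\cev\rho_q$ for a rational $q\in(s-\varepsilon,s)$. The case $s=\rho$ follows from $\rho(w_n)\in Z(w_n)$.

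\emph{Step 3: convergence of $d_t$.} Let $d=d^{\rm br}_t(w)$.
\begin{enumerate}[label=(\alph*)]
\item For the upper bound: if $d>t$, then $e>0$ on $(t,d)$. Assume first that $d<1$. Zeros of $e$ accumulate at $d$ from the right, or $d$ itself is approximated in the sense of Step 2 by zeros of $e_n$ lying in $(t,d+\varepsilon)$. This gives $\limsup d_t(w_n)\le d$. If $d=1$, the bound is trivial.
\item For the lower bound: on $[t+\delta,d-\delta]$ we have $e\ge c>0$, so $e_n>0$ there for large $n$. Hence $\liminf d_t(w_n)\ge d$, unless $e_n$ has zeros in $(t,t+\delta)$.
\item The boundary case $d=t$ needs only the upper bound. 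In the case $d>t$, any zeros of $e_n$ in $(t,t+\delta)$ must disappear as $n\to\infty$. For $t<\rho$, I would argue that the running max of $w_n$ can only be renewed inside $(t,t+\delta)$ if $\max_{[0,t+\delta]}w_n\to\max_{[0,t+\delta]}w$, which exceeds $w$ on that window by a positive margin. This should be handled by the same Lipschitz estimates, with the exceptional possibility $e(t)=0$ treated as the case $g_t=t$.
\end{enumerate}

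\emph{Step 4: convergence of $g_t$.} The argument for $g_t$ is the mirror image of Step 3, approximating $g=g^{\rm br}_t(w)$ from the left by zeros of $e_n$ using Step 2. The only failure occurs when $e(t)=0$ but $t$ is the right end of an excursion interval that does not include it, that is, when $g$ jumps at $t$. At such a point the zeros of $e_n$ near $t$ may fall slightly to the right of $t$, and $g_t(w_n)$ then tends to the left endpoint rather than to $t$, or conversely. Excluding $(0,1)\cap J(g^{\rm br}(w))$ removes exactly this case. At continuity points, either $g_t=t$ is approached by zeros from the left, or $e>0$ on $(g,t]$ and the margin argument of Step 3 applies.
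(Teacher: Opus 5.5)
\paragraph{The gap.} Your Steps 1, 2, 3(a), 3(b) and 4 are sound. Step 3(c), however, fails in the one case it leaves open: $e(t)=0<d-t$, i.e.\ $t$ is the \emph{left} endpoint of an excursion interval $(t,d)$ of $m^{\rm br}(w)-w$.

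In that case your margin argument has nothing to work with. For $t<\rho$ one has $\max_{[0,t+\delta]}w=w(t)$, and $w(t)-w(s)\to 0$ as $s\downarrow t$. Knowing $g^{\rm br}_t(w)=t$ gives no control on $d^{\rm br}_t(w_n)$.

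No argument can close this case, because the claim is false there. Let $w_n=w+\phi_n$, where $\phi_n\ge 0$ is a tent supported in $[t,t+\tfrac1n]$ with peak at $t+\tfrac1{2n}$ of height $h_n=w(t)-w(t+\tfrac1{2n})+\tfrac1n\to 0$. Then:
\begin{itemize}
\item $w_n\to w$ uniformly, and $w_n=w$ on $[0,t]$;
\item $\rho(w_n)=\rho$ for large $n$, since $w<w(t)<w(\rho)$ on $(t,d)$ and $h_n\to0$;
\item $w_n(t+\tfrac1{2n})=w(t)+\tfrac1n>w(t)=\max_{[0,t]}w_n$, so a maximiser of $w_n$ on $[0,t+\tfrac1{2n}]$ lies in $(t,t+\tfrac1{2n}]$ and is a zero of $m^{\rm br}(w_n)-w_n$.
\end{itemize}
Hence $d^{\rm br}_t(w_n)\le t+\tfrac1{2n}\to t<d=d^{\rm br}_t(w)$.

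The case $t>\rho$ is the mirror image. Such left endpoints exist for almost every Brownian bridge path, and these paths satisfy \ref{RegPro1}. So what your argument proves, and what is actually true, is convergence of $d^{\rm br}_t$ for every $t$ outside the countable set of left endpoints of excursion intervals.

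\paragraph{Comparison with the paper.} The paper's proof has exactly the same hole. In the $\liminf$ step for $t\in[0,\rho)$ it asserts $m^{\rm br}_t(w)-\max_{[t,d^{\rm br}_t-\varepsilon]}w>0$, but this quantity equals $0$ precisely when $w(t)=m^{\rm br}_t(w)$. Its argument for $g_t$ in the case $g^{\rm br}_t\le t<d^{\rm br}_t$ also relies on $d^{\rm br}_t(w_n)\to d^{\rm br}_t(w)$ at such $t$. The $g$-conclusion there is nevertheless true, as your Step 4 shows, because such a $t$ is approached by zeros from the left.

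The damage is contained. Lemma \ref{lemma10} only needs convergence of $r^{\rm br}_t(w_n)$ for Lebesgue-a.e.\ $t$, so a countable exceptional set is harmless.

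For the corrected statement (exclude left endpoints for $d$, right endpoints for $g$), your route is a genuine improvement. Step 1 gives $\limsup_n Z(w_n)\subset Z(w)$ and Step 2 gives $Z(w)\subset\liminf_n Z(w_n)$, i.e.\ Kuratowski convergence of the zero sets. Both endpoint statements then follow from one order-theoretic argument, symmetric in $g$ and $d$. The paper instead does a case analysis using argmax convergence at rational times, and transfers to general $t$ through a rational $s\in(t,d^{\rm br}_t)$.
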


\begin{proof}
To simplify the notation, for $n \geq 1$ and $t \in [0,1]$, we set $g_{t,n}^{\rm br} = g_{t}^{\rm br}(w_{n})$ and $d_{t,n}^{\rm br} = d_{t}^{\rm br}(w_{n})$. We also write $g_{t}^{\rm br} = g_{t}^{\rm br}(w)$ and $d_{t}^{\rm br} = d_{t}^{\rm br}(w)$. Since $w$ satisfies property \ref{RegPro1} of Definition \ref{DefinitionRegular}, it follows that $w \in \mathbf{B}_1^{\ast}$. Then, let $\rho = \rho(w)$ be the unique location of its maximum on the interval $[0,1]$. 

{\bf Converge of the right endpoints}.  If $t = 1$, then $d_{1,n}^{\rm br}=1 \rightarrow d_{1}^{\rm br}=1$, as $n \rightarrow \infty$. Suppose that $t \in [0,  \rho]$. Indeed, since $t \in [0,  \rho]$, we necessarily must have $d_{t}^{\rm br} \in [t, \rho]$ (due to the uniqueness of the location of the maximum of $w$). The case $t \in [\rho, 1]$ follows similarly, and we omit the details.

Let us consider the case $t = \rho$. Note that we must necessarily have $\rho <1$. Note that $d_{t}^{\rm br} = \rho$ and $m^{\rm br}_{\rho}(w) = w(d_{t}^{\rm br}) > w(s)$ for any $s \in [\rho,1]$ such that $s \neq d_{t}^{\rm br}$. Then, for any $\varepsilon \in (0, 1-\rho)$. By our convergence assumption, $\max_{s \in [\rho + \varepsilon, 1]} w_{n}(s) - w_{n}(\rho)  \rightarrow \max_{s \in [\rho + \varepsilon, 1]} w(s) - w(\rho) < 0$, as $n \rightarrow \infty$. Therefore, the maximum of $w_{n}$ on $[\rho,1]$ is achieved in $[\rho ,\rho + \varepsilon]$ for large enough $n$, that is, $d_{t,n}^{\rm br} \in [\rho ,\rho + \varepsilon]$ for large enough $n$. The arbitrariness of $\varepsilon$ implies that $d_{t,n}^{\rm br} \rightarrow \rho = d_{t}^{\rm br}$, as $n \rightarrow \infty$.

We now suppose that $t \in [0,  \rho)$ and thus, by the uniqueness of $\rho$, we have that $d_{t}^{\rm br} \in [t, \rho)$.  First, we prove that $\liminf_{n \rightarrow \infty} d_{t,n}^{\rm br} \geq d_{t}^{\rm br}$. In the case when $t = d_{t}^{\rm br}$, this is immediate since $d_{t,n}^{\rm br} \geq t = d_{t}^{\rm br}$. We consider the case $d_{t}^{\rm br} \in (t, \rho)$. Note that for any $\varepsilon \in (0, d_{t}^{\rm br}-t)$, we have that, by the definition of $d_{t}^{\rm br}$ (see \eqref{eq42}), $m_{t}^{\rm br}(w_{n}) - \max_{s \in [t, d_{t}^{\rm br} -\varepsilon]} w_{n}(s)   \rightarrow m_{t}^{\rm br}(w) - \max_{s \in [t, d_{t}^{\rm br} -\varepsilon]} w(s) > 0$, as $n \rightarrow \infty$. We deduce that $d_{t,n}^{\rm br} \geq d_{t}^{\rm br} - \varepsilon$, for $n$ large enough, that is, $\liminf_{n \rightarrow \infty} d_{t,n}^{\rm br} \geq d_{t}^{\rm br}$. 

We now prove that $\limsup_{n \rightarrow \infty} d_{t,n}^{\rm br} \leq d_{t}^{\rm br}$. Recall that  $d_{t}^{\rm br} \in [t, \rho)$. If $t < d_{t}^{\rm br}$, then  \ref{RegPro1} in Definition \ref{DefinitionRegular} implies that, for every $\varepsilon \in (0, \rho- d_{t}^{\rm br}]$, $m_{d_{t}^{\rm br} + \varepsilon}^{\rm br}(w) >m_{d_{t}^{\rm br}}^{\rm br}(w) = m_{t}^{\rm br}(w)$. On the other hand, if there exists $\varepsilon \in (0, \rho- d_{t}^{\rm br})$ such that  $d_{t,n}^{\rm br} >  d_{t}^{\rm br} + \varepsilon$ for large $n$ then $m_{t}^{\rm br}(w_{n}) = m_{d_{t,n}^{\rm br}}^{\rm br}(w_{n})$, and so we get a contradiction
\begin{align}
m_{t}^{\rm br}(w) = \lim_{n \rightarrow \infty} m_{t}^{\rm br}(w_{n}) = \lim_{n \rightarrow \infty}  m_{d_{t,n}^{\rm br}}^{\rm br}(w_{n}) \geq \lim_{n \rightarrow \infty} m_{d_{t}^{\rm br} + \varepsilon}^{\rm br}(w_{n}) = m_{d_{t}^{\rm br} + \varepsilon}^{\rm br}(w) > m_{t}^{\rm br}(w).
\end{align}
\noindent We conclude that $\limsup_{n \rightarrow \infty} d_{t,n}^{\rm br} \leq d_{t}^{\rm br}$. 

If $t = d_{t}^{\rm br}$, then $d_{t}^{\rm br} = g_{t}^{\rm br}$ and $m_{d_{t}^{\rm br}}^{\rm br}(w) = m_{t}^{\rm br}(w)$. Furthermore, two cases may arise:
\begin{enumerate}[label=(\textbf{\alph*})]
  \item \label{CaseIRight} There exists $\delta \in (0, \rho- t]$ such that $w(s) =  m_{t}^{\rm br}(w)$, for all $s \in [t, t+\delta]$;
  \item \label{CaseIIRight} There exists $\delta \in (0, \rho- t]$ such that $w(s) >  m_{t}^{\rm br}(w)$, for all $s \in (t, t+\delta]$.
\end{enumerate}
\noindent Case \ref{CaseIRight} is ruled out by \ref{RegPro1} of Definition \ref{DefinitionRegular}. In case \ref{CaseIIRight}, $\limsup_{n \rightarrow \infty} d_{t,n}^{\rm br} \leq d_{t}^{\rm br}$ follows by the same argument used for $t < d_{t}^{\rm br}$. This finished our proof of the converge of the right endpoints. 

{\bf Converge of the left endpoints}. If $t = 0$, then $g_{0,n}^{\rm br}=0 \rightarrow g_{0}^{\rm br}=0$, as $n \rightarrow \infty$. Suppose that $t \in (0, \rho] \cap \mathbb{Q}$. Then, by \ref{RegPro1} in Definition \ref{DefinitionRegular}, $g_{t}^{\rm br} = \vec{\rho}_{t} \in [0, t]$ and $m^{\rm br}_{t}(w) = w(g_{t}^{\rm br}) > w(s)$ for any $s \in [0,t]$ such that $s \neq g_{t}^{\rm br}$. Suppose that $g_{t}^{\rm br} < t$. Then, for any $\varepsilon \in (0, \min(g_{t}^{\rm br}, t- g_{t}^{\rm br}))$, let $A_{t, \varepsilon} \coloneqq [0,g_{t}^{\rm br} - \varepsilon] \cup [g_{t}^{\rm br}+\varepsilon, t]$. By our convergence assumption, $\max_{s \in A_{t, \varepsilon}} w_{n}(s) - w_{n}(g_{t}^{\rm br})  \rightarrow \max_{s \in A_{t, \varepsilon}} w(s) - w(g_{t}^{\rm br}) < 0$, as $n \rightarrow \infty$. Therefore, the maximum of $w_{n}$ on $[0,t]$ is achieved in $[g_{t}^{\rm br} - \varepsilon,  g_{t}^{\rm br} + \varepsilon]$ for large enough $n$, that is, $g_{t,n}^{\rm br} \in [g_{t}^{\rm br} - \varepsilon,  g_{t}^{\rm br} + \varepsilon]$ for large enough $n$. Then, $g_{t,n}^{\rm br} \rightarrow g_{t}^{\rm br}$, as $n \rightarrow \infty$. If $t = g_{t}^{\rm br}$, the proof follows similarly by considering $\varepsilon \in (0, g_{t}^{\rm br})$ and letting $A_{t, \varepsilon} \coloneqq [0,g_{t}^{\rm br} - \varepsilon]$. Finally, the case $t \in [\rho, 1] \cap \mathbb{Q}$ follows along the same lines, and the details are left to the reader.

We now prove the convergence of the left endpoints for all $t \in (0,\rho]$. Again, the case $t \in [\rho, 1)$ follows similarly, and we omit the details. Suppose that $g_{t}^{\rm br}\leq t< d_{t}^{\rm br}$. Then, for any $s \in (t, d_{t}^{\rm br}) \cap \mathbb{Q}$, we have that $g_{s,n}^{\rm br} \rightarrow g_{s}^{\rm br}=g_{t}^{\rm br}$ and $d_{s,n}^{\rm br} \rightarrow d_{s}^{\rm br}=d_{t}^{\rm br}$, as $n \rightarrow \infty$. Since $d_{t,n}^{\rm br}\rightarrow d_{t}^{\rm br}$, as $n \rightarrow \infty$, and $g_{t}^{\rm br}<s$, we have that $g_{t,n}^{\rm br}  \leq g_{s,n}^{\rm br} \leq s < d_{t,n}^{\rm br}$, for large enough $n$. Thus, $g_{t,n}^{\rm br} = g_{s,n}^{\rm br} \rightarrow g_{t}^{\rm br}$, as $n \rightarrow \infty$.

Finally, if $g_{t}^{\rm br}=d_{t}^{\rm br}$, then $t = g_{t}^{\rm br}$ and $m_{t}^{\rm br}(w) = w(t)$. Furthermore, two cases may arise:
\begin{enumerate}[label=(\textbf{\alph*})]
\item \label{CaseIILeft} There exists $\delta \in (0, t]$ such that $w(s) = m_{t}^{\rm br}(w)$, for all $s \in [t-\delta, t]$;
    \item \label{CaseIIILeft} There exists $\delta \in (0, t]$ such that $w(s) <  m_{t}^{\rm br}(w)$, for all $s \in [t-\delta, t)$.
\end{enumerate}
\noindent Case \ref{CaseIILeft} is ruled out by \ref{RegPro1} of Definition \ref{DefinitionRegular}. Thus, we only need to consider case \ref{CaseIIILeft}. If $t$ is the unique point in $[0, t]$ such that $m_{t}^{\rm br}(w) = w(t)$ (this also includes the case $t = \rho$), then our claim follows by the same argument use for rational $t$. Suppose there exists $t^{\prime} \in [0, t)$ such that $w(t) = m_{t}^{\rm br}(w) = w(t^{\prime})$. Indeed, by \ref{RegPro1} of Definition \ref{DefinitionRegular}, $t^{\prime}$ must be unique. However, the above implies that $t \in J(g^{\rm br}(w))$, which is ruled out by our assumption. 

This finished our proof of the converge of the left endpoints. 
\end{proof}

\begin{proof}[Proof of Lemma \ref{lemma10}]
Let $(w_{n})_{n \geq 1}$ be a sequence of functions in $\mathbf{B}_1$ that converges to $w\in \mathbf{B}^{{\rm reg}}_1$ with respect to the uniform topology as $n \rightarrow \infty$. Suppose momentarily that we have proven:
\begin{enumerate}[label=(\textbf{\roman*})]
\item \label{Conv1}  $(m^{\rm br}(w_n))_{n \geq 1}$ converges to $m^{\rm br}(w)$ uniformly on $[0,1]$, as $n \rightarrow \infty$. In particular, $(m^{\rm br}(w_n) - w_{n})_{n \geq 1}$ converges to $m^{\rm br}(w)-w$ uniformly on $[0,1]$, as $n \rightarrow \infty$;

\item \label{Conv2} For every $t \in [0,1] \setminus ((0,1) \cap J(g^{\rm br}(w)))$, $\mathbf{1}_{\{r^{\rm br}_{t}(w_{n})<m^{\rm br}_{t}(w_{n})\}} \rightarrow \mathbf{1}_{\{r^{\rm br}_{t}(w)<m^{\rm br}_{t}(w)\}}$, as $n \rightarrow \infty$;

\item \label{Conv3} 
$(u^{\rm br}(w_{n}))_{n \geq 1}$ converges to $u^{\rm  br}(w)$ uniformly on $[0,1]$, as $n \rightarrow \infty$;

\item \label{Conv4} For $n \geq 1$ and $t \in [0,1]$, define $u_{1,n}^{\rm br} \coloneqq u_{1}^{\rm br}(w_{n})$ and $u_{1}^{\rm br} \coloneqq u_{1}^{\rm br}(w)$. To simplify the notation, we also write $h_{t}^{(n)} \coloneqq \alpha^{\rm br}_{u_{1,n}^{\rm br} t}(w_{n})$ and $h_{t} \coloneqq \alpha^{\rm br}_{u_{1}^{\rm br} t}(w)$. Then, $(h^{(n)})_{n \geq 1}$ converges to $h$ in $\mathbf{D}([0,1], \mathbb{R})$, where $\mathbf{D}([0,1], \mathbb{R})$ is the space of real-valued c\`adl\`ag functions on $[0,1]$ equipped with the Skorohod ${\rm J}_{1}$ topology; see e.g., \cite[Chapter 3]{Bi1999} or \cite[Chapter VI]{Jacod2003}. 
\end{enumerate} 

\noindent We show how \ref{Conv3} and \ref{Conv4} imply our claim. First, recall that, $u^{\rm br}_{1} >0$. We may therefore choose $N \in \mathbb{N}$ sufficiently large such that $u^{\rm br}_{1,n}>0$ for all $n \geq N$. Then, for $n \geq N$,
\begin{align}
\mathcal{G}^{\rm br}(w_{n}) = ( (u^{\rm br}_{1,n})^{-1/2} w_{n}(h_{t}^{(n)}), t \in [0,1]) \quad \text{and} \quad
\mathcal{G}^{\rm br}(w) = ( (u^{\rm br}_{1})^{-1/2} w(h_{t}), t \in [0,1]).
\end{align}
\noindent Therefore our claim follows from \ref{Conv3}, \ref{Conv4} and \cite[Theorem 3.1]{Whitt1980} (see also the proof of the Lemma in \cite[pp.\ 151-152]{Bi1999}). 

We now proceed to prove \ref{Conv1} through \ref{Conv4}. Clearly, \ref{Conv1} follows from the uniform convergence of $(w_{n})_{n \geq 1}$ towards $w$. Recall that we have assumed that $w$ satisfies \ref{RegPro1} in Definition \ref{DefinitionRegular}.  Then, \ref{Conv1}, Lemma \ref{lemma5}, and the continuity of $w$ imply that for every $t \in [0,1]$, $r^{\rm br}_{t}(w_{n}) \rightarrow r^{\rm br}_{t}(w)$, as $n \rightarrow \infty$. (Recall that $J(g^{\rm br}(w))$ has Lebesgue measure $0$.) This, together with the assumption $w \in \mathbf{B}^{{\rm reg}}_1$ (specifically \ref{RegPro2} in Definition \ref{DefinitionRegular}; see also Remark \ref{remark3}), implies \ref{Conv3}.

Finally, we prove \ref{Conv4}. For $f \in \mathbf{D}([0,1], \mathbb{R})$, we set $\Delta f_{t} \coloneqq f_{t}-f_{t-}$, for $t \in [0,1]$, with the convention $f_{0} = f_{0-}$. Let $J(h) = \{ t \in (0,1]: |\Delta h_{t}| >0 \}$ be the set of all jump times of $h$. Recall that $J(h)$ is at most countable and that $[0,1] \setminus J(h)$ is dense in $[0,1]$. Then, \ref{Conv4} follows from \cite[Theorem VI.2.15]{Jacod2003}, provided that we prove that for all $t \in [0,1] \setminus J(h)$,
\begin{align} \label{eq39}
h^{(n)}_{t} \rightarrow h_{t}, \quad \text{as} \, \, n \rightarrow \infty,
\end{align}
\noindent and
\begin{align} \label{eq40}
\sum_{0 < s \leq t} (\Delta h^{(n)}_{t})^{2} \rightarrow \sum_{0 < s \leq t} (\Delta h_{t})^{2}, \quad \text{as} \, \, n \rightarrow \infty.
\end{align}

\noindent Let us start with the proof of \eqref{eq39}. Fix $t \in [0,1] \setminus J(h)$. Observe from \eqref{eq38} that $u^{\rm br}(w)$ is a non-decreasing, continuous function. Note also from \eqref{eq41} that $h$ is strictly increasing. 

If $h_{t} = 1$, then $\limsup_{t \rightarrow \infty} h_{t}^{(n)} \leq h_{t}$. Next, suppose that $h_{t} \in [0,1)$. Then, for any $\varepsilon \in (0, 1-h_{t})$, there exists $s \in [h_{t}, h_{t} + \varepsilon]$ such that $u^{\rm br}_{s}(w) > u^{\rm br}_{1}(w) t$. But then, \ref{Conv3} implies that $u^{\rm br}_{s}(w_{n}) > u^{\rm br}_{1}(w_{n}) t$ for large enough $n$. Thus, we deduce that  $\limsup_{n \rightarrow \infty} h_{t}^{(n)} \leq h_{t}$. 

If $h_{t} = 0$, then $\liminf_{n \rightarrow \infty} h_{t}^{(n)} \geq h_{t}$, and \eqref{eq39} follows by the preceding paragraph. Otherwise, suppose that $h_{t} \in (0,1]$. Then, since $t \in [0,1] \setminus J(h)$, we have that for any $\varepsilon \in (0, h_{t})$ and for any $s\in [0, h_{t} - \varepsilon]$, $u^{\rm br}_{s}(w) < u^{\rm br}_{1}(w) t$. Hence, there exists $\eta > 0$ such that $|u^{\rm br}_{1}(w) t - u^{\rm br}_{s}(w)| > \eta$ for all $s\in [0, h_{t} - \delta]$. (Recall that $u^{\rm br}(w)$ is non-decreasing and continuous.) But then, it follows from \ref{Conv3} that there exists $N \in \mathbb{N}$ such that  $|u^{\rm br}_{1}(w_{n}) t - u^{\rm br}_{s}(w_{n})| > \eta$ for all $s\in [0, h_{t} - \varepsilon]$ and $n \geq N$. (Note also that $u^{\rm br}(w_{n})$ is non-decreasing and continuous.) This implies that $h_{t}^{(n)} \geq h_{t} - \varepsilon$ for $n \geq N$ and we deduce that $\liminf_{n \rightarrow \infty} h_{t}^{(n)} \geq h_{t}$. Therefore, \eqref{eq39} follows by the preceding paragraph.

Next, we prove \eqref{eq40}. Fix $t \in [0,1] \setminus J(h)$. Observe that,
\begin{align}
\sum_{0 < s \leq t} (\Delta h^{(n)}_{t})^{2} = 2 \int_{0}^{h^{(n)}_{t}} s \mathbf{1}_{\{r^{\rm br}_{s}(w_{n}) \geq m^{\rm br}_{s}(w_{n})\}} {\rm d} s,
\end{align}
\noindent and 
\begin{align}
\sum_{0 < s \leq t} (\Delta h_{t})^{2} = 2 \int_{0}^{h_{t}} s \mathbf{1}_{\{r^{\rm br}_{s}(w) \geq m^{\rm br}_{s}(w)\}} {\rm d} s.
\end{align}
\noindent Then, \eqref{eq40} follows from \ref{Conv2} and \eqref{eq39}. 
\end{proof}

\begin{proof}[Proof of Proposition \ref{Pro2}]
By Lemma \ref{lemma10}, it suffices to show that $\mathbf{B}^{{{\rm reg}}}_1$ is a Borel subset of $\mathbf{C}([0,1], \mathbb{R})$.

For $a \in [0,1]$, let $\mathbf{B}_{[0,a]}^{\ast}$ denote the subset of functions of $\mathbf{B}_{1}$ whose restriction to $[0,a]$ has a unique absolute maximum, and let $\mathbf{B}_{[a,1]}^{\ast}$ denote the subset of functions of $\mathbf{B}_{1}$ whose restriction to $[a,1]$ has a unique absolute maximum.  A similar argument to the one used in the proof of Proposition \ref{Pro3} shows that $\mathbf{B}_{[0,a]}^{\ast}$ and $\mathbf{B}_{[a,1]}^{\ast}$ are Borel subsets of $\mathbf{C}([0,1], \mathbb{R})$. Thus, $\bigcap_{q \in [0,1] \cap \mathbb{Q}} (\mathbf{B}_{[0,q]}^{\ast} \cap \mathbf{B}_{[q,1]}^{\ast})$, 
which is the set of functions in $\mathbf{B}_{1}$ satisfying Definition \ref{DefinitionRegular} \ref{RegPro1}, is also a Borel subset of $\mathbf{C}([0,1], \mathbb{R})$. 

For $q_{1}, q_{2} \in [0,1]$ such that $q_{1} \leq q_{2}$, let $\mathbf{B}_{q_{1}, q_{2}} \coloneqq \{ w \in \mathbf{B}_{1}: \min_{q_{1} \leq s \leq q_{2}} w(s) \neq 0\}$. Define also $I \coloneqq \{ (a,b): a, b \in [0,1] \cap \mathbb{Q} \, \, \text{such that} \, \, a \leq b \, \, \text{and either} \, \, a \in (0,1) \, \,  \text{or} \, \, b \in (0,1)\}$. It is not difficult to prove that, for $q_{1}, q_{2} \in I$, $\{ w \in \mathbf{B}_{1}: \min_{q_{1} \leq s \leq q_{2}} w(s) = 0\}$, is a closed set (for example, by using a similar argument as in the proof of Proposition \ref{Pro3}). Thus, $\bigcap_{(q_{1}, q_{2})  \in I} \mathbf{B}_{q_{1}, q_{2}}$, 
which is the set of functions in $\mathbf{B}_{1}$ satisfying Definition \ref{DefinitionRegular} \ref{RegPro2}, is also a Borel subset of $\mathbf{C}([0,1], \mathbb{R})$.

Therefore, from the preceding two paragraphs, it follows that $\mathbf{B}^{ \rm reg}_1$ is a Borel subset of $\mathbf{C}([0,1], \mathbb{R})$.
\end{proof}

\section{Proof of Proposition \ref{Pro4}} \label{AProofofPro4}

\begin{proof}[Proof of Proposition \ref{Pro4}]
The first claim follows from Propositions \ref{Pro3}, Lemma \ref{lemma9}, Proposition \ref{Pro2}. To establish the second claim, it suffices to show that $\mathbf{M}^{{{\rm reg}}}_1$ is a Borel subset of $\mathbf{C}([0,1], \mathbb{R})$. Observe that, by Remark \ref{Remark9}, $\mathbf{M}^{{\rm reg}}_1 \subset \mathbf{M}^{\ast}_1$, and recall $\mathbf{B}^{{\rm reg}}_1$ is a Borel subset of $\mathbf{C}([0,1], \mathbb{R})$ (see the proof of Proposition \ref{Pro2}). Then, by Definition \ref{DefinitionRegularII} and Proposition~\ref{Pro3}, $\mathbf M^{\mathrm{reg}}_1$ is a Borel subset of $\mathbf C([0,1],\mathbb R)$. 
\end{proof}
\end{appendices}

\paragraph{Acknowledgements.} 
The question studied in this paper was suggested to the second author by Jim Pitman
in a discussion. He emphasized his interest in the interval partition
generated by the excursion landscape, motivated by a visual analogy with
rice paddies. We thank him for this stimulating perspective.


\providecommand{\bysame}{\leavevmode\hbox to3em{\hrulefill}\thinspace}
\providecommand{\MR}{\relax\ifhmode\unskip\space\fi MR }
\providecommand{\MRhref}[2]{%
  \href{http://www.ams.org/mathscinet-getitem?mr=#1}{#2}
}
\providecommand{\href}[2]{#2}

\end{document}